\newenvironment{myitem}{
\refstepcounter{equation}
\\[1.0em]
$(\thesection.\arabic{equation})$ \quad
\begin{minipage}[t]{12.5cm}}
{\end{minipage}\\[1.0em]\noindent}
\newenvironment{myitem*}{
\\[1.0em]
$\qquad$\quad
\begin{minipage}[t]{12.5cm}}
{\end{minipage}\\[1.0em]\noindent}
\newenvironment{myitem1}{
\refstepcounter{equation}
\\[1.0em]
$(\thesection.\arabic{equation})$
\begin{minipage}[t]{13.5cm} \begin{itemize}}
{\end{itemize}\end{minipage}\\[1.0em]}
\def\RR{{\bf R}}
\def\ZZ{{\bf Z}}
\def\defarrow{\stackrel{\mathrm{def}}{\Longleftrightarrow}}
\def\bigmid {\ \left|{\Large \strut}\right.}
\def\11{{\bf 1}}
\def\00{{\bf 0}}
\numberwithin{equation}{section}
\newcommand{\val}{\mathop{\rm val}}
\newtheorem{Thm}{Theorem}[section]
\newtheorem{Prop}[Thm]{Proposition}
\newtheorem{Lem}[Thm]{Lemma}
\newtheorem{Cor}[Thm]{Corollary}
\theoremstyle{definition}
\title{On duality and fractionality of multicommodity flows\\ in directed networks}
\author{Hiroshi HIRAI \\
        Department of Mathematical Informatics,  \\
        Graduate School of Information Science and Technology,  \\
        University of Tokyo, Tokyo, 113-8656, Japan \\
        \texttt{\normalsize hirai@mist.i.u-tokyo.ac.jp}
        \\ \\
        Shungo KOICHI \\
        Department of Systems Design and Engineering,  \\
        Nanzan University, Seto 489-0863, Japan  \\
        \texttt{\normalsize shungo@nanzan-u.ac.jp}}
\date{June 2010\\March 2011 (revised)}
\begin{document}

\maketitle
\begin{abstract}
In this paper
we address a topological approach to 
multiflow (multicommodity flow) 
problems in directed networks. 
Given a terminal weight $\mu$, 
we define a metrized polyhedral complex, 
called the directed tight span $T_{\mu}$, 
and prove that 
the dual of $\mu$-weighted maximum multiflow problem 
reduces to a facility location problem on $T_{\mu}$.
Also, in case where the network is Eulerian, 
it further reduces to 
a facility location problem on 
the tropical polytope spanned by $\mu$.
By utilizing this duality, 
we establish the classifications of terminal weights 
admitting combinatorial min-max relation 
(i) for every network and (ii) for every Eulerian network.
Our result includes 
Lomonosov-Frank theorem for directed free multiflows 
and Ibaraki-Karzanov-Nagamochi's directed multiflow locking theorem
as special cases.

\end{abstract}

Keywords: multicommodity flows, metrics, min-max theorems, 
facility locations

\section{Introduction}
A {\em network} $(G,S,c)$
is a triple of a directed graph $G =(VG,EG)$, 
a specified set $S \subseteq VG$ of nodes called {\em terminals}, 
and a nonnegative integer-valued edge-capacity $c: EG \to \ZZ_+$.
An {\em $S$-path} is a (directed) path 
joining distinct terminals.
A {\em multiflow} ({\em multicommodity flow}) 
is a pair $({\cal P},\lambda)$
of a set ${\cal P}$ of $S$-paths and 
a nonnegative flow-value function $\lambda:{\cal P} \to \RR_+$ 
satisfying the capacity constraint:
$
\sum \{ \lambda(P) \mid P \in {\cal P}, \mbox{$P$ contains $e$} \} \leq c(e)
$ for $e \in EG$.
Given a nonnegative terminal weight $\mu: S \times S \to \RR_+$, 
the flow-value $\val(\mu,f)$ of multiflow $f = ({\cal P},\lambda)$ 
is defined by $\sum \{ \lambda(P) \mu(s_P,t_P) \mid P \in {\cal P}\}$, 
where $s_P$ and $t_P$ denote 
the start node and 
the end node of $P$, respectively.
Then the 
{\em $\mu$-weighted maximum multiflow problem} 
is formulated as:
\begin{description}
\item[$\mu$-MFP:] \quad \quad  
          Maximize $\val(\mu,f)$ over all multiflows $f$ in $(G,S,c)$.
\end{description}

For a special terminal weight $\mu$, 
the $\mu$-MFP has a nice integrality property.
For example, consider
$S = \{s,t\}$ and 
$(\mu(s,t), \mu(t,s)) = (1,0)$.
Then 
the max-flow min-cut theorem says 
that the maximum flow value is equal 
to the minimum $(s,t)$-cut value 
and there always exists 
an {\em integral} maximum flow 
(maximum flow $({\cal P},\lambda)$ for which $\lambda$ is integer-valued).
Consider the case 
where $\mu(s,t) = 1$ for all distinct $s,t \in S$, 
and network is Eulerian.
Lomonosov~\cite{Lom78} 
and Frank~\cite{Frank89}
independently proved 
that the maximum flow value
is equal to the sum of 
the minimum $(s, S \setminus s)$-cut value 
over $s \in S$ and 
there exists an integral maximum multiflow.

The goal of this paper 
is to classify 
weight functions $\mu: S \times S \to \RR_{+}$ 
for which $\mu$-MFP possesses 
such a combinatorial min-max relation.
This
classification problem, 
called the {\em fractionality problem},
was raised by Karzanov for 
the undirected $\mu$-MFP 
($G$ is undirected and $\mu$ is symmetric); 
see \cite{Kar89}.
It is well-known that the LP-dual to $\mu$-MFP is 
a linear optimization over metrics on node set $VG$.
In 90's, 
Karzanov~\cite{Kar98a, Kar98b} found
a remarkable fact that 
all possible candidates of optimal metrics
are embedded into 
{\em a metric space on 
a polyhedral complex associated with $\mu$}.
This polyhedral complex is known as 
the {\em tight span}, 
which was earlier introduced 
by Isbell~\cite{Isbell64} and Dress~\cite{Dress84} 
independently.
Then the LP-dual 
reduces to a 
{\em facility location problem} 
on the tight span.
Furthermore, if the tight span 
has a sufficiently nice geometry 
(dimension at most two), 
then one can obtain 
a combinatorial min-max relation 
from its shape.
Otherwise (dimension at least three), 
one can conclude that $\mu$-MFP has no 
such a combinatorial duality relation.
Recently, 
this beautiful theory 
was further extended
by the first author, 
and the fractionality 
problem for the undirected $\mu$-MFP
was roughly settled~\cite{HH09, HH09folder, HH09bounded}.

Our previous paper~\cite{HK10distance} 
started to develop an analogous duality theory 
for directed multiflows.
In the directed case, the LP-dual is 
a linear optimization 
over {\em possibly asymmetric} metrics, 
which we call {\em directed metrics}.
We introduced 
a directed version $T_{\mu}$ 
of the tight span ({\em directed tight span}).
In the case of metric $\mu$-MFP ($\mu$ is a directed metric),  
we showed that the LP-dual
reduces to a facility location 
problem on $T_{\mu}$; see \cite[Section 4]{HK10distance}.
Moreover, in the case where a network is Eulerian, 
this LP-dual further reduces to 
a facility location problem 
on the {\em tropical polytope} 
$\bar Q_{\mu}$ spanned by $\mu$, 
which was earlier introduced 
by Develin-Sturmfels~\cite{DS04}  
in the context of the tropical geometry.

The main contribution of this paper
extends this duality theory 
for possibly nonmetric weights 
and solves the fractionality problems 
(i) for $\mu$-MFP and (ii) for Eulerian $\mu$-MFP 
(which is $\mu$-MFP on an Eulerian network).
In Section~\ref{sec:duality}, 
we establish a general duality relation 
for $\mu$-MFP with 
a possibly nonmetric weight $\mu$.
As well as the metric case, 
the LP-dual
reduces 
to a facility location on 
the directed tight span $T_{\mu}$ 
(Theorem~\ref{thm:T-dual}).
However, 
in Eulerian case, 
we need a more careful treatment for
the nonmetricity of $\mu$.  
We newly introduce 
the {\em slimmed tropical polytope} 
$\bar Q_{\mu}^{slim}$, 
which is a certain subset 
of the tropical polytope 
and coincides with it if $\mu$ is a metric.
Then we prove that 
the LP-dual to an Eulerian $\mu$-MFP
reduces to a facility location 
on $\bar Q_{\mu}^{slim}$ 
(Theorem~\ref{thm:R-dual}).
In Section~\ref{sec:integrality},
we show the integrality theorem
(Theorem~\ref{thm:integrality}) that 
(i) if $\dim T_\mu \leq 1$,
then every $\mu$-MFP
has an integral optimal multiflow, 
and (ii) if $\dim \bar Q_\mu^{slim} \leq 1$ 
then 
every Eulerian $\mu$-MFP
has an integral optimal multiflow.
We remark that
the former result can be proved by a reduction to 
the minimum cost circulation.
The second result 
includes Lomonosov-Frank 
theorem for directed free multiflows~\cite{Lom78, Frank89} 
and Ibaraki-Karzanov-Nagamochi's 
directed version of the multiflow locking theorem~\cite{IKN98} 
as special cases.
We give 
a combinatorial characterization of 
weights $\mu$ with $\dim \bar Q_{\mu}^{slim} \leq 1$
in terms of {\em oriented trees} 
(Theorem~\ref{thm:dim1}), 
and explain 
a relationship among these results.
In Section~\ref{sec:unbounded},
we show  
that
the one-dimensionality of 
the directed tight span and 
the slimmed tropical polytope
are best possible for the integrality.
Theorem~\ref{thm:unbounded} says that 
if $\dim T_{\mu} \geq 2$, then there is no positive integer $k$ 
such that every $\mu$-MFP 
has a $1/k$-integral optimal multiflow, and
that if $\dim \bar Q_{\mu}^{slim} \geq 2$, 
then there is no positive integer $k$ such that 
every Eulerian $\mu$-MFP
has a $1/k$-integral optimal multiflow.

\paragraph{Notation.}
The sets of real numbers and nonnegative real numbers
are denoted  by $\RR$ and $\RR_{+}$, respectively.
The set of functions
from a set $X$ to $\RR$ (resp. $\RR_+$) 
is denoted by $\RR^{X}$ (resp. $\RR_+^X$).
For a subset $Y \subseteq X$,
the characteristic function $\11_{Y} \in \RR^{X}$
is defined by
$\11_Y(x) = 1$ for $x \in Y$
and $\11_{Y}(x) = 0$ for $x \notin Y$.
We particularly
denote by $\11$
the all-one function in $\RR^{X}$.
For $p,q \in \RR^{X}$, $p \leq q$ means 
$p(x) \leq q(x)$ for each $x \in X$, and $p < q$ means 
$p(x) < q(x)$ for each $x \in X$. 
For $p \in \RR^{X}$, $(p)_{+}$ is defined by
$((p)_{+})(x) = \max\{ p(x),0 \}$ for each $x \in X$.
For a set $P$ in $\RR^X$, 
a point $p$ in $P$ is said to be {\em minimal} 
if there is no other point $q \in P \setminus p$ with $q \leq p$.

For a set $S$, 
a nonnegative real-valued function $d$ 
on $S \times S$ having zero diagonals $d(s,s) = 0$ $(s \in S)$
is called a {\em directed distance}.
We regard a terminal weight $S \times S \rightarrow \RR_+$ 
as a directed distance.
A directed distance $d$ on a set $S$ is called a 
{\em directed metric} if 
it satisfies the triangle inequality 
$d(s,t) + d(t,u) \geq d(s,u)$ 
for every triple $s,t,u \in S$.
A {\em directed metric space} is
a pair $(S,\mu)$ of a set $S$ and a directed metric $d$ on $S$.
For a directed metric $d$ on $V$, 
and two subsets $A,B \subseteq V$, 
let $d(A,B)$ denote the minimum distance from $A$ to $B$:
\[
d(A,B) = \inf \{ d(x,y) \mid (x,y) \in A \times B\}.
\]
In our theory, the following directed metric 
$D_{\infty}^+$ on $\RR^X$ 
is particularly important: 
\[
D_{\infty}^+(p,q) = \|(q - p)_+ \|_{\infty} 
\quad ( = \max_{x \in X} ( q(x) - p(x) )_+  ) \quad (p,q \in \RR^X).
\]
We remark that $D_{\infty}^+(p,q) =0$ whenever $p \geq q$.

For a directed or undirected graph $G$, 
its node set and edge set 
are denoted by $VG$ and $EG$, respectively.
If directed, an edge with tail $x$ and head $y$ 
is denoted by $xy$.
If undirected, we do not distinguish $xy$ and $yx$.
In a network $(G,S,c)$, 
a non-terminal node is 
called an {\em inner node}.
For a node $x \in VG$, 
we say ``{\em $x$ fulfills the Eulerian condition}''
if the sum of the capacities $c(xy)$ over edges $xy$ leaving $x$
is equal to 
that over edges entering $x$.
A network $(G,S,c)$ is said to be {\em inner Eulerian}
if every inner node fulfills the Eulerian condition, 
and is said to be {\em totally Eulerian} 
if every node fulfills the Eulerian condition.

{\em 
A directed distance and directed metric is often simply called 
a distance and a metric, respectively.
 }

\section{Duality}\label{sec:duality}
Let $(G,S,c)$ be a network 
and let $\mu$ be a directed distance on $S$.
We denote by ${\rm MFP}^*(\mu; G,S,c)$ 
the optimal value of $\mu$-MFP for $(G,S,c)$.
The linear programing dual to $\mu$-MFP is given by
\begin{eqnarray}
\mbox{{\bf LPD}: \quad Minimize} && \sum_{xy \in EG} c(xy) d(x,y) \nonumber \\
\mbox{subject to} && \mbox{$d$ is a directed metric on $VG$}, \nonumber \\
 && d(s,t) \geq \mu(s,t) \quad (s,t \in S). \nonumber
\end{eqnarray}
We are going to represent LPD as a facility location problem 
on a metrized polyhedral complex 
associated with $\mu$.
Let $S^c$ and $S^r$ be copies of $S$.
For an element $s \in S$, 
the corresponding elements 
in $S^c$ and $S^r$ are denoted by $s^c$ and $s^r$,
respectively.
We denote $S^c \cup S^r$ by $S^{cr}$.
For a point $p \in \RR^{S^{cr}}$, 
the restrictions of $p$ to $S^c$ and $S^r$
are denoted by $p^c$ and $p^r$, respectively, i.e., $p = (p^c, p^r)$.
Consider the following unbounded polyhedron in 
$\RR^{S^{cr}}$:
\[
P_{\mu}  =  \{  p \in \RR^{S^{cr}}
\mid p(s^c) + p(t^r) \geq \mu(s,t) \; (s,t \in S) \}.
\]
Let $D_{\infty}$ 
be a directed metric on $\RR^{S^{cr}}$ defined as
\[
D_{\infty}(p,q) 
 =  \max\{ D_{\infty}^{+} (p^c,q^c),\; D_{\infty}^{+}(q^r, p^r) \}  
\quad (p,q \in \RR^{S^{cr}}). \nonumber 
\]
We endow $P_{\mu}$ 
and its subsets with this directed metric. 
For a subset $R$ in $P_{\mu}$, 
we denote by $(R)^+ = R^+$ 
the set of nonnegative points in $R$.
Also for $s \in S$ we denote by $(R)_s = R_s$ 
the set of points $p \in R$ with $p(s^c) + p(s^r) = \mu(s,s) = 0$; 
if $R \subseteq \RR^{S^{cr}}_+$ then $R_s$ is 
the set of points $p \in R$ with $p(s^c) = p(s^r) = 0$.

For a subset $R \subseteq P_{\mu}^+$, 
consider the following {\em facility location problem} on $R$:
\begin{eqnarray}
\mbox{{\bf FLP}: \quad  Minimize} 
&& \sum_{xy \in EG} c(xy) D_{\infty}(\rho(x), \rho(y)) \nonumber \\
\mbox{subject to} &&  \rho: VG \to R, \nonumber \\
       && \rho(s) \in  R_s \ (s \in S). \nonumber
\end{eqnarray}
Let ${\rm FLP}^*(R; G,S,c)$ denote the minimum value
of this problem.
%
Then the following weak/strong duality holds:
\begin{Lem}
For a network $(G,S,c)$ and a directed distance $\mu$ on $S$, we have
\begin{itemize}
\item[{\rm (1)}] ${\rm MFP}^{*}(\mu; G,S,c) \leq {\rm FLP}^*(R; G,S,c)$ 
                 for any subset $R \subseteq P_{\mu}^+$, and
\item[{\rm (2)}] ${\rm MFP}^{*}(\mu; G,S,c) = {\rm FLP}^*(P_{\mu}^+; G,S,c)$.
\end{itemize}
\end{Lem}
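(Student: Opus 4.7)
The plan is to reduce both parts to the classical LP duality between $\mu$-MFP and LPD, by constructing an explicit correspondence between FLP-feasible maps $\rho\colon VG \to P_\mu^+$ and LPD-feasible directed metrics $d$ on $VG$ that matches their objective values. Part (1) uses one direction of this correspondence (FLP to LPD) together with weak LP duality; part (2) uses both directions together with the strong LP duality ${\rm MFP}^*(\mu;G,S,c) = {\rm LPD}^*$.

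For (1), take any feasible $\rho$ on $R \subseteq P_\mu^+$ and set $d(x,y) := D_\infty(\rho(x),\rho(y))$. A short computation using $(a+b)_+ \le (a)_+ + (b)_+$ shows that $D_\infty$ satisfies the directed triangle inequality, so $d$ is a directed metric on $VG$. For $s,t \in S$, the membership $\rho(t) \in P_\mu$ gives $\rho(t)(s^c) + \rho(t)(t^r) \geq \mu(s,t)$, and $\rho(t) \in R_t$ forces $\rho(t)(t^r) = 0$, hence $\rho(t)(s^c) \geq \mu(s,t)$. Combined with $\rho(s)(s^c) = 0$, this yields
\[
d(s,t) \;\geq\; D_\infty^+(\rho(s)^c, \rho(t)^c) \;\geq\; \rho(t)(s^c) - \rho(s)(s^c) \;\geq\; \mu(s,t).
\]
Thus $d$ is LPD-feasible, and weak LP duality gives ${\rm MFP}^*(\mu;G,S,c) \leq \sum_{xy \in EG} c(xy) d(x,y)$; taking the infimum over $\rho$ proves (1).

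For (2), the inequality $\leq$ is (1) applied to $R = P_\mu^+$. For the reverse direction, given any LPD-feasible $d$, define $\rho\colon VG \to \RR^{S^{cr}}$ by $\rho(x)(s^c) := d(s,x)$ and $\rho(x)(s^r) := d(x,s)$. Nonnegativity of $\rho(x)$ is automatic, and the triangle inequality for $d$ combined with $d(s,t) \geq \mu(s,t)$ yields $\rho(x)(s^c) + \rho(x)(t^r) = d(s,x) + d(x,t) \geq \mu(s,t)$, so $\rho(x) \in P_\mu^+$. Also $\rho(s)(s^c) = \rho(s)(s^r) = d(s,s) = 0$, so $\rho(s) \in (P_\mu^+)_s$. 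Finally, for each $xy \in EG$ and $s \in S$, the triangle inequalities $d(s,y) - d(s,x) \leq d(x,y)$ and $d(x,s) - d(y,s) \leq d(x,y)$ bound both entries inside the $\max$ defining $D_\infty(\rho(x),\rho(y))$ by $d(x,y)$. Summing with weights $c(xy)$, the FLP objective at $\rho$ is at most the LPD objective at $d$; together with ${\rm LPD}^* = {\rm MFP}^*(\mu;G,S,c)$ this gives ${\rm FLP}^*(P_\mu^+;G,S,c) \leq {\rm MFP}^*(\mu;G,S,c)$, completing (2).

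I do not expect any substantive obstacle. The construction $\rho(x) = (d(\cdot,x), d(x,\cdot))$ is the directed analogue of the classical Kuratowski isometric embedding of a metric space into an $\ell^\infty$-type space, adapted here to record both ``incoming'' and ``outgoing'' distances via the two copies $S^c$ and $S^r$. The only points requiring genuine care are the asymmetric bookkeeping in the definition of $D_\infty$ — in particular, that the $S^r$-component uses $D_\infty^+(q^r, p^r)$ with the roles of $p$ and $q$ reversed — and the verification that the defining inequality of $P_\mu$ together with the vanishing condition on $R_s$ correctly encodes the constraint $d(s,t) \geq \mu(s,t)$.
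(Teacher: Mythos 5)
Your proof is correct and follows essentially the same route as the paper: the forward direction (an FLP-feasible $\rho$ induces the LPD-feasible metric $d(x,y)=D_\infty(\rho(x),\rho(y))$, using exactly the paper's inequality $D_\infty(\rho(s),\rho(t))\geq \rho(t)(s^c)\geq\mu(s,t)$ from $\rho(s)(s^c)=\rho(t)(t^r)=0$) combined with weak LP duality, and the reverse direction via the embedding $\rho(x)=(d(\cdot,x),d(x,\cdot))$ together with strong LP duality. The only cosmetic difference is that the paper derives (1) from the same forward computation (equivalently, from monotonicity of ${\rm FLP}^*$ under shrinking $R$), whereas you argue (1) directly; the content is identical.
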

\begin{proof}
We first note the following property:
\begin{myitem}
For $s,t \in S$ and $(p,q) \in R_s \times R_t$ 
we have $D_{\infty}(p,q) \geq \mu(s,t)$.\label{clm:RsRt}
\end{myitem}\noindent
Indeed, $D_{\infty}(p,q) \geq (q(s^c) - p(s^c))_+ 
\geq q(s^c) + q(t^r) \geq \mu(s,t)$, 
where we use $p(s^c) = q(t^r) = 0$.

It suffices to show (2).
Take a map $\rho: VG \to P_{\mu}^+$ feasible to FLP.
Let $d$ be a metric on $VG$ defined 
by $d(x,y) = D_{\infty}(\rho(x),\rho(y))$.
By (\ref{clm:RsRt}), 
we have $d(s,t) = D_{\infty}(\rho(s),\rho(t)) \geq \mu(s,t)$ for $s,t \in S$.
Thus $d$ is feasible to LPD with 
the same objective value.
Conversely, 
take a metric $d$ feasible to LPD.
Define $\rho: VG \to \RR^{S^{cr}}$ by
$((\rho(x))(s^c), (\rho(x))(s^r)) = (d(s,x), d(x,s))$ for $s \in S$.
Then $(\rho(x))(s^c) + (\rho(x))(t^r) 
= d(s,x) + d(x,t) \geq d(s,t) \geq \mu(s,t)$.
Hence $\rho(x) \in P_{\mu}^+$.
Moreover $(\rho(s))(s^c) = (\rho(s))(s^r) = d(s,s) = 0$.
Thus $\rho$ is feasible to FLP for $R = P_{\mu}^+$.
By triangle inequality
we have 
$D_{\infty}(\rho(x),\rho(y)) 
= \max \{ 
\max_{s \in S} (d(s,y) - d(s,x))_+, 
\max_{t \in S} (d(x,t) - d(y,t))_+ \} \leq d(x,y)$.
Since $c$ is nonnegative, 
we have 
$\sum c(xy) D_{\infty}(\rho(x),\rho(y)) \leq \sum c(xy) d(x,y)$.
\end{proof}
In the following, we are going 
to determine ``reasonably small'' 
subsets $R \subseteq P_{\mu}^+$ 
for which the strong duality 
holds (i) for general networks and (ii) 
for Eulerian networks.
In the next subsection (Section~\ref{subsec:tightspans}), 
we introduce the 
directed tight span $T_{\mu}$ and 
a fiber $Q_{\mu}$ of the tropical polytope
as subsets in $P_{\mu}$, 
and list their fundamental properties,  
shown by our previous paper~\cite{HK10distance}.
In Section~\ref{subsec:relation}, 
we show that the strong duality 
holds for $R = T_{\mu}$ in every network 
(Theorem~\ref{thm:T-dual}).
We introduce a notion of
a slimmed section and show that 
the strong duality holds for a slimmed section $R \subseteq Q_{\mu}$
in every  Eulerian network (Theorem~\ref{thm:R-dual}).

\subsection{Preliminary: tight spans and tropical polytopes}
\label{subsec:tightspans}
Consider the following (non-convex) polyhedral 
subsets in $P_{\mu}$:
\begin{eqnarray*}
T_{\mu} & = & \mbox{the set of minimal elements of } P_{\mu}^+. \\
Q_{\mu} & = &  \mbox{the set of minimal elements of } P_{\mu}. 
\end{eqnarray*}
We call $T_{\mu}$ the {\em directed tight span}.
The polyhedron $P_\mu$ 
has the linearity space $(\11,-\11)\RR$. 
The projection $\bar Q_{\mu} := Q_{\mu} /(\11,-\11)\RR$
is known as the {\em tropical polytope} 
generated by matrix 
$( - \mu(s,t) \mid s,t \in S)$; 
see Develin and Sturmfels~\cite{DS04}.
We note the relation:
$Q_\mu  \supseteq  Q^+_{\mu}  \subseteq  T_{\mu}  \subseteq  P_{\mu}$. 
In the inclusions, $T_{\mu}$ is a subcomplex 
(i.e., a union of faces) of $P_{\mu}$, 
and $Q^+_{\mu}$ is a subcomplex of $T_{\mu}$.
A subset $R \subseteq Q_{\mu}$ is called 
a {\em section} if the projection 
$p \in R \mapsto \bar p \in \bar Q_{\mu}$ 
is bijective.
A subset $R \subseteq \RR^{S^{cr}}$ is 
said to be {\em balanced} 
if there is no pair $p,q$ of points in $R$
such that $p^c < q^c$ or $p^r < q^r$.
In fact, the projection $Q^+_{\mu} \rightarrow \bar Q_{\mu}$ 
is surjective
and there always exists a balanced 
section in $Q^+_{\mu}$~\cite[Lemma 2.4]{HK10distance}. 
Figure~\ref{fig:Qmu} illustrates $Q_\mu$, $Q_\mu^+$, and $\bar Q_\mu$
for all-one distance on a $3$-set $\{s,t,u\}$.
In this case, $T_{\mu} = Q_{\mu}^+$ holds, 
$Q_\mu$ consists of three infinite strips with a common side, 
$Q_\mu^+$ is a folder consisting of three triangles, and 
$\bar Q_\mu$ is a star of three leaves.
\begin{figure}
\center
\epsfig{file=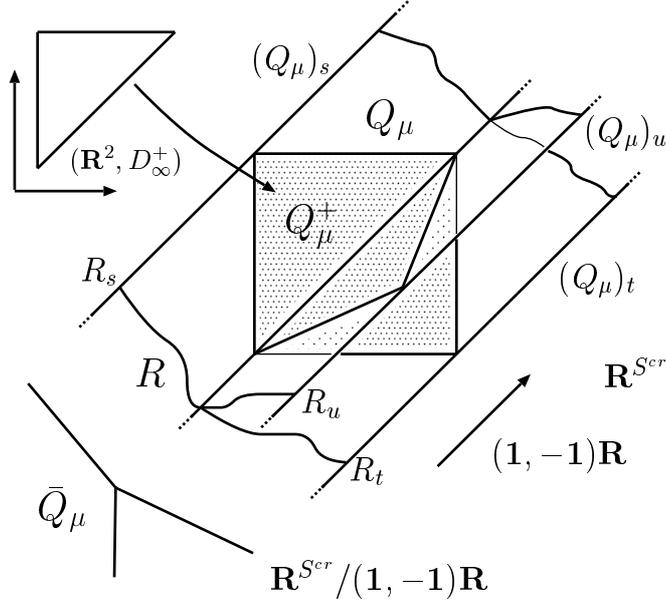,width=0.6\hsize}
\caption{$Q_\mu$, $Q_\mu^+$, and $\bar Q_\mu$}
\label{fig:Qmu}
\end{figure}

The rest of this subsection is devoted to 
listing  basic properties of these polyhedral sets.
They were proved in \cite[Section 2]{HK10distance}.
The most important property for us 
is the existence of {\em nonexpansive retractions} among them.
\paragraph{A. Nonexpansive retractions.}
For two directed metric spaces $(V,d)$ and $(V',d')$, 
a map $\phi: V \to V'$ 
is said to be {\em nonexpansive}
if $d'(\phi(x), \phi(y)) \leq d(x,y)$ for all pairs $x,y \in V$.
By a {\em cycle} $C$ of $V$ 
we mean a cyclic permutation $(x_1,x_2,\ldots,x_n)$
of a finite (multi-)set in $V$. 
Its {\em length} $d(C)$ is defined by 
$
d(x_1,x_2) + d(x_2,x_3) + \cdots + d(x_{n-1}, x_{n}) + d(x_n,x_1).
$
Also, $\phi: V \to V'$ is 
said to be {\em cyclically nonexpansive} 
if $d'( \phi(C)) \leq d(C)$ 
for all cycles $C$ in $V$.
A map from a set $V$ to its subset $S \subseteq V$ 
is said to be a {\em retraction} 
if it is identity on $S$.
\begin{myitem1}
\item[{\rm (1)}] There exists a nonexpansive retraction $\phi: P_{\mu}^+ \to T_{\mu}$ 
                 with $\phi(p) \leq p$ for $p \in P_{\mu}^+$.  
\item[{\rm (2)}] There exists a cyclically nonexpansive 
                  retraction  $\varphi: T_{\mu} \to Q^+_{\mu}$.
\item[{\rm (3)}] For any balanced section $R \subseteq Q_{\mu}$, 
                    the retraction $\varphi_R: Q_{\mu} \to R$ determined by the relation
\[
\varphi_R(p) - p \in (\11, -\11)\RR \quad (p \in Q_{\mu})
\]
is cyclically nonexpansive. \label{fact:nonexpansive}
\end{myitem1}\noindent
See Figure~\ref{fig:retract1} for the retraction in (3).
\paragraph{B. Geodesics and embedding.}
A path $P \subseteq \RR^{S^{cr}}$ is the image of 
a continuous map $\varrho:[0,1] \to \RR^{S^{cr}}$.
The length of $P$ from $\varrho(0)$ to $\varrho(1)$
is defined by the supremum of
$
\sum_{i=0}^{n-1} D_{\infty}(\varrho(t_i), \varrho(t_{i+1}))$ 
over all $n > 0$ and $0 = t_0 < t_1 < \cdots < t_n = 1$.
For simplicity, we restrict $\varrho$ to be sufficiently nice:
$\varrho$ is injective and its length is finite.
A subset $R \subseteq \RR^{S^{cr}}$ is said to be {\em geodesic}
if each pair $p,q \in R$ of points is joined by
a path in $R$ having length $D_{\infty}(p,q)$ from $p$ to $q$.
\begin{myitem}
$T_{\mu}$, $Q_{\mu}$, $Q_{\mu}^+$ and any balanced section in $Q_{\mu}$ are all geodesic.
\label{fact:geodesic}
\end{myitem}\noindent
For $s \in S$, let $\mu_s$ be the point in $\RR^{S^{cr}}$ defined by
\begin{equation}
(\mu_s(t^c), \mu_s(t^r)) = (\mu(t,s),\mu(s,t)) \quad (t \in S).
\end{equation}
Namely $\mu_s$ is composed 
by $s$-th column and $s$-th row vectors of $\mu$ (as a matrix).
\begin{myitem1}
\item[(1)] For any balanced section $R$ in $Q_{\mu}^+$ 
we have $R_s = (Q_{\mu})_s^+ = (T_{\mu})_s$.
\item[(2)] For $s,t \in S$ 
we have $D_{\infty}( (T_{\mu})_s, (T_{\mu})_t) = \mu(s,t)$.
\item[(3)] If $\mu$ is a metric, 
then $(T_{\mu})_s = \{ \mu_s\}$ for each $s \in S$.
 \label{fact:embedding}
\end{myitem1}\noindent
In particular, if $\mu$ is a metric, 
then metric space $(S,\mu)$ 
is isometrically embedded 
into any balanced section $R$ in $Q_{\mu}^+$ by $s \mapsto \mu_s$.
\paragraph{C. Tight extensions.}
For a metric $\mu$ on $S$, an {\em extension} of $\mu$ 
is a metric $d$ on $V$ with $S \subseteq V$ 
and $d(s,t) = \mu(s,t)$ for $s,t \in S$, 
and it is said to be {\em tight} if 
there is no other extension $d'$ on $V$ 
with $d' \neq d$ and $d' \leq d$.
Also an extension $d$ on $V$ of $\mu$ 
is said to be {\em cyclically tight} 
if there is no other extension $d'$ on $V$
such that $d'(C) \leq d(C)$ 
for all cycles $C$ in $V$ and  $d'(C) < d(C)$ 
for some cycle $C$.
Every cyclically tight extension is a tight extension.
The converse is not true.
For example, $S = \{s,t\}$, $\mu(s,t) = \mu(t,s) =1$, 
$V=\{s,t,u,v\}$, and consider two extensions $d,d'$ on $V$ 
defined by
\[
d = \begin{array}{|c|cccc|}
\hline 
  & s & t & u & v \\
\hline
s & 0 & 1 & 1 & 0 \\
t & 1 & 0 & 1 & 0 \\
u & 0 & 0 & 0 & 0 \\
v & 1 & 1 & 1 & 0 \\
\hline
\end{array}, \quad 
d' = \begin{array}{|c|cccc|}
\hline 
  & s & t & u & v \\
\hline
s & 0 & 1 & 1 & 1 \\
t & 1 & 0 & 1 & 1 \\
u & 0 & 0 & 0 & 0 \\
v & 0 & 0 & 0 & 0 \\
\hline
\end{array}
\]
Then $d$ is tight, but not cyclically tight. 
Indeed, for every pair $(x,y) \in V \times V \setminus S \times S$
if $d(x,y) > 0$, then $d(s,t) = d(s,x) + d(x,y)+ d(y,t)$
or $d(t,s) = d(t,x) + d(x,y)+ d(y,s)$; 
this means that we cannot decrease $d(x,y)$ 
keeping the triangle inequality.  
Compare $d$ with $d'$. 
Then $d'(C) \leq d(C)$ for all cycles $C$, 
and $0 = d'(u,v) + d'(v,u) < d(u,v) + d(v,u) = 1$. 
Thus $d$ is not cyclically tight.

Every tight extension 
and cyclically tight extension are embedded into 
$(T_{\mu},D_{\infty})$ and $(Q_{\mu}^+, D_{\infty})$, respectively.
We use this fact in Section~\ref{sec:unbounded}.
\begin{myitem}
Let $\mu$ be a metric on $S$ and $d$ its extension on $V$.
\begin{itemize}
\item[(1)] $d$ is tight 
if and only if there is 
an isometric embedding $\rho: V \to T_{\mu}$
such that $\rho(s) = \mu_s$ for each $s \in S$.
\item[(2)] $d$ is cyclically tight
if and only if there is 
an isometric embedding $\rho: V \to Q_{\mu}^+$
such that $\rho(s) = \mu_s$ for each $s \in S$ and 
$\rho(V)$ is balanced.
\end{itemize} \label{fact:extension}
\end{myitem}\noindent
Here 
an {\em isometric embedding} from $(V,d)$ to $(V',d')$
is a map $\rho: V \to V'$ 
satisfying $d'(\rho(x),\rho(y)) = d(x,y)$
for all $x,y \in V$.
\paragraph{D. Further technical stuffs.}
For a point $p \in P_\mu$,
let $K_{\mu}(p) = K(p)$ denote the bipartite graph on $S^{cr}$ 
with edge set $\{ s^ct^r \mid p(s^c) + p(t^r) = \mu(s,t) \}$.
\begin{myitem1}
\item[(1)] A point $p \in P_{\mu}^+$ 
belongs to $T_{\mu}$ if and only if $K(p)$ 
has no isolated node $u$ with $p(u) > 0$.
\item[(1')] A point $p \in P_{\mu}$ 
belongs to $Q_{\mu}$ if and only if $K(p)$ has no isolated node.
\item[(2)] 
For $p \in T_{\mu}$, 
the dimension of the minimal face of $T_{\mu}$ 
containing $p$ is equal 
to the number of components in $K(p)$ 
having no node $u$ with $p(u) = 0$.
\item[(2')] 
For $p \in Q_{\mu}$, 
the dimension of the minimal face of $Q_{\mu}$ containing $p$ 
is equal to the number of components in $K(p)$~\cite[Proposition 17]{DS04}.
\item[(3)] Any $k$-dimensional face $F$ in $(T_{\mu},D_{\infty})$
is isometric to a $k$-dimensional polytope in $(\RR^k, D_{\infty}^+)$.
\item[(4)] 
$D_{\infty}(p,q) = D_{\infty}^+(p^c,q^c) = D_{\infty}^{+}(q^r,p^r)$ 
holds for $p,q \in Q_{\mu}$ and for $p,q \in T_{\mu}$. 
\item[(5)] $T_{\mu}$ has dimension at most $1$ if 
and only if $T_{\mu}$ is a path 
isometric to a segment in $(\RR, D_{\infty}^+)$.
\label{fact:technical}
\end{myitem1}\noindent
The property (3) follows from \cite[(2.1)]{HK10distance}. 

Our technical arguments use a method of
perturbing a point $p \in Q_{\mu}^+$ 
to another point $p' \in Q_{\mu}^+$.
For a node subset $U$ in a graph $K(p)$, 
the set of nodes in $S^{cr} \setminus U$ incident to $U$ 
is denoted by $N_p(U) = N(U)$.
The following consideration 
is a basis for our perturbation method, 
which has a similar flavor 
of manipulating dual variables in 
bipartite matching problems:
\begin{myitem}
For $p \in Q_{\mu}^+, X \subseteq S$,
let $p' := p - \epsilon \11_{X^c}$ for 
small $\epsilon > 0$, and $Y^r := N_p(X^c)$.
\begin{itemize}
\item 
$p(s^c) > 0$ ($s^c \in X^c$) is necessary for keeping the nonnegativity of $p'$.
\item Put $p' \leftarrow p' + \epsilon \11_{Y^r}$ so that $p' \in P_{\mu}$.
\item Then all edges joining
$S^{c} \setminus X^c$ and $Y^r$ vanish in $K(p')$.
\item Therefore, $p'$ belongs to $Q_{\mu}^+$ if and only if
each node in $S^{c} \setminus X^c$ is joined 
to $S^r \setminus Y^r$ in $K(p)$.
\item We can increase $\epsilon$ 
until some coordinate of $p'$ in $X^c$ reaches zero or 
there appears an edge joining $X^c$ and $S^r \setminus Y^r$.
\end{itemize} \label{fact:movement}
\end{myitem}\noindent
Here the fourth implication uses (\ref{fact:technical})~(1').

\subsection{Duality relations}\label{subsec:relation}
First 
we establish strong duality relations 
for general networks and for inner Eulerian networks, 
which are easy consequences of the existence 
of nonexpansive retractions~(\ref{fact:nonexpansive}). 
\begin{Thm}\label{thm:T-dual}
Let $\mu$ be a directed distance on $S$.
\begin{itemize}
\item[{\rm (1)}] ${\rm MFP}^* (\mu; G,S,c) = {\rm FLP}^* (T_{\mu}; G,S,c)$ holds for every network $(G,S,c)$.
\item[{\rm (2)}] 
${\rm MFP}^* (\mu; G,S,c) = {\rm FLP}^* (R; G,S,c)$ holds for 
every balanced section $R$ in $Q_{\mu}^+$ and
every inner Eulerian network $(G,S,c)$.
\end{itemize} 
\end{Thm}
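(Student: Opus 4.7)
The preceding Lemma gives the inequality $\mathrm{MFP}^*(\mu;G,S,c) \leq \mathrm{FLP}^*(R;G,S,c)$ for every $R \subseteq P_{\mu}^+$ and the identity $\mathrm{MFP}^*(\mu;G,S,c) = \mathrm{FLP}^*(P_{\mu}^+;G,S,c)$. So in both parts my plan is to take an arbitrary $\rho\colon VG \to P_{\mu}^+$ feasible for FLP on $P_{\mu}^+$, post-compose it with the retractions from~(\ref{fact:nonexpansive}) to land in $R$, verify feasibility, and bound the new objective by the old one; applied to an optimal $\rho$ this yields the reverse inequality.

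For~(1), I set $\rho' := \phi \circ \rho$ using the nonexpansive retraction $\phi\colon P_{\mu}^+ \to T_{\mu}$ from~(\ref{fact:nonexpansive})(1). Feasibility $\rho'(s) \in (T_{\mu})_s$ is immediate: $\rho(s)(s^c) = \rho(s)(s^r) = 0$ together with $0 \leq \phi(\rho(s)) \leq \rho(s)$ forces $\phi(\rho(s))(s^c) = \phi(\rho(s))(s^r) = 0$. The edge-by-edge bound $D_{\infty}(\phi(\rho(x)),\phi(\rho(y))) \leq D_{\infty}(\rho(x),\rho(y))$ and $c \geq 0$ finish the objective comparison.

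For~(2), I compose all three retractions: $\rho' := \varphi_R \circ \psi \circ \phi \circ \rho$, where $\psi\colon T_{\mu} \to Q_{\mu}^+$ is cyclically nonexpansive by~(\ref{fact:nonexpansive})(2) and $\varphi_R\colon Q_{\mu} \to R$ is cyclically nonexpansive by~(\ref{fact:nonexpansive})(3). As in~(1), $\phi(\rho(s)) \in (T_{\mu})_s$; by~(\ref{fact:embedding})(1), $(T_{\mu})_s = (Q_{\mu}^+)_s = R_s$, so $\psi$ and $\varphi_R$ (being retractions onto $Q_{\mu}^+$ and $R$) fix $\phi(\rho(s))$, giving $\rho'(s) = \phi(\rho(s)) \in R_s$. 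The first factor $\phi$ contributes its objective bound edge-by-edge by nonexpansiveness; the remaining composite $F := \varphi_R \circ \psi$ is cyclically nonexpansive (cyclic nonexpansiveness composes) and satisfies $F(\phi(\rho(s))) = \phi(\rho(s))$ for all $s \in S$. Everything thus reduces to the following key lemma.

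\emph{Key lemma.} If $(G,S,c)$ is inner Eulerian, $F\colon V \to V'$ is cyclically nonexpansive between subsets of $\RR^{S^{cr}}$, and $q\colon VG \to V$ satisfies $F(q(s)) = q(s)$ for every $s \in S$, then
$\sum_{xy \in EG} c(xy)\, D_{\infty}(F(q(x)), F(q(y))) \leq \sum_{xy \in EG} c(xy)\, D_{\infty}(q(x), q(y))$.
To prove it, I augment the network: since $c$ is inner Eulerian, the divergences at terminals sum to zero, so there exist $\lambda(s,t) \geq 0$ for which adding arcs $s \to t$ of capacity $\lambda(s,t)$ produces a totally Eulerian network $(G',S,c')$. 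Decomposing $c'$ into directed cycles and applying cyclic nonexpansiveness cycle-by-cycle yields the same inequality summed over $EG'$; the contributions of the added terminal-to-terminal arcs appear identically on both sides because $F$ fixes $q(s)$ at every terminal, so they cancel and leave the inequality on $EG$. The main obstacle is exactly this gap between cyclic and pointwise nonexpansiveness: no edge-by-edge bound is available for $\psi$ or $\varphi_R$, and the inner Eulerian hypothesis is essential to globalize the cycle-wise bounds through this augment-and-cancel trick.
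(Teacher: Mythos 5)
Your proof is correct and takes essentially the same route as the paper: compose an optimal location map with the retractions of (\ref{fact:nonexpansive}), use (\ref{fact:embedding})~(1) to see that the terminal images lie in $R_s$ and are fixed by those retractions (feasibility), and convert cyclic nonexpansiveness into the edge-weighted bound via an Eulerian decomposition of the capacity. The only difference is bookkeeping: the paper decomposes $c$ directly into cycles and $S$-paths and closes each path using $D_{\infty}(\varphi\circ\rho(s),\varphi\circ\rho(t))=D_{\infty}(\rho(s),\rho(t))$, whereas your key lemma adds terminal-to-terminal arcs to reach a totally Eulerian network, decomposes into cycles only, and cancels the added arcs' (unchanged) contributions --- the same cancellation in a different wrapper.
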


\begin{proof}
Take an optimal map $\rho$ for FLP 
with $R = P_{\mu}^+$.
Take a nonexpansive 
retraction $\phi:P_{\mu}^+ \to T_{\mu}$ 
in~(\ref{fact:nonexpansive})~(1).
Consider the composition $\phi \circ \rho: VG \to T_{\mu}$.
Then
$\phi \circ \rho$ is also feasible, and 
does not increase the objective value.
Thus we have (1).

Next we show (2).
Suppose that $(G,S,c)$ is inner Eulerian.
Then 
the capacity function $c: EG \to \ZZ_{+}$
is decomposed into the sum of the incidence vectors 
of cycles $C_1,C_2,\ldots, C_m$ and 
$S$-paths $P_1,P_2,\ldots, P_n$ (possibly repeating).
Take an optimal map $\rho$
for FLP with $R = T_{\mu}$.
Then we have
\begin{equation}\label{eqn:CiPj}
\sum_{xy \in EG} c(xy) D_{\infty}(\rho(x),\rho(y)) 
= \sum_{i=1}^m D_{\infty} (\rho(C_i)) 
+ \sum_{j=1}^n D_{\infty} (\rho(P_j)).
\end{equation}
We can take a cyclically nonexpansive 
retraction $\varphi :T_{\mu} \to R$ 
by (2),(3) in (\ref{fact:nonexpansive}).
Since 
$\varphi \circ \rho(s) = \rho(s) \in (T_{\mu})_s = R_s$ 
for each $s \in S$ 
by (\ref{fact:embedding})~(1), 
$\varphi$ is identity on $(T_{\mu})_s$ and thus
$\varphi \circ \rho$ is also feasible.
Moreover, by cyclically nonexpansiveness, 
$D_{\infty}(\varphi \circ \rho(C_i)) 
\leq D_{\infty}(\rho(C_i))$ holds
and also $D_{\infty}(\varphi \circ \rho(P_j)) 
\leq D_{\infty}(\rho(P_j))$ holds
by 
$D_{\infty}(\varphi \circ \rho(s), \varphi \circ \rho(t))
= D_{\infty}(\rho(s),\rho(t))$ for $s,t \in S$.
Hence $\varphi \circ \rho$ is also optimal.
\end{proof}
We give some examples.
Consider the all-one distance $\mu$ 
on a 3-set $\{s,t,u\}$; recall Figure~\ref{fig:Qmu}.
Then the FLP is 
a location problem on a directed metric space 
on a folder consisting of three triangles, each of which 
is isometric to triangle 
$\{ (x,y) \in \RR^2 \mid 0 \leq y \leq x \leq 1  \}$ in $(\RR^2, D_{\infty}^+)$.
Suppose that the network is inner Eulerian.
We can take a balanced section $R$ in $Q_{\mu}^+$, which is a tree.
By a cyclically nonexpansive retraction from $Q_{\mu}^+$ to $R$, 
the FLP reduces to a location problem on the tree $R$; see Figure~\ref{fig:retract1}.
\begin{figure}
\center
\epsfig{file=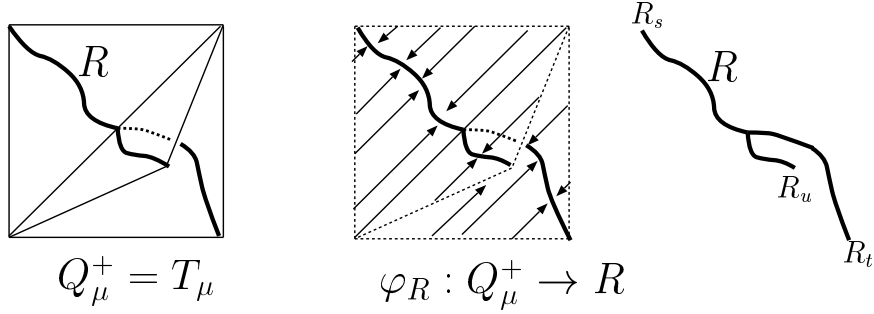,width=0.8\hsize}
\caption{Retraction from $Q_{\mu}^+$ to $R$}
\label{fig:retract1}
\end{figure}
Consider the $2$-commodity flow case; 
let $S = \{s,s',t,t'\}$, and let $\mu(s,t) = \mu(s',t') = 1$ 
and let the other distances be zero. 
Then $T_{\mu}$ is given by $\{ \11_{\{t,t'\}^r} + 
\alpha (\11_{s^c}- \11_{t^r}) + \beta (\11_{(s')^c}- \11_{(t')^r}) 
\mid 0 \leq \alpha, \beta \leq 1\}$, 
which is isometric to a square $\{(x,y) \in \RR^2 \mid 0 \leq x,y \leq 1\}$ in 
$(\RR^2, D_{\infty}^+)$.
Terminal regions
$(T_\mu)_s, (T_\mu)_t,  (T_\mu)_{s'}, (T_{\mu})_{t'}$
correspond to four sides as in Figure~\ref{fig:2commodity}.
In this case, $T_{\mu} = Q_{\mu}^+$ holds, 
and moreover $Q_{\mu}^+$ itself is a balanced section.
In contrast to the previous example, 
the region contraction in FLP 
does not occur if the inner Eulerian condition is imposed.
\begin{figure}
\center
\epsfig{file=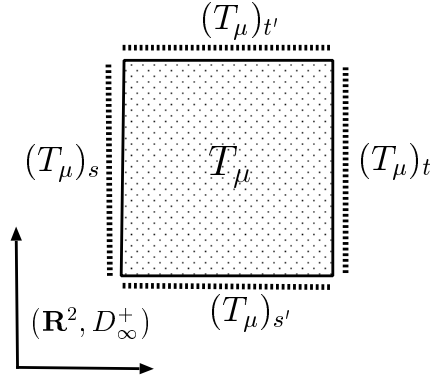,width=0.4\hsize}
\caption{The tight span $T_{\mu}$ for the 2-commodity distance $\mu$}
\label{fig:2commodity}
\end{figure}

\paragraph{Slimmed sections and Eulerian condition on terminals.}
Next we consider the case where
some of terminals 
fulfill the Eulerian condition.
In this case, 
the strong duality holds for
further smaller subsets in $Q_{\mu}$, 
called {\em slimmed sections}.
To define a slimmed section,
we need several (somewhat technical) 
notions.
Recall notions of $K(p)$ and $N_p(\cdot)$ 
associated with $p \in Q_{\mu}$; 
see Section~\ref{subsec:tightspans}~D.
Let ${\cal S}_0$ be 
the set of subsets $X$ of $S$ 
such that $\mu(s,t) = 0$ for all $(s,t) \in X \times X$; 
obviously $\{s\} \in {\cal S}_0$.
For $X \in {\cal S}_0$, 
let $Q_{\mu,X}$ 
denote the set of points $p \in Q_{\mu}$ 
with $s^cs^r \in EK(p)$ for $s \in X$ 
and $s^cs^r \not \in EK(p)$ for $s \not \in X$; 
in particular 
$Q_{\mu,X} = \bigcap_{s \in X} (Q_\mu)_s \setminus  
\bigcup_{s \in S \setminus X} (Q_{\mu})_s$.
A point $p$ in $Q_{\mu}$ 
is called a {\em fat} relative to $X$
if $p \in Q_{\mu,X}$, 
and 
$N_p(S^c \setminus X^c) \subseteq S^r \setminus X^r$ or 
$N_p(S^r \setminus X^r) \subseteq S^c \setminus X^c$.
The {\em degenerate set} $Q_{\mu,X}^{deg}$ 
relative to $X$
is the set of points $p$ 
in $Q_{\mu,X}$ with 
$N_p(S^c \setminus X^c) = S^r \setminus X^r$ 
or $N_p(S^r \setminus X^r) = S^c \setminus X^c$.
Any point in a degenerate set is a fat.
A {\em proper fat} 
is a fat not belonging to any degenerate set.
Let $Q_{\mu}^{slim}$ be
the subset of $Q_{\mu}$ obtained 
by deleting all proper fats.
We consider
the following equivalence 
relation $\sim$ on $Q_{\mu}^{slim}$:
$p \sim q$ if $p - q \in (\11, -\11)\RR $, 
or for some $X \in {\cal S}_0$, both $p$ and $q$ 
belong to $Q_{\mu,X}^{deg}$ and
$p - q \in (\11, -\11)\RR +(\11_{X^c}, - \11_{X^r})\RR$.
The quotient $Q_{\mu}^{slim}/\sim$ is called 
the {\em slimmed tropical polytope} 
associated with $\mu$, and is 
denoted by $\bar Q_{\mu}^{slim}$.
The tropical polytope and 
the slimmed tropical polytope are the same if $\mu$ is a metric.
\begin{Prop}\label{prop:slim}
If $\mu$ is a metric, 
then $Q_{\mu}$ has no fat, 
and hence $\bar Q_{\mu}^{slim} = \bar Q_{\mu}$.
\end{Prop}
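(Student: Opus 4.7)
The plan is to suppose, for contradiction, that a fat point $p\in Q_{\mu,X}$ exists and derive a violation of the triangle inequality. By symmetry I may assume the first alternative in the fat definition, namely $N_p((S\setminus X)^c)\subseteq (S\setminus X)^r$, i.e.\ there is no edge $s^c t^r\in EK(p)$ with $s\in S\setminus X$ and $t\in X$. I treat the main case $\emptyset\neq X\subsetneq S$ first and handle the extreme cases $X=\emptyset$ and $X=S$ separately at the end.

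The opening move is to exploit the metric assumption together with $X\in\mathcal{S}_0$ to ``collapse'' $X$ to a single effective point. The triangle inequality applied to pairs $s,t\in X$, using $\mu(s,t)=\mu(t,s)=0$, gives $\mu(s,u)=\mu(t,u)$ and $\mu(u,s)=\mu(u,t)$ for every $u\in S$, so the quantities $\mu(X,u)$ and $\mu(u,X)$ are well defined. In parallel, the relations $p(s^c)+p(s^r)=0$ for $s\in X$ combined with the $P_\mu$-inequalities $p(s^c)+p(t^r)\ge\mu(s,t)=0$ for pairs $s,t\in X$ force $p(s^c)$ to be constant on $X$; call this value $c_X$, so $p(s^r)=-c_X$ for each $s\in X$. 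With this normalization, the fat condition $N_p((S\setminus X)^c)\subseteq (S\setminus X)^r$ translates, via $p(t^r)=-c_X$ and $\mu(s,t)=\mu(s,X)$ for $s\in S\setminus X$, $t\in X$, into the strict inequality $p(s^c)>\mu(s,X)+c_X$ for every $s\in S\setminus X$.

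Now the key step: pick any $u\in S\setminus X$ (possible because $X\ne S$). By the characterization in (\ref{fact:technical})(1'), node $u^c$ is not isolated in $K(p)$, and the fat inclusion forces its neighbor to lie in $(S\setminus X)^r$; so there exists $v\in S\setminus X$ with $p(u^c)+p(v^r)=\mu(u,v)$. The strict bound on $p(u^c)$ yields $p(v^r)<\mu(u,v)-\mu(u,X)-c_X$, while the $P_\mu$-inequality at $(s,v)$ for any $s\in X$ yields $p(v^r)\ge\mu(X,v)-c_X$. Chaining these gives $\mu(u,v)>\mu(u,X)+\mu(X,v)$, which contradicts the triangle inequality $\mu(u,v)\le\mu(u,s)+\mu(s,v)=\mu(u,X)+\mu(X,v)$ for any $s\in X$. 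The mirror argument handles the other fat alternative $N_p((S\setminus X)^r)\subseteq (S\setminus X)^c$. The main obstacle, rather than the argument itself, is a careful treatment of the boundary cases $X=\emptyset$ and $X=S$: there the fat inclusion is vacuously satisfied and the strict bound above carries no information, but one checks directly that the corresponding equality $N_p((S\setminus X)^c)=(S\setminus X)^r$ (or its mirror) is forced by the no-isolated-node property of $K(p)$, so any such point automatically lies in a degenerate set and is not a proper fat; hence $\bar Q_\mu^{slim}=\bar Q_\mu$ as claimed.
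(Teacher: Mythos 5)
Your proposal is correct, but it follows a genuinely different route from the paper's. The paper disposes of the statement in two lines by importing facts about tight spans: since $Q_{\mu,X}\subseteq (Q_\mu)_s$ for $s\in X$ and, for a metric $\mu$, $(Q_\mu)_s^+=(T_\mu)_s=\{\mu_s\}$ by (\ref{fact:embedding})~(1),(3), the set $Q_{\mu,X}^+$ collapses to the single point $\mu_s$, and in $K(\mu_s)$ the nodes $s^c$ and $s^r$ are adjacent to all of $S^r$ and $S^c$, respectively, which kills both alternatives in the definition of a fat. You instead argue from first principles inside $Q_\mu$: the metric property on $X\times X$ makes $\mu(\cdot,X)$ and $\mu(X,\cdot)$ well defined and forces $p(s^c)\equiv c_X$, $p(s^r)\equiv -c_X$ on $X$; the fat alternative becomes the strict bound $p(s^c)>\mu(s,X)+c_X$ on $S\setminus X$; minimality of $p$ via (\ref{fact:technical})~(1') gives a tight edge $u^cv^r$ with $v\in S\setminus X$, and combining the strict bound with $c_X+p(v^r)\ge\mu(X,v)$ yields $\mu(u,v)>\mu(u,X)+\mu(X,v)$, contradicting the triangle inequality through any $s\in X$; the mirror alternative is symmetric. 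This is sound. What each approach buys: the paper's proof is shorter but relies on the earlier facts $(Q_\mu)_s^+=(T_\mu)_s=\{\mu_s\}$ and implicitly on reducing to a nonnegative representative of $p$ modulo $(\11,-\11)\RR$, while yours is self-contained, needs no such normalization, and makes explicit where the triangle inequality enters. Your treatment of the vacuous cases $X=\emptyset$ and $X=S$ (such points are degenerate, hence never proper fats, and for these $X$ the extra identification $(\11_{X^c},-\11_{X^r})\RR$ adds nothing beyond $(\11,-\11)\RR$, so $\bar Q_\mu^{slim}=\bar Q_\mu$ still follows) is consistent with the paper's convention, which implicitly restricts attention to nonempty proper $X$ --- note that the paper's own argument also silently assumes $s\in X$ and $X\neq S$.
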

\begin{proof}
By $(Q_{\mu})_s \supseteq Q_{\mu,X}$ for $s \in X$ 
and (\ref{fact:embedding})~(3), 
$Q_{\mu,X}^+$ is a single point $\mu_s$ for $s \in X$.
In $K(\mu_s)$, 
node $s^c$ is incident to all nodes in $S^r$, 
and $s^r$ is incident to all nodes in $S^c$; 
thus $\mu_s$ is never a fat.
\end{proof}
Again we consider 
special sections in $Q_{\mu}^{slim}$, called {\em slimmed sections}.
Here a section is a subset of $Q_{\mu}^{slim}$ 
bijectively projected into $\bar Q_{\mu}^{slim}$.
We first define a slimmed section $R$ in $(Q_{\mu}^{slim})^+$, 
which is a section such that it is balanced, 
and for each $X \in {\cal S}_0$, 
there is no pair $p,q \in (Q_{\mu,X}^{deg})^+$ 
such that $p(s^c) < q(s^c)$ for all $s^c \in S^c \setminus X^c$ or
$p(s^r) < q(s^r)$ for all $s^r \in S^r \setminus X^r$.
Next, a slimmed section $R$ in $Q_{\mu}^{slim}$ is a section 
such that it is balanced and 
$R/(\11,-\11)\RR = R'/(\11,-\11)\RR$ for some slimmed section $R'$ 
in $(Q_{\mu}^{slim})^+$ 
(recall that the projection from $Q_{\mu}^+$ to $\bar Q_{\mu}$ 
is surjective).

Figure~\ref{fig:ex} depicts 
two examples of $Q_{\mu}^+$ 
together with $K(p)$ for an interior point $p$ in each face.
\begin{figure}
\center
\epsfig{file=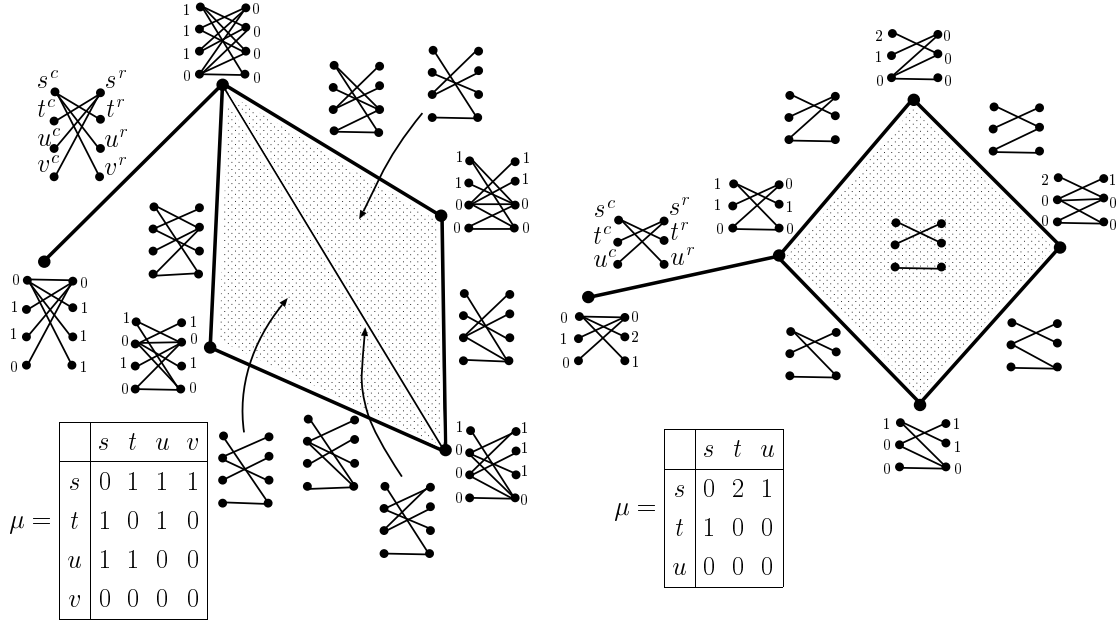,width=1.0\hsize}
\caption{Two examples of $Q_{\mu}^+$}
\label{fig:ex}
\end{figure}
In the left example,  
$Q_{\mu}^+$ is obtained from a folder of two triangles 
by attaching one segment on the top.
Here any point in triangles except upper edges
is a proper fat relative to $\{v\}$. 
So $(Q_{\mu}^{slim})^+$ is a star of three edges, 
and is a slimmed section.
In the right example, 
$Q_{\mu}^+$ is the union of square and segment.
Although there is no proper fat,
points in the square except the left and right corners
form a section of degenerate set $Q_{\mu, \{u\}}^{deg}$.
A slimmed section is obtained by 
replacing the square by an appropriate curve 
connecting the left and right corners; 
see Figure~\ref{fig:retract2}.

A terminal $s \in S$ is said to be {\em proper} 
if $(Q_{\mu})_s$ has no fat.
A network $(G,S,c)$ is said to be {\em properly inner Eulerian} 
(relative to $\mu$) if 
every node except proper terminals fulfills
the Eulerian condition. 
The main result here is the following:
\begin{Thm}\label{thm:R-dual}
Let $\mu$ be a directed distance on $S$.
\begin{itemize}
\item[{\rm (1)}] 
${\rm MFP}^* (\mu; G,S,c) = {\rm FLP}^* (R; G,S,c)$ holds
for every slimmed section $R$ in $(Q_{\mu}^{slim})^+$ and 
every properly inner Eulerian network $(G,S,c)$. 
\item[{\rm (2)}] ${\rm MFP}^* (\mu; G,S,c) = {\rm FLP}^* (R; G,S,c)$ holds
for every slimmed section $R$ in $Q_{\mu}^{slim}$ and 
every totally Eulerian network $(G,S,c)$. 
\end{itemize}
\end{Thm}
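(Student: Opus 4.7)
The overall plan is to extend the two-step retraction of Theorem~\ref{thm:T-dual} by a third, \emph{slimming} retraction. Concretely, after Theorem~\ref{thm:T-dual}~(2) delivers an optimal map $\rho:VG\to R'$ into a balanced section $R'\subseteq Q_\mu^+$ (applicable in both (1) and (2), since properly inner Eulerian and totally Eulerian each imply inner Eulerian), I would construct a retraction $\varphi$ from $R'$ onto the target slimmed section $R$ having three properties: (a) cyclic nonexpansiveness, (b) identity on $R'_s$ for every \emph{proper} terminal $s$, and (c) $\varphi(R'_s)\subseteq R_s$ for every $s\in S$. Given such a $\varphi$, the composition $\varphi\circ\rho$ is feasible to FLP on $R$, and the Euler decomposition of the capacity function as in (\ref{eqn:CiPj}) will give ${\rm FLP}^*(R;G,S,c)\le{\rm FLP}^*(R';G,S,c)={\rm MFP}^*(\mu;G,S,c)$. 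Combined with the weak duality lemma of Section~\ref{sec:duality}, this proves the theorem.

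In part~(1), the properly inner Eulerian hypothesis forces every non-proper terminal to be Eulerian, so the $S$-path components of the decomposition (\ref{eqn:CiPj}) run between \emph{proper} terminals. Property (b) then gives $D_\infty(\varphi\rho(s_{P_j}),\varphi\rho(t_{P_j}))=D_\infty(\rho(s_{P_j}),\rho(t_{P_j}))$ for every $S$-path $P_j$, while (a) bounds $D_\infty(\varphi\rho(C_i))\le D_\infty(\rho(C_i))$ for every cycle $C_i$; identity on proper terminal regions is consistent with the slimming because $(Q_\mu)_s$ has no fat for proper $s$ (cf.\ Proposition~\ref{prop:slim}), so $(Q_\mu^{slim})_s=(Q_\mu)_s$ and hence $R_s=R'_s$ by (\ref{fact:embedding})~(1). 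In part~(2), totally Eulerian means the decomposition contains only cycles and (b) is vacuous; since $R$ may now sit in $Q_\mu^{slim}$ rather than $(Q_\mu^{slim})^+$, I would additionally translate each $\rho(x)$ within its class modulo $(\11,-\11)\RR$ --- a $D_\infty$-isometry, because $(\11,-\11)\RR$ is the linearity space of $P_\mu$ --- to exploit the defining relation $R/(\11,-\11)\RR=R'/(\11,-\11)\RR$ for some slimmed section $R'\subseteq(Q_\mu^{slim})^+$ guaranteed by the definition, and then apply $\varphi$.

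The main obstacle is the construction of $\varphi$. I would build it by iteratively applying the perturbation scheme (\ref{fact:movement}). For a proper fat $p\in R'$ relative to $X\in\mathcal{S}_0$, say with $N_p(S^c\setminus X^c)\subsetneq S^r\setminus X^r$, the strict inclusion leaves room to slide $p$ in a direction supported on $X^{cr}$ with opposite signs on the $c$- and $r$-parts, staying inside $Q_\mu^+$ by the last clause of (\ref{fact:movement}) and strictly reducing fatness until either the slide exits the fat cell (a coordinate vanishes) or enters a degenerate cell (a new edge of $K(p)$ appears). Inside a degenerate cell $Q_{\mu,X}^{deg}$, this same direction, together with $(\11,-\11)\RR$, spans the two-dimensional linearity defining $\sim$, so the extra inequality built into the definition of a slimmed section pins down a unique representative to slide to. Each elementary slide has disjoint supports on $S^c$ and $S^r$ with opposite signs, so by (\ref{fact:technical})~(4) it is $D_\infty$-preserving; telescoping elementary slides along any cycle then yields cyclic nonexpansiveness globally, and property~(b) is automatic because no slide is ever performed on $R'_s$ when $s$ is proper.

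The genuinely delicate step is to verify that the local slides fit together into a single well-defined retraction across boundaries between fat, degenerate, and non-fat cells without generating $D_\infty$-expansion. This is where the precise definition of a \emph{slimmed} section (balanced, plus the monotonicity condition on degenerate sets), rather than an arbitrary section of $Q_\mu^{slim}$, is essential: the monotonicity requirement canonically selects a single representative in each degenerate cell and so makes the piecewise construction unambiguous, while balancedness of the target ensures that the slide directions are consistent on the $c$- and $r$-coordinates and do not overshoot into a different cell's prescription.
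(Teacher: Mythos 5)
Your reduction is exactly the paper's: properly inner Eulerian (resp.\ totally Eulerian) lets you decompose $c$ into cycles and $S^*$-paths between proper terminals (resp.\ cycles only), and the theorem then follows once one has a cyclically nonexpansive retraction onto the slimmed section that fixes proper-terminal regions and maps $(Q_\mu)_s^+$ into itself; part (2) via translation along $(\11,-\11)\RR$ matches the paper's appeal to (\ref{fact:nonexpansive})~(3). So the frame is right. But the entire substance of the theorem lives in that retraction --- the paper isolates it as Lemma~\ref{lem:slimmed} and spends several pages proving it --- and your sketch of it contains a genuine error at the decisive point. You claim that each elementary slide (subtracting $\epsilon$ on some $c$-coordinates and adding $\epsilon$ on the corresponding $N_p(\cdot)$ $r$-coordinates, as in (\ref{fact:movement})) is ``$D_\infty$-preserving'' by (\ref{fact:technical})~(4), and that cyclic nonexpansiveness follows by telescoping. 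This is false: the slide moves only the points inside the fat cell while leaving all other points fixed, so ordered distances between moved and unmoved points do change. The paper's key estimate (\ref{clm:key}) makes this precise: for a moved point $p$ and a fixed point $q$, $D_\infty(\varphi^c_{X,\epsilon}(p),q)$ can \emph{increase} by up to $\epsilon^*$, while $D_\infty(q,\varphi^c_{X,\epsilon}(p))$ decreases by exactly $\epsilon^*$. The map is not nonexpansive pairwise; cyclic nonexpansiveness is rescued only by a cancellation argument --- along any cycle the number of consecutive pairs that ``enter'' the moved set equals the number that ``exit,'' so the $+\epsilon$ losses are offset by the $-\epsilon$ gains. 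Your telescoping intuition gestures at this, but as written it rests on a premise that is simply wrong.

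Two further ingredients of the paper's proof are absent from your plan and are needed to make even the corrected cancellation argument work. First, the estimate (\ref{clm:key}) requires that the fixed endpoint $q$ not be a proper fat relative to a proper subset $Y\subset X$; this forces the sets of ${\cal S}_0$ to be processed in an order compatible with inclusion ($X_i\subseteq X_j\Rightarrow i\le j$), and the proof that the exact decrease $-\epsilon^*$ holds hinges on exhibiting an edge of $K(q)$ joining $S^c\setminus X^c$ and $X^r$, which is where properness/degeneracy of $q$ enters. Second, after eliminating proper fats one still must retract the $c$-maximal slices of each degenerate set $(Q_{\mu,X}^{deg})^+$ onto the prescribed subset $R_X$ of the given slimmed section, and verify that this second family of maps is again cyclically nonexpansive (the paper does this by projecting to $\RR^{(S\setminus X)^{cr}}$ and reusing the balanced-section retraction argument). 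Saying that the monotonicity condition ``pins down a unique representative'' addresses well-definedness but not the metric estimate. In short: your proposal correctly identifies the right statement to prove (essentially Lemma~\ref{lem:slimmed}) but does not prove it, and the one quantitative claim you do make about the slides is incorrect.
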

The proof uses
the following new retraction lemma:
\begin{Lem}\label{lem:slimmed}
For any slimmed section $R$ in $Q_{\mu}^+$,
there exists a cyclically nonexpansive retraction $\varphi$
from $Q_\mu^+$ to $R$ 
with $\varphi((Q_{\mu})_s^+) \subseteq (Q_{\mu})_s^+$ 
for each $s \in S$.
\end{Lem}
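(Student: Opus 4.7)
The strategy is to enhance the retraction of (\ref{fact:nonexpansive})(3) --- which handles balanced sections of $Q_\mu$ via $(\11,-\11)\RR$-shifts --- by adjoining shifts of the form $(\11_{X^c},-\11_{X^r})\RR$ on points that fall inside a degenerate set $Q_{\mu,X}^{deg}$. These additional shifts correspond to the finer equivalence relation $\sim$ defining $\bar Q_\mu^{slim}$.

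First, I would extend $R$ to an auxiliary balanced section $R^\flat$ of $Q_\mu^+$: outside degenerate sets and proper fats set $R^\flat := R$; inside a degenerate set $Q_{\mu,X}^{deg}$, choose a section compatible with the slimness of $R$ so that $R$ is recovered after quotienting by $(\11_{X^c},-\11_{X^r})\RR$; and for a proper fat $p$ relative to $X$ (say with $N_p(S^c \setminus X^c) \subsetneq S^r \setminus X^r$), pick the representative obtained from $p$ by the maximal admissible $(\11_{X^c},-\11_{X^r})$-shift permitted by (\ref{fact:movement}), which terminates either on the boundary of $Q_{\mu,X}$ (a non-fat) or at $Q_{\mu,X}^{deg}$. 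By (\ref{fact:nonexpansive})(3), the resulting retraction $\varphi^\flat : Q_\mu \to R^\flat$ defined by $\varphi^\flat(p)-p \in (\11,-\11)\RR$ is cyclically nonexpansive.

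Next, define $\psi : R^\flat \cap Q_\mu^+ \to R$ as the identity outside degenerate sets, and on $R^\flat \cap Q_{\mu,X}^{deg}$ as the unique $(\11_{X^c},-\11_{X^r})\RR$-shift landing in $R$; existence and uniqueness follow from the slimness condition on $R$ together with the perturbation analysis in (\ref{fact:movement}). Set $\varphi := \psi \circ \varphi^\flat|_{Q_\mu^+}$. This is a retraction of $Q_\mu^+$ onto $R$ by construction. The terminal condition $\varphi((Q_\mu)_s^+) \subseteq (Q_\mu)_s^+$ holds because $(\11,-\11)$-shifts always preserve $p(s^c)+p(s^r)$, while $(\11_{X^c},-\11_{X^r})$-shifts preserve it for $s \in X$ and do not touch the coordinates for $s \notin X$; and whenever such a shift is applied to a point in $(Q_\mu)_s^+$, one has $p \in Q_{\mu,X}$ with $s \in X$.

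The main obstacle is verifying cyclical nonexpansiveness of $\varphi$, which reduces to that of $\psi$. For a cycle $C=(x_1,\dots,x_n)$ in $Q_\mu^+$ with $q_i := \varphi(x_i) = x_i + \alpha_i(\11,-\11) + \beta_i(\11_{X_i^c},-\11_{X_i^r})$, one uses (\ref{fact:technical})(4) to reduce to bounding $\sum_i D_\infty^+(q_i^c, q_{i+1}^c)$ by $\sum_i D_\infty^+(x_i^c, x_{i+1}^c)$. The scalars $\alpha_i$ telescope as in the standard argument underlying (\ref{fact:nonexpansive})(3). The $\beta_i$-shifts affect only coordinates indexed by $X_i^c$, so the telescoping becomes delicate when the cycle passes through different degenerate sets. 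Here the slimness condition on $R$ --- which forbids strict componentwise comparisons among points of $(Q_{\mu,X}^{deg})^+ \cap R$ --- supplies the crucial coordinatewise inequalities that prevent the $\beta$-contributions from inflating the cycle length. I expect the detailed argument to mirror the proof template of (\ref{fact:nonexpansive})(3), with the cycle decomposed according to the degenerate sets it visits and the $\beta$-terms cancelled segment by segment using slimness; this bookkeeping is the technical heart of the lemma.
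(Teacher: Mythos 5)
There is a genuine gap, and it sits exactly at the point your construction tries to dispose of the \emph{proper fats}. Your plan factors the retraction as ``retract onto an auxiliary balanced section $R^\flat$ by $(\11,-\11)\RR$-shifts, then apply a class-wise shift $\psi$ along $(\11_{X^c},-\11_{X^r})\RR$.'' But fatness is invariant under $(\11,-\11)$-shifts (the graph $K(p)$ is unchanged), so every representative of a proper-fat class that $R^\flat$ must contain is itself a proper fat, and a proper fat is \emph{not} $\sim$-equivalent to any point of $Q_\mu^{slim}$: the extra direction $(\11_{X^c},-\11_{X^r})\RR$ is only granted inside degenerate sets. Concretely, in the left example of Figure~\ref{fig:ex} the proper fats form a two-dimensional region of $\bar Q_\mu$ while $\bar Q_\mu^{slim}$ is a one-dimensional star, so no rule of the form ``choose another representative of the (extended) class'' can map the fat region into $R$. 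Moreover, the ``maximal admissible $(\11_{X^c},-\11_{X^r})$-shift'' you invoke for a proper fat typically exits $Q_\mu$ immediately: up to a global $(\11,-\11)$-shift this move decreases all coordinates on $(S\setminus X)^c$ and increases all of $(S\setminus X)^r$ uniformly, which destroys every tight edge between $X^c$ and $(S\setminus X)^r$, so any row of $S\setminus X$ covered only by $X^c$ becomes isolated and (\ref{fact:technical})~(1') is violated. Hence $\psi$ cannot be defined as claimed, and the composition does not land in $R$.

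The paper avoids this by a construction that is not class-preserving: for each $X\in{\cal S}_0$ it uses the maps $\varphi^c_{X,\epsilon}$ (and $\varphi^r_{X,\epsilon}$) which decrease the coordinates on $(S\setminus X)^c$ within the closure of $(Q_{\mu,X}^+)^c$ and then \emph{re-lift} the $r$-coordinates via $p(t^r)=\max_{s}(\mu(s,t)-q(s^c))$ --- a non-uniform adjustment --- and it processes the sets $X_1,\dots,X_m$ in an inclusion-respecting order so that the image contains no proper fats. Cyclic nonexpansiveness of this flow is not a telescoping of shift scalars; it rests on the key estimate (\ref{clm:key}) (an exact decrease by $\epsilon^*$ in one direction versus an increase by at most $\epsilon^*$ in the other, valid only because the fixed endpoint is not a proper fat relative to a smaller $Y\subset X$) combined with a counting argument over the cycle. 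Your argument only addresses the comparatively easy final step (normalizing within degenerate sets, the paper's $\varphi_R^X$), and the essential mechanism for eliminating proper fats, together with its nonexpansiveness proof, is missing and cannot be recovered from the $\sim$-equivalence alone.
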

\begin{figure}
\center
\epsfig{file=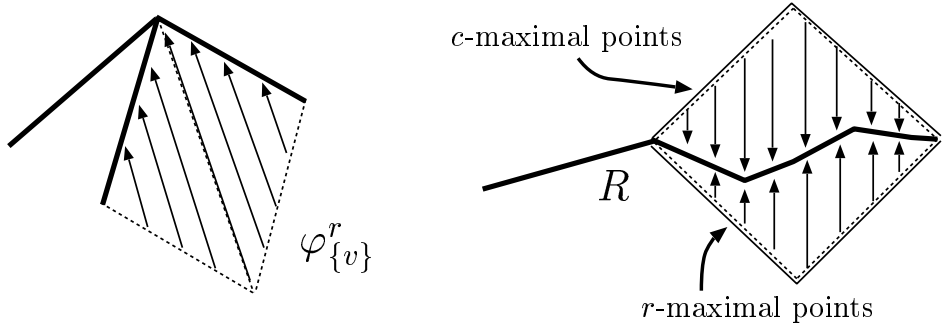,width=0.9\hsize}
\caption{Retraction from $Q_{\mu}^+$ to a slimmed section}
\label{fig:retract2}
\end{figure}
The proof of this lemma 
is given in the end of this subsection.
Assuming Lemma~\ref{lem:slimmed}, 
we prove Theorem~\ref{thm:R-dual}.
Let $S^*$ be the set of proper terminals.
Take an optimal map $\rho: VG \to Q_{\mu}^+$ 
for FLP. 
As in the proof of Theorem~\ref{thm:T-dual}, 
there are cycles $C_i$ 
and $S^*$-paths $P_j$ 
such that (\ref{eqn:CiPj}) holds.
Take a cyclically nonexpansive retraction $\varphi$ 
in Lemma~\ref{lem:slimmed}.
Then $\varphi \circ \rho: VG \to R$ 
is feasible to FLP with $R$.
Since $\varphi$ is cyclically nonexpansive,  
$D_{\infty}(\varphi \circ \rho (C_i)) \leq D_{\infty}(\rho(C_i))$ 
and $D_{\infty}(\varphi \circ \rho (P_j)) \leq D_{\infty}(\rho(P_j))$, 
where the second inequality follows from the fact 
that $\varphi$ is identity on $R_s$ 
for each proper terminal $s \in S^*$.
Thus $\varphi \circ \rho$ and $\rho$ have the same objective value.
The statement (2) follows from (1) and (\ref{fact:nonexpansive})~(3).

Figure~\ref{fig:retract2} illustrates 
cyclically nonexpansive retractions 
in the examples of Figure~\ref{fig:ex}.
Again the 2-commodity tight span in Figure~\ref{fig:2commodity}
has no fat; the region contraction in FLP does not 
occur even if the totally Eulerian condition is imposed.

As a corollary, 
we obtain topological 
properties of slimmed sections: 
\begin{Cor}\label{cor:topology}
Let $R \subseteq Q_{\mu}$ be a slimmed section.
\begin{itemize}
\item[{\rm (1)}] $R$ is contractible and geodesic, and so is $R_s$ for $s \in S$.
\item[{\rm (2)}] If $R \subseteq Q_{\mu}^+$, then $\mu(s,t) = D_{\infty}(R_s,R_t)$ for $s,t \in S$.
\end{itemize}
\end{Cor}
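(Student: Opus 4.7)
The plan is to transfer the geometric properties of $Q_\mu^+$ to $R$, and to the slices $R_s$, via the cyclically nonexpansive retraction supplied by Lemma~\ref{lem:slimmed}. I will work in the case $R \subseteq (Q_\mu^{slim})^+$; the general case $R \subseteq Q_\mu^{slim}$ then follows because, by the definition of slimmed section, $R$ and some positive-part slimmed section $R'$ agree after taking the quotient by $(\11,-\11)\RR$, and the relevant geometric statements descend from $R'$ to $R$.

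First I show that $R$ is geodesic. Fix $p, q \in R$, and use fact~(\ref{fact:geodesic}) to pick a geodesic $\sigma: [0,1] \to Q_\mu^+$ of length $D_{\infty}(p,q)$ from $p$ to $q$. Apply the retraction $\varphi: Q_\mu^+ \to R$ of Lemma~\ref{lem:slimmed} to obtain a path $\varphi \circ \sigma$ in $R$ joining $\varphi(p) = p$ and $\varphi(q) = q$. For any partition $0 = t_0 < \cdots < t_n = 1$, close the points $\sigma(t_i)$ into a cycle; its length in $Q_\mu^+$ equals $\sum_i D_{\infty}(\sigma(t_i), \sigma(t_{i+1})) + D_{\infty}(q, p)$, and the image cycle under $\varphi$ has length $\sum_i D_{\infty}(\varphi\sigma(t_i), \varphi\sigma(t_{i+1})) + D_{\infty}(q, p)$ since $\varphi$ fixes $p$ and $q$. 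Cyclic nonexpansiveness cancels the shared $D_{\infty}(q, p)$ and yields
\[
\sum_{i} D_{\infty}(\varphi\sigma(t_i), \varphi\sigma(t_{i+1})) \leq \sum_{i} D_{\infty}(\sigma(t_i), \sigma(t_{i+1})),
\]
so taking the supremum over partitions gives $\mathrm{length}(\varphi \circ \sigma) \leq D_{\infty}(p, q)$; combined with the triangle inequality, equality must hold, proving $R$ is geodesic.

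For $R_s$ I run the same cycle argument, after first identifying $R_s$ with $(T_\mu)_s$. Lemma~\ref{lem:slimmed} guarantees $\varphi$ preserves $(Q_\mu)_s^+$, and $(T_\mu)_s$ is itself geodesic as the image of the convex polyhedron $(P_\mu^+)_s$ under the nonexpansive retraction $\phi$ of fact~(\ref{fact:nonexpansive})~(1). Contractibility of $R$ and $R_s$ then follows from the chain of continuous retractions $P_\mu^+ \to T_\mu \to Q_\mu^+ \to R$ (fact~(\ref{fact:nonexpansive})~(1), (2), Lemma~\ref{lem:slimmed}) and its restriction $(P_\mu^+)_s \to (T_\mu)_s = R_s$, since $P_\mu^+$ and $(P_\mu^+)_s$ are convex polyhedra, hence contractible.

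Part (2) is then immediate: given $R_s = (T_\mu)_s$, fact~(\ref{fact:embedding})~(2) yields $D_{\infty}(R_s, R_t) = D_{\infty}((T_\mu)_s, (T_\mu)_t) = \mu(s, t)$. The main obstacle, pervading both parts, is the identification $R_s = (T_\mu)_s$: fact~(\ref{fact:embedding})~(1) is stated for ordinary balanced sections of $Q_\mu^+$ (which surject onto $\bar Q_\mu$), whereas a slimmed section surjects only onto $\bar Q_\mu^{slim}$. I expect the equality to come from a combinatorial analysis of the graph $K(p)$ at $p \in (T_\mu)_s$, verifying that $p$ is never a proper fat and that the slimming equivalence $\sim$ is trivial along $(T_\mu)_s$, so $(T_\mu)_s$ embeds into $Q_\mu^{slim}$ and projects bijectively into $\bar Q_\mu^{slim}$, forcing $(T_\mu)_s \subseteq R$.
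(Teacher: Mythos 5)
Your argument for $R$ itself in part (1) is fine and is essentially the paper's: $R$ is a continuous (cyclically nonexpansive) retract of the contractible, geodesic set $P_{\mu}^{+}$, and the cycle trick transfers geodesics. The problem is the identification $R_s=(T_{\mu})_s$, on which your treatment of $R_s$ and your entire proof of part (2) rest. That identity is the content of~(\ref{fact:embedding})~(1) for \emph{balanced} sections of $Q_{\mu}^{+}$, and it genuinely fails for slimmed sections: the whole point of slimming is to delete the proper fats and collapse the degenerate sets, and these can lie inside $(Q_{\mu})_s^{+}=(T_{\mu})_s$ whenever the terminal $s$ is not proper. In the left example of Figure~\ref{fig:ex}, every interior point of the two triangles is a proper fat relative to $\{v\}$ and belongs to $(Q_{\mu})_v^{+}$, so $(T_{\mu})_v$ is two-dimensional while $R_v$ is a union of segments; your hoped-for verification that points of $(T_{\mu})_s$ are never proper fats is exactly what fails here (this is also why the paper needs the notion of a ``proper terminal'' at all). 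Consequently your deduction of part (2) from~(\ref{fact:embedding})~(2) collapses, and your geodesicity/contractibility claim for $R_s$ is not established as written.

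The statement about $R_s$ in part (1) is repairable with ingredients you already mention: by Lemma~\ref{lem:slimmed} the retraction $\varphi$ maps $(Q_{\mu})_s^{+}$ into itself, hence into $R\cap(Q_{\mu})_s^{+}=R_s$, so composing with $P_{\mu}^{+}\to T_{\mu}\to Q_{\mu}^{+}$ exhibits $R_s$ as a cyclically nonexpansive retract of the face $(P_{\mu}^{+})_s$ of $P_{\mu}^{+}$; contractibility and geodesicity then follow exactly as for $R$, with no need to compare $R_s$ with $(T_{\mu})_s$. Part (2), however, needs a different idea, and the paper gets it from the duality just proved: apply Theorem~\ref{thm:R-dual} to the totally Eulerian network on $\{s,t\}$ with $c(st)=c(ts)=1$, whose multiflow value is $\mu(s,t)+\mu(t,s)$; this produces $(p,q)\in R_s\times R_t$ with $D_{\infty}(p,q)+D_{\infty}(q,p)=\mu(s,t)+\mu(t,s)$, and the pointwise bound~(\ref{clm:RsRt}) (valid because $R\subseteq Q_{\mu}^{+}$) forces $D_{\infty}(p,q)=\mu(s,t)$ and $D_{\infty}(q,p)=\mu(t,s)$, giving $D_{\infty}(R_s,R_t)=\mu(s,t)$. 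Without some argument of this kind (or another substitute for the false identity $R_s=(T_{\mu})_s$), your proposal does not prove (2).
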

\begin{proof}
(1). A cyclically nonexpansive map is continuous 
in the Euclidean topology~\cite[Remark 2.4]{HK10distance}.
So $R$ is homotopy equivalent to convex set $P_{\mu}^+$, 
which is contractible.
Since $P_{\mu}^+$ is geodesic, 
so is $R$; see \cite[Section 2.3]{HK10distance}.
Since $R_s$ is a retract of a face of $P_{\mu}^+$, 
it is contractible and geodesic by the same argument.

(2).
Consider the Eulerian network $(G,S,c)$ 
such that $c(st) = c(ts) = 1$ 
and the other capacities are zero.
Obviously ${\rm MFP}^*(G,S,c) = \mu(s,t) + \mu(t,s)$.
By ${\rm MFP}^*(\mu;G,S,c) = {\rm FLP}^*(R;G,S,c)$, 
there is $(p,q) \in R_s \times R_t$ 
with $D_{\infty}(p,q) + D_{\infty}(q,p) = \mu(s,t) + \mu(t,s)$.
Necessarily $D_{\infty}(p,q) = \mu(s,t)$ and 
$D_{\infty}(q,p) = \mu(t,s)$ by  (\ref{clm:RsRt}).
\end{proof}
\paragraph{Proof of Lemma~\ref{lem:slimmed}.}
In the proof, 
we denote by ${Q_{\mu}}^c$ and ${Q_{\mu}}^r$ the projections 
of $Q_{\mu}$ to  $\RR^{S^c}$ and $\RR^{S^r}$, respectively.
These projections are bijective
and isometric by (\ref{fact:technical})~(4).
For $q \in {Q_{\mu}}^c$, 
we can lift $q$ to $p \in Q_{\mu}$ with $p^c = q$
by $p(t^r) = \max_{s^c \in S^c} (\mu(s,t) - q(s^c))$. 

We remark that 
$p(s^r) = p(s^c) = 0$
for $p \in Q_{\mu,X}^+$ and $s \in X \in {\cal S}_0$.
Let $B$ be any balanced section in $Q_{\mu}^+$.
By  $Q_{\mu,X} \subseteq \bigcap_{s \in X} (Q_{\mu})_s$ and (\ref{fact:embedding})~(1), 
$B$ includes $Q_{\mu,X}^+$ 
for all $X \in {\cal S}_0$.
One can verify from (\ref{fact:movement}) that
a point $p \in Q_{\mu,X}^+$ is a fat 
if and only if 
for small $\epsilon > 0$
we have
$p^c - \epsilon \11_{(S \setminus X)^c} \in (Q_{\mu,X}^+)^c$ 
or 
$p^r - \epsilon \11_{(S \setminus X)^r} \in (Q_{\mu,X}^+)^r$
(use the property that any node $u$ 
with $p(u) = 0$ is incident to $X^{cr}$ by nonnegativity of $\mu$).

Based on this, for $\epsilon \geq 0$, 
consider the map $\varphi_{X,\epsilon}^c$ on $B$
obtained by the following process.
For each $p \in Q_{\mu,X}^+$,
add $- \epsilon^* \11_{(S \setminus X)^c}$ to $p^c$, 
where $\epsilon^*$ is the maximum nonnegative real 
in $[0,\epsilon]$ such that
$p^c - \epsilon^* \11_{(S \setminus X)^c}$ 
belongs to the closure of $(Q_{\mu,X}^+)^c$.
Then 
lift the resulting 
point $q$ to $p' \in Q_{\mu,X}^+$ 
with $(p')^c = q$, 
and define $\varphi_{X, \epsilon}^c(p) := p'$.
Extend $\varphi_{X,\epsilon}^c$ to a map $B \rightarrow B$ 
by defining it to be identity on the points not in $Q_{\mu,X}^+$.

In above $p,\epsilon, \epsilon^*$,
the following key property holds:
\begin{myitem}
For $q \in B$ with $q = \varphi_{X, \epsilon}^c(q)$, 
if $q$ is not a proper fat relative 
to any proper subset $Y \subset X$, 
then we have 
\begin{eqnarray*}
D_{\infty}(\varphi_{X, \epsilon}^c (p, q)) & \leq &
D_{\infty} (p ,q) + \epsilon^*, \\
D_{\infty}(\varphi_{X, \epsilon}^c (q, p)) & = & 
D_{\infty} (q, p) - \epsilon^*. 
\end{eqnarray*} \label{clm:key}
\end{myitem}\noindent \vspace{-1cm}
\begin{proof}
By (\ref{fact:technical})~(4), we may consider $p^c,q^c, D_{\infty}^+$ 
instead of $p,q, D_{\infty}$.
The first relation is obvious 
from definition of $D_{\infty}^+$; see \cite[p. 8]{HK10distance}.
We show the second.
We claim 
that $K(q)$ has an edge $s^ct^r$ joining 
$S^c \setminus X^c$ and $X^r$.
Suppose not. 
Then $X^c \supseteq N_q(X^r)$.
Since $X \in {\cal S}_0$, $\mu(s,t) = 0$ holds for 
all $(s^c,t^r) \in  N_q(X^r) \times X^r$.
Hence
we have
$q(u) = 0$ for $u \in N_q(X^r) \cup X^r$ and
$q(s^c) > 0$ for $s^c \in S^c \setminus X^c$.
Since $q = \varphi_{X,\epsilon}^c(q)$, 
necessarily $X^c \supset N_q(X^r)$ (proper inclusion), 
and $q$ is a proper fat relative to $Y$ 
with $Y^c = N_q(X^r) \subset X^c$; a contradiction.

For an edge $s^ct^r \in EK(q)$ joining 
$S^c \setminus X^c$ and $X^r$, 
we have
$\epsilon^* \leq p(s^c) + p(t^r) - \mu(s,t) 
 = p(s^c) + p(t^r) - q(s^c) - q(t^r)
 = p(s^c) - q(s^c) - q(t^r)$,
where we use $q(s^c) + q(t^r) = \mu(s,t)$
by $s^ct^r \in EK(q)$ and $p(t^r) = 0$. 
Therefore we have ($*$)
$p(s^c) - q(s^c) \geq \epsilon^*$.
Thus
we have
\begin{eqnarray*}
D_{\infty}^+(q^c, p^c - \epsilon^* \11_{(S \setminus X)^c} ) 
& = & \|(p^c - \epsilon^* \11_{(S \setminus X)^c} - q^c)_+ \|_{\infty} \\
& = & 
\max_{t^c \in (S \setminus X)^c} 
\{ (p(t^c) - \epsilon^* - q(t^c) )_+ \}
= D_{\infty}^+(q^c, p^c) - \epsilon^*,
\end{eqnarray*}
where the second equality uses $p(t^c) = 0$ 
for all $t^c \in X^c$ 
and the third uses ($*$).
\end{proof}
We can define $\varphi_{X,\epsilon}^r: B \to B$ 
by changing roles of $c$ and $r$, and an analogous property holds.
Let $\varphi_{X}^c := 
\lim_{\epsilon \rightarrow \infty} \varphi_{X, \epsilon}^c$ and 
$\varphi_{X}^r := 
\lim_{\epsilon \rightarrow \infty} \varphi_{X, \epsilon}^r$
(well-defined).
Next we study the image of $\varphi_X^c$.
Let $p^* := \varphi_X^c(p)$ for $p \in Q_{\mu,X}^+$.
Then 
$K(p^*)$ necessarily 
has an edge joining $S^c \setminus X^c$ and $X^r$.
Therefore if $p^*$ is a fat, 
then it is a fat relative to $Y \supset X$ (proper inclusion), 
or $p^*$ is {\em $r$-maximal} in $(Q_{\mu,X}^{deg})^+$ 
in the sense that 
$p^* - \epsilon (\11_{(S \setminus X)^c}, -\11_{(S \setminus X)^r}) 
\not \in (Q_{\mu,X}^{deg})^+$ for every $\epsilon > 0$.
Also if $p$ belongs to $(Q_{\mu,X}^{deg})^+$, 
then $p^*$ is an  $r$-maximal point 
with $p - p^* \in (\11_{(S \setminus X)^c}, - \11_{(S \setminus X)^r})\RR$; 
see the right of Figure~\ref{fig:retract2}.
Again 
an analogous property holds for $\varphi_X^r$
by changing roles of $r$ and $c$.
Let $\varphi_X := \varphi^r_X \circ \varphi^c_X$.
Then the image $\varphi_X (B)$ does not 
contain a proper fat relative to $X$.

Let $B^{slim}_c$ be the subset of $B$ obtained 
by deleting all proper fats and 
replacing each $(Q_{\mu,X}^{deg})^+$ by the set of 
its $c$-maximal points.
Order all subsets $X_1,X_2,\ldots, X_m$ 
in ${\cal S}_0$
so that $X_i \subseteq X_j$ implies $i \leq j$.
Then the composition
$\varphi := \varphi_{X_m} \circ \varphi_{X_{m-1}} \circ \cdots \circ \varphi_{X_1}$ is 
a  retraction from $B$ to $B^{slim}_c$.

We show that $\varphi : B \to B^{slim}_c$ 
is cyclically nonexpansive.
Let $X := X_i$.
Take a cycle $C$ in 
$\varphi_{X_{i-1}} \circ \varphi_{X_{i-2}} 
\circ \cdots  \circ \varphi_{X_1} (B)$.
We prove that 
$g(\epsilon) := D_{\infty}(\varphi_{X,\epsilon}^c(C)) - D_{\infty}(C)$ 
is a monotone nonincreasing function.
It suffices to show 
$g(\epsilon) \leq 0$
for small $\epsilon > 0$.
By construction, 
$C$ does not contain 
proper fats relative to any $Y \subset X$.
By (\ref{clm:key}),  
for a consecutive pair $(p,q)$ in $C$ we have
\begin{equation}
D_{\infty} (\varphi_{X,\epsilon}^c (p,q))
- D_{\infty}(p,q) \left\{
\begin{array}{ll}
\leq \epsilon & {\rm if}\ p \neq \varphi_{X}^c(p), q =  \varphi_{X}^c(q), \\
= - \epsilon & {\rm if}\ p = \varphi_{X}^c(p), 
q \neq  \varphi_{X}^c(q),  \\
= 0 & {\rm otherwise}.
\end{array} \right.
\end{equation}
Since the number of 
consecutive pairs $(p,q)$ 
with $p \neq \varphi_{X}^c(p), q =  \varphi_{X}^c(q)$ 
is equal to that with 
$p = \varphi_{X}^c(p), q  \neq  \varphi_{X}^c(q)$, 
summing up (\theequation) over all consecutive pairs yields  
$g(\epsilon) \leq 0$.
The argument for $\varphi_{X}^r$ is similar.
Thus $\varphi$ is cyclically nonexpansive.

We next verify that $B^{slim}_c$ is slimmed. 
Indeed, take an arbitrary pair $p,p'$ of 
$c$-maximal points in $(Q_{\mu,X}^{deg})^+$.
Then there are edges
$s^ct^r  \in EK(p), \tilde s^c \tilde t^r \in EK(p')$
joining $X^c$ and $S^r \setminus X^r$.
Hence 
$p'(s^c) + p'(t^r) = p'(t^r) \geq \mu(s,t) = p(t^r)$, 
and $p(\tilde t^r) \geq \mu(\tilde s,\tilde t) = p'(\tilde t^r)$. 

Finally we construct 
a cyclically nonexpansive retraction from $Q_{\mu}^+$ 
to any slimmed section.
Any slimmed section $R$ in $(Q_{\mu}^{slim})^+$ is 
obtained from a balanced section $B$ by deleting all proper fats 
and replacing each $(Q_{\mu,X}^{deg})^+$ by a subset $R_X$ 
with the properties that
(i) there is no pair $p,q \in R_X$ with 
$p(s^c) < q(s^c)$ for all $s^c \in S^c \setminus X^c$ or 
$p(s^r) < q(s^r)$ for all $s^r \in S^r \setminus X^r$, and   
(ii) for each $p' \in (Q_{\mu,X}^{deg})^+$ 
there uniquely exists $p \in R_X$ 
with $p - p' \in (\11_{(S \setminus X)^c}, -\11_{(S \setminus X)^r})\RR$.
It suffices to give a cyclically nonexpansive 
map from $B_{c}^{slim}$ to $R$.
For each $X \in {\cal S}_0$, 
we can define a map $\varphi_R^X$ on $B^{slim}_c$ as: 
For each $p \in (Q_{\mu,X}^{deg})^+$,  
define $\varphi_R^X (p)$ to be the point $p'$ in $R_X$
determined by the relation 
$p' - p \in (\11_{(S \setminus X)^c}, -\11_{(S \setminus X)^r})\RR$, 
and to be identity on the other points.
So it suffices to prove that $\varphi_R^X$ 
is cyclically nonexpansive; consider 
the composition of $\varphi_R^X$ for all $X \in {\cal S}_0$.
One can verify this fact in 
the essentially same way as above.
The projection of $(Q_{\mu,X}^{deg})^+$ to $\RR^{(S \setminus X)^{cr}}$
is isometry, and the image of $R_X$ is 
a balanced set in $\RR^{(S \setminus X)^{cr}}$. 
So we can apply the method in the proof of 
\cite[Lemma 2.7]{HK10distance}; the details are left to readers.

\section{Integrality}\label{sec:integrality}
The geometry of $T_{\mu}$ 
and $\bar Q_{\mu}^{slim}$ crucially 
affects the integrality of $\mu$-MFP.
The dimension of $T_{\mu}$ is defined by 
the largest dimension of faces of $T_{\mu}$.
The dimension of $\bar Q_{\mu}^{slim}$
is defined by the largest dimension 
of faces $F$ of $Q_{\mu}^{slim}$
in modulo $\sim$; 
intuitively, it is the dimension of its slimmed section.
The main goal of this section is to
prove the following integrality theorem:
\begin{Thm}\label{thm:integrality}
Let $\mu$ be a directed distance on $S$.
\begin{itemize}
\item[{\rm (1)}] If $\dim T_{\mu} \leq 1$, then  
$\mu$-MFP has an integral optimal multiflow for every network $(G,S,c)$.
\item[{\rm (2)}] If $\dim \bar Q_\mu^{slim} \leq 1$, then
$\mu$-MFP has an integral optimal multiflow for 
every properly inner Eulerian network 
$(G,S,c)$ (relative to $\mu$).
\end{itemize}
\end{Thm}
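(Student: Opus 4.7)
The plan is to combine the strong duality of Theorems~\ref{thm:T-dual} and \ref{thm:R-dual} with the integrality of minimum-cost flow LPs on an auxiliary network. For part (1), Theorem~\ref{thm:T-dual}~(1) gives ${\rm MFP}^*(\mu;G,S,c)={\rm FLP}^*(T_\mu;G,S,c)$, and by (\ref{fact:technical})~(5) the target $T_\mu$ is a path isometric to a segment $[0,L]\subset(\RR, D_\infty^+)$ in which each $(T_\mu)_s$ is a subsegment. For part (2), Theorem~\ref{thm:R-dual}~(1) reduces the problem to ${\rm FLP}^*(R;G,S,c)$ for any slimmed section $R\subseteq(Q_\mu^{slim})^+$; when $\dim\bar Q_\mu^{slim}\leq 1$, Corollary~\ref{cor:topology}~(1) together with (\ref{fact:technical})~(3) makes $R$ a finite tree whose edges are each isometric to a segment in $(\RR, D_\infty^+)$ and whose terminal regions $R_s$ are subtrees.

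Next I would formulate the resulting FLP on this tree $T$ as a linear program. After subdividing $T$ at its branch points and at the boundaries of the terminal regions, and orienting each resulting edge in its ``$+$'' direction, a map $\rho$ is encoded by coordinates $\xi_e(x)\in[0,\ell_e]$ together with an indicator recording which edge contains $\rho(x)$. For each pair $xy\in EG$ and each tree edge $e$, introduce a slack $z_e(xy)\ge 0$ constrained to be at least the forward displacement along $e$ from $\rho(x)$ to $\rho(y)$, so that $D_\infty(\rho(x),\rho(y))=\sum_e z_e(xy)$ at optimum. The resulting constraints (position on $T$, membership in $R_s$, and the lower bounds on $z_e$) correspond to the node--arc incidence structure of an auxiliary digraph built from $G$ and the oriented $T$, and are therefore totally unimodular.

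Passing to the LP dual then produces a minimum-cost flow problem whose integer optimum, guaranteed by total unimodularity, equals ${\rm MFP}^*(\mu;G,S,c)$ by the two duality theorems. Decomposing this integer flow into paths and cycles yields an integer multiflow in $(G,S,c)$ of value ${\rm MFP}^*$. For (1) this decomposition works verbatim in any network. For (2), the properly inner Eulerian hypothesis supplies exactly the flow-conservation condition at non-proper-terminal nodes needed so that the decomposition splits the integer flow into $S^*$-paths joining proper terminals (plus zero-cost cycles), which can be lifted back to an integer multiflow in $(G,S,c)$ of the required value.

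The main obstacle is the LP bookkeeping in the second step: when $T$ genuinely branches (the case needed for (2)), the linearization $D_\infty(\rho(x),\rho(y))=\sum_e z_e(xy)$ must be tight at optimum, and the ``$\rho(x)\in T$'' constraint must be encoded in a totally unimodular way. I expect this to be handled by rooting $T$ at a point in some $R_s$ and invoking the cyclically nonexpansive retraction of Lemma~\ref{lem:slimmed} together with Corollary~\ref{cor:topology} to reduce attention to ``monotone'' configurations along root-paths, on which the linearization is automatic; the proper inner Eulerian hypothesis is then exactly what is required for the dual min-cost flow to decompose cleanly into $S^*$-paths, completing the integer multiflow construction.
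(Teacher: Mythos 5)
Your reduction of the geometry matches the paper: for (1) you use (\ref{fact:technical})~(5) to make $T_\mu$ a segment, and for (2) you use Theorem~\ref{thm:R-dual}, Corollary~\ref{cor:topology} and (\ref{fact:technical}) to make a slimmed section $R$ a tree of segments with the $R_s$ as subtrees. For (1) your plan of passing to a min-cost flow is essentially the paper's reduction to minimum-cost circulation, but even there you skip a needed verification: an integral optimal circulation decomposes into cycles that may contain two or more terminal edges, and one must check (the paper's inequality (\ref{eqn:cost<=value})) that splitting such a cycle into its constituent $S$-paths does not lose flow value relative to the cycle's cost; ``decomposing into paths and cycles'' is not verbatim.

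The genuine gap is in part (2). You propose to encode the facility location problem on the branching tree as an LP with a totally unimodular constraint matrix, dualize to a min-cost flow on an auxiliary digraph, and ``lift back'' its integral optimum to an integral multiflow, invoking the properly inner Eulerian hypothesis only at that last lifting step. Neither half of this is substantiated. You yourself flag that the TU encoding of ``position on a branching tree'' together with tightness of $D_\infty(\rho(x),\rho(y))=\sum_e z_e(xy)$ is only ``expected''; no such encoding is given, and the FLP on a branching tree is not the dual of a single-commodity flow in any obvious way. More fundamentally, when the tree branches the weighted multiflow problem is genuinely multicommodity (e.g.\ the all-one weight on three terminals, the Lomonosov--Frank case), and integrality of optimal multiflows is simply false without the Eulerian condition there (only half-integrality holds); hence no proof can succeed in which the Eulerian hypothesis enters only as bookkeeping in a final decomposition — it must do real combinatorial work, and your proposal gives no argument for that step. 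The paper fills exactly this hole by a different route: it first shows the FLP optimum is attained at vertices of the tree (the averaging/perturbation argument giving (\ref{eqn:minmax})), deduces the cut-decomposition formula (\ref{eqn:mincut}), which forces every optimal multiflow to be simultaneously a maximum flow for each partial cut $(A_e,B_e)$, and then proves integrality by induction via splitting-off consecutive edge pairs, using (\ref{eqn:mincut}) to show splitting-off never decreases the optimum. Some argument of this kind is required; as written, your proof of (2) does not go through.
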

The first statement (1) is reducible 
to the minimum cost circulation.
So we mainly concentrate on the second statement (2) 
and its consequences.
The rest of this section 
is organized as follows.
In next Section~\ref{subsec:prelim}, 
we give basic definitions for cuts, 
cut distances, and oriented-tree realizations.
Then, in Section~\ref{subsec:proof},
we prove Theorem~\ref{thm:integrality}~(2).
In Section~\ref{subsec:combinatorial}, 
we give a useful ``combinatorial version'' of  
Theorem~\ref{thm:integrality}~(2), and 
derive (slight) extensions of
Lomonosov-Frank 
theorem for directed free multiflows 
and Ibaraki-Karzanov-Nagamochi's 
directed version of 
the multiflow locking theorem.
In Section~\ref{subsec:reduction}, 
we prove (1) by a reduction 
to the minimum cost circulation.

\subsection{Preliminary: partial cuts, 
cut distances, and oriented trees}
\label{subsec:prelim}
A {\em partial cut} on a set $S$ 
is an ordered pair $(A,B)$ 
of disjoint subsets $A,B \subseteq S$. 
We particularly call $(A,B)$ a {\em cut} if
$A \cup B = S$.
For a partial cut $(A,B)$ on $S$,
the {\em cut distance} $\delta_{A,B}: S \times S \to \RR_+$
is defined by
\[
\delta_{A,B}(s,t) =
\left\{
\begin{array}{ll}
1 & {\rm if}\ (s,t) \in A \times B, \\
0 & {\rm otherwise,}
\end{array} \right. \quad (s,t \in S).
\]
In a network $(G,S,c)$, 
for a node subset $X \subseteq VG$, 
let $\partial X$ denote the set of edges leaving $X$.
For a partial cut $(A,B)$ on $S$,
the following relation is nothing but 
the max-flow min-cut theorem:
\begin{equation}\label{eqn:mfmc}
{\rm MFP}^*(\delta_{A,B};G,S,c) = \min \{ c(\partial X) \mid A \subseteq X \subseteq VG \setminus B\}.
\end{equation}
An {\em oriented tree} $\mit\Gamma$ 
is a directed graph 
whose underlying undirected graph is a tree. 
For a nonnegative edge length 
$\alpha: E\mit\Gamma \to \RR_+$, 
we define directed metric $D_{\mit\Gamma,\alpha}$ on $V\mit\Gamma$
as follows.
For two nodes $u,v$, 
the distance $D_{\mit\Gamma,\alpha}(u,v)$ is defined 
by the sum of edge-length $\alpha(e)$ 
over edges $e= pq$ 
such that 
the unique walk from $u$ to $v$ 
passes through $pq$ 
in order $u \rightarrow p \rightarrow q \rightarrow v$.
Namely $D_{\mit\Gamma,\alpha}$ does not count 
the edge-length of edges with the opposite direction.
A {\em subtree} of $\mit\Gamma$ is a subgraph 
whose underlying undirected graph is a tree.
For a directed distance $\mu$ on $S$, 
an {\em oriented-tree realization} $(\mit\Gamma,\alpha; \{F_s\}_{s \in S})$
is a triple of an oriented tree $\mit\Gamma$, a nonnegative edge-length $\alpha$, 
and a family $\{F_s\}_{s \in S}$ of subtrees indexed by $S$ such that
\[
\mu(s,t) = D_{\mit\Gamma,\alpha}(F_s,F_t) \quad (s,t \in S).
\]
Deletion of an edge $e = uv$ in $\mit\Gamma$
decomposes $\mit\Gamma$ 
into two connected components 
$\mit\Gamma_{e}', \mit\Gamma_{e}''$ 
so that $\mit\Gamma_{e}'$ contains $u$.
This yields a partial cut
$(A_e,B_e)$ of $S$ 
by
$A_{e} := \{ s \in S \mid
\mbox{$F_s$ belongs to $\mit\Gamma_{e}'$} \}$ 
and 
$B_{e} := \{ s \in S \mid
\mbox{$F_s$ belongs to $\mit\Gamma_{e}''$} \}$. 
From definition of $D_{{\mit\Gamma},\alpha}$, 
one can easily see
\begin{equation}\label{eqn:cutrep}
\mu = \sum_{e \in E\mit\Gamma} \alpha(e) \delta_{A_e,B_e}.
\end{equation}

\subsection
{Proof of Theorem~\ref{thm:integrality}~(2)}\label{subsec:proof}
Suppose $\dim \bar Q_{\mu}^{slim} \leq 1$.
Then we can take a slimmed section 
$R$ represented as  
a union of
one-dimensional faces of $(Q_{\mu}^{slim})^+$; 
see the proof of Lemma~\ref{lem:slimmed}.
By (3) and (4) in (\ref{fact:technical}), 
each segment in $R$ is isometric to 
a segment in $(\RR, D_{\infty}^+)$.
Since $R$ is contractible (Corollary~\ref{cor:topology}), 
the 1-skeleton graph $\mit\Gamma$ of $R$ is a tree.
Orient 
this $1$-skeleton graph $\mit\Gamma$ so  
that for each edge $pq$ (segment $[p,q]$),
$p$ is oriented to $q$ $\Leftrightarrow$ 
$D_{\infty}(p,q) > 0$ (and $D_{\infty}(q,p) = 0$). 
Also let $\alpha(pq) := D_{\infty}(p,q)$ 
for (oriented) edge $pq \in E\mit\Gamma$.
Then we obtain an oriented tree $\mit\Gamma$ 
with edge-length $\alpha$.
Let ${\rm Vert} R$ be the set of 
vertices (endpoints of segments) of $R$. 
Since $R$ is geodesic (Corollary~\ref{cor:topology}~(1)),
$({\rm Vert} R, D_{\infty})$
is isometric to $(V\mit\Gamma, D_{\mit\Gamma,\alpha})$.
For $s \in S$, 
let $F_s$ be the subgraph induced by $R_s$ 
(well-defined since $R_s$ is a subcomplex of $R$).
Since $R_s$ is also contractible (Corollary~\ref{cor:topology}~(1)),
$F_s$ is a subtree.
Summarizing these facts together with Corollary~\ref{cor:topology}~(2),
we can conclude that $(\mit\Gamma, \alpha; \{F_s\}_{s\in S})$
is an oriented-tree realization of $\mu$.

{\bf I.} We first prove the following min-max relation:
\begin{eqnarray}\label{eqn:minmax}
&& {\rm MFP}^*(\mu; G,S,c) = {\rm FLP}^*({\rm Vert} R; G,S,c) \\
&& = \min 
\left\{ \sum_{xy \in EG} 
c(xy) D_{{\mit\Gamma},\alpha}(\rho(x),\rho(y)) \bigmid  
\rho:VG \to V\mit\Gamma, 
\rho(s) \in VF_s\ (s \in S)
\right\}. \nonumber
\end{eqnarray}
This means that FLP becomes a {\em discrete location problem} 
on ${\mit \Gamma}$.
By construction of $\mit\Gamma$,
it suffices to show the first equality, that is, 
there is an optimal map $\rho^*: VG \to R$
for FLP with $\rho^*(VG) \subseteq {\rm Vert} R$.
Take any optimal map $\rho: VG \to R$.
Suppose that there  
is an interior point $p^*$ of some segment 
$[p,q]$ in $R$ with $\rho^{-1}(p^*) \neq \emptyset$.
Take a sufficiently small positive $\epsilon > 0$.
Increase $\epsilon$ until 
$p^* + \epsilon (p-q) = p$ or 
$p^* - \epsilon (p-q) = q$ or
$\rho^{-1}(p^* - \epsilon(p-q)) \neq \emptyset$  
or $\rho^{-1}(p^* + \epsilon(p-q)) \neq \emptyset$. 
Let $\rho_+, \rho_-: VG \to R$ be defined by
\[
\rho_{\pm}(x)
= \left\{ 
\begin{array}{ll}
p^* \pm \epsilon (p - q) & {\rm if}\ \rho(x) = p^*, \\
\rho(x) & {\rm otherwise,}
\end{array} \right. \quad (x \in VG).
\]
Then both $\rho_+$ and $\rho_-$ are feasible.
Since 
$D_{\infty}(p,q) = D_{\infty}(p,r) + D_{\infty}(r,q)$ 
for $r \in [p,q]$
and $R$ is geodesic,  
the following holds:
\[
D_{\infty}(\rho(x),\rho(y)) 
= \frac{ 
D_{\infty}(\rho_+(x), \rho_+(y)) 
+ D_{\infty}(\rho_-(x), \rho_-(y))}
{2} \quad (x,y \in VG).
\]
Therefore both $\rho_+$ and $\rho_-$ are optimal.
For at least one of $\rho_+, \rho_-$, 
say $\rho_+$, 
the number of points 
$p \in R \setminus {\rm Vert} R$ 
with $(\rho_+)^{-1}(p) \neq \emptyset$
decreases.
Let $\rho:= \rho_+$. 
We can
repeat this procedure 
until $\rho(VG) \subseteq {\rm Vert} R$. 
This proves claim~(\ref{eqn:minmax}).

{\bf II.} Second we derive 
the following min-cut expression: 
\begin{equation}\label{eqn:mincut}
{\rm MFP}^*(\mu; G,S,c) = 
\sum_{e \in E\mit\Gamma} 
\alpha(e) \min \{c(\partial X) \mid A_e \subseteq X \subseteq  VG \setminus B_e \}. 
\end{equation}
$(\leq)$ follows from
${\rm LHS} \leq \sum_{e \in E\mit\Gamma} 
\alpha(e) {\rm MFP}^{*}(\delta_{A_e,B_e}; G,S,c) = {\rm RHS}$,
where the inequality follows from (\ref{eqn:cutrep}), and
the equality follows from the max-flow min-cut theorem (\ref{eqn:mfmc}).
Let $\rho^*:VG \to V\mit\Gamma$ be an optimal map 
in (\ref{eqn:minmax}).
Let $d^*$ be the metric on $VG$ 
defined by $d^*(x,y) = D_{\mit\Gamma,\alpha}(\rho^*(x),\rho^*(y))$ 
for $x,y \in VG$.
Then $d^*$ has 
an oriented-tree realization 
$({\mit\Gamma}, \alpha; \{ \rho(x) \}_{x \in VG})$.
Again the deletion of edge $e$ yields 
a cut $(X_e,Y_e)$ of $VG$ with $A_e \subseteq X_e \subseteq V \setminus B_e$, 
and $d^* = \sum_{e \in E\mit\Gamma} \alpha(e) \delta_{X_e,Y_e}$. 
Thus
$
{\rm MFP}^*(\mu;G,S,c) 
= \sum_{xy \in EG} c(xy) d^*(x,y) 
= \sum_{e \in E\mit\Gamma} \alpha(e) \sum_{xy \in EG} c(xy) \delta_{X_e,Y_e}(x,y) 
= \sum_{e \in E\mit\Gamma} \alpha(e) c(\partial X_e)
$.

{\bf III.}
Finally, we show the existence 
of an integral optimal multiflow.
We use the splitting-off technique.
By multiplying edges, we may assume 
that each edge has unit capacity.
For a pair $(xy,yz)$ 
of consecutive edges, 
the splitting-off
operation is to delete 
$xy$ and $yz$ and 
add a new edge from $x$ to $z$ (of unit capacity).
If the splitting-off operation does not 
decrease the optimal multiflow value, 
then from any optimal multiflow in 
the new network after the splitting-off
we obtain an optimal multiflow in the initial network, 
and we can apply the inductive argument 
(on the number of edges). 
Consider any optimal (fractional) 
multiflow $f = ({\cal P},\lambda)$.
Suppose that there is a pair $(xy,yz)$
of consecutive edges 
such that some path in ${\cal P}$ with nonzero flow-value
passes through $xy,yz$ in order.
If such a pair does not exist, 
then $f$ is already an integral multiflow.
We show that 
the splitting-off at $(xy,yz)$ is successful.
Suppose that 
the splitting-off decreases the optimal flow-value.
By (\ref{eqn:mincut}),
there are $e \in E\mit\Gamma$ 
and $X^*$ attaining the minimum of 
$\min \{ c(\partial X) \mid A_e \subseteq X \subseteq VG \setminus B_e\}$
such that ($*$) $x,z \in X^* \not \ni y$ or $y \in X^* \not \ni x,z$.
Since $f$ is 
an optimal multiflow for weight $\delta_{A_e,B_e}$,
i.e., a maximum (single commodity) $(A_e,B_e)$-flow,
each path in ${\cal P}$ (with nonzero flow-value) 
must meet $\partial X^*$ at most once.
This contradicts ($*$).
\subsection{Combinatorial min-max relations}\label{subsec:combinatorial}
We have already shown that
if $\dim \bar Q_{\mu}^{slim} \leq 1$, 
then we obtain
an oriented-tree realization of $\mu$ by $\bar Q_{\mu}^{slim}$,
and 
the min-max relation (\ref{eqn:minmax})
from this realization.
The next theorem 
states 
that if $\mu$ is realized 
by an oriented tree, 
then one can get such a min-max relation 
directly (without calculating $\bar Q_{\mu}^{slim}$).
Let ${\rm IMFP}^* (\mu;G,S,c)$ 
denote the maximum flow-value with respect to $\mu$ 
over all integral multiflows in $(G,S,c)$.
\begin{Thm}\label{thm:comb_minmax}
Let $\mu$ be a directed distance on $S$ having 
an oriented-tree realization $({\mit\Gamma},\alpha; \{F_s\}_{s \in S})$, 
and let $(G,S,c)$ be an inner Eulerian network 
such that the Eulerian condition is fulfilled by 
each terminal $s$ 
with $F_s$ being neither a single node nor a directed path.
Then the following relation holds:
\begin{eqnarray}\label{eqn:comb_minmax}
&& {\rm MFP}^* (\mu; G,S,c) = {\rm IMFP}^* (\mu;G,S,c) \\
&& = \min \left\{ \sum_{xy \in EG} c(xy) D_{{\mit\Gamma},\alpha}(\rho(x),\rho(y)) \bigmid 
\rho: VG \to V\mit\Gamma,\ \rho(s) \in VF_s \ (s \in S) \right\} \nonumber \\[0.2em]
&& = \sum_{e \in E{\mit\Gamma}} \alpha(e) 
\min \{c(\partial X) \mid A_e \subseteq X \subseteq VG \setminus B_e \}, \nonumber
\end{eqnarray}
where $(A_e,B_e)$ is a partial cut on $S$ 
determined by the deletion of edge $e \in E\mit\Gamma$.
\end{Thm}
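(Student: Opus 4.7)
The plan is to follow the three-stage template of the proof of Theorem~\ref{thm:integrality}~(2), now using the given oriented-tree realization $({\mit\Gamma},\alpha;\{F_s\})$ in place of the canonical slimmed tree. Denote the four quantities in~(\ref{eqn:comb_minmax}) by $A := {\rm MFP}^*(\mu;G,S,c)$, $B := {\rm IMFP}^*(\mu;G,S,c)$, $C$ (the discrete location value), and $D$ (the min-cut sum); trivially $B \leq A$. First I would establish the weak-duality chain $A \leq C$, $A \leq D$, $C \geq D$. For $A \leq C$: any feasible $\rho: VG \to V{\mit\Gamma}$ with $\rho(s) \in VF_s$ induces a directed metric $d(x,y) := D_{{\mit\Gamma},\alpha}(\rho(x),\rho(y))$ on $VG$ with $d(s,t) \geq D_{{\mit\Gamma},\alpha}(F_s,F_t) = \mu(s,t)$, so $d$ is LPD-feasible with the same objective as the $\rho$-location value. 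For $A \leq D$: the cut decomposition $\mu = \sum_e \alpha(e)\delta_{A_e,B_e}$ of~(\ref{eqn:cutrep}) gives $\val(\mu,f) = \sum_e \alpha(e)\val(\delta_{A_e,B_e},f)$ for any multiflow $f$, and~(\ref{eqn:mfmc}) bounds each term. For $C \geq D$: any $\rho$ determines cuts $X_e(\rho) := \rho^{-1}(V{\mit\Gamma}'_e)$ with $A_e \subseteq X_e(\rho) \subseteq VG \setminus B_e$ and $\sum_{xy \in EG} c(xy) D_{{\mit\Gamma},\alpha}(\rho(x),\rho(y)) = \sum_e \alpha(e) c(\partial X_e(\rho))$, since $D_{{\mit\Gamma},\alpha}$ on $V{\mit\Gamma}$ decomposes as a sum of edge-cut functions weighted by $\alpha$.

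Next, to promote these weak inequalities to $A = C = D$ and to establish $B = A$, I would adapt Parts II and III of the proof of Theorem~\ref{thm:integrality}~(2). The key step is to construct an optimal fractional $\mu$-multiflow $f^*$ that is simultaneously a maximum $(A_e,B_e)$-flow for every $e \in E{\mit\Gamma}$; its $\mu$-value is then $\sum_e \alpha(e) \min c(\partial X) = D$, squeezing $B \leq A \leq D \leq C$ to equality. For integrality, after scaling all edges to unit capacity, iteratively split off pairs $(xy,yz)$ used consecutively by some positive-flow path of $f^*$ at a node $y$ fulfilling the Eulerian condition. Splitting off does not decrease max-flow: otherwise the min-cut formula would force some minimum $(A_e,B_e)$-cut $X^*$ to satisfy the property $(*)$ of Part~III of the proof of Theorem~\ref{thm:integrality}~(2), contradicting that each positive-flow path of $f^*$ meets $\partial X^*$ at most once. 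Iteration terminates at an integral multiflow attaining the value $D$.

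The main obstacle is justifying that the weakened Eulerian hypothesis here---requiring the Eulerian condition only at terminals $s$ with $F_s$ being neither a single node nor a directed path---still suffices both for producing $f^*$ and for the splitting-off to cover all relevant pairs. The guiding principle is that terminals $s$ with trivial $F_s$ (single node or directed path) have ``locally one-dimensional'' $\mu$-geometry at $s$ and correspond to proper terminals in the sense of Theorem~\ref{thm:integrality}~(2), i.e., $(Q_\mu)_s$ has no fat; so the hypothesis of Theorem~\ref{thm:comb_minmax} implies the properly inner Eulerian condition required by Theorem~\ref{thm:integrality}~(2). Formally completing this reduction requires showing that any oriented-tree realization forces $\dim \bar Q_\mu^{slim} \leq 1$ and that the trivial-$F_s$ terminals match proper terminals under this correspondence; granted this, Theorem~\ref{thm:integrality}~(2) delivers the integrality, and, combined with the weak-duality identities of the first paragraph, yields all equalities of~(\ref{eqn:comb_minmax}).
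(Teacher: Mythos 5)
Your weak-duality chain ($A \le C$, $A \le D$, $C \ge D$, $B \le A$) is fine, but the heart of the matter is deferred rather than proved. The step you flag with ``granted this'' --- that an oriented-tree realization forces $\dim \bar Q_{\mu}^{slim} \le 1$ and that terminals with $F_s$ a single node or a directed path are exactly the proper ones --- is precisely the nontrivial content of the paper's Proposition~\ref{prop:realization}, whose proof (the lengthy analysis of $K(\mu_s)$, the partition $\{W,U_1,\dots,U_k\}$, and the perturbation argument showing no proper fats or degenerate sets arise) is the bulk of the work behind Theorem~\ref{thm:comb_minmax}. Stating that this ``requires showing'' it is not a proof; without it you have neither the dimension bound nor the properly-inner-Eulerian reduction, so Theorem~\ref{thm:integrality}~(2) cannot yet be invoked.

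Moreover, even granting those two facts in the form you state them, your conclusion does not close: Theorem~\ref{thm:integrality}~(2) and the relations (\ref{eqn:minmax}), (\ref{eqn:mincut}) are phrased for the tree arising as the $1$-skeleton of a slimmed section of $(Q_{\mu}^{slim})^+$, whereas the equalities claimed in (\ref{eqn:comb_minmax}) are for the \emph{given} realization $({\mit\Gamma},\alpha;\{F_s\})$. From $B=A$ and $B \le A \le D \le C$ you cannot squeeze $A=D=C$; you need the strong duality ${\rm MFP}^*=C$ for the given tree, which is exactly what Proposition~\ref{prop:realization}~(1)--(3) buys (after adding isolated terminals so that all single-node subtrees occur): $(Q_{\mu}^{slim})^+$ is itself a slimmed section whose $1$-skeleton is ${\mit\Gamma}$ and whose terminal regions are the $F_s$, so (\ref{eqn:minmax}) and (\ref{eqn:mincut}) hold verbatim with ${\mit\Gamma}$ and the $F_s$. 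Your alternative route --- ``construct an optimal fractional $f^*$ locking all $(A_e,B_e)$'' --- is asserted but never constructed; such a locking multiflow is equivalent to the equality $A=D$ you are trying to prove (indeed the directed locking theorem is \emph{derived from} Theorem~\ref{thm:comb_minmax} in the paper), so as written this part of the argument is circular. To repair the proposal you must actually prove Proposition~\ref{prop:realization} (or an equivalent identification of $({\mit\Gamma},\alpha;\{F_s\})$ with a slimmed section and its terminal subcomplexes), not merely the dimension and proper-terminal statements.
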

The proof uses the next proposition, 
which says that $(Q_{\mu}^{slim})^+$ is (essentially) 
a geometric realization of 
an oriented-tree realization
$({\mit\Gamma}, \alpha; \{F_s\}_{s \in S})$.
\begin{Prop}\label{prop:realization}
Suppose that $\mu$ has an oriented-tree realization 
$({\mit\Gamma}, \alpha; \{F_s\}_{s \in S})$ so that $\{ F_s\}_{s \in S}$ 
contains all single-node subtrees.
Let $S_{0} \subseteq S$ consist of elements $s$ 
such that $F_s$ is a single node $v_s$.
Then the following holds:
\begin{itemize}
\item[{\rm (1)}] $(Q_{\mu}^{slim})^+ 
             = \bigcup \{ [\mu_s, \mu_t] \mid s,t \in S_0, v_sv_t \in E{\mit\Gamma} \}$.
\item[{\rm (2)}] 
$(Q_{\mu}^{slim})^+_s = \bigcup \{ [\mu_t, \mu_u] \mid t,u \in S_0, v_tv_u \in EF_{s} \}$ for $s \in S$.
\item[{\rm (3)}] $(Q_{\mu}^{slim})^+$ itself is a slimmed section.
\item[{\rm (4)}] $(Q_{\mu})_s$ has no fat if $F_s$ 
                 is a single node or a directed path.
\end{itemize}
\end{Prop}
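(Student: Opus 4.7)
The plan is to construct $(Q_\mu^{slim})^+$ explicitly from the oriented-tree data $({\mit\Gamma},\alpha;\{F_s\}_{s\in S})$ by identifying, for each $s \in S_0$, the tree node $v_s$ with $\mu_s \in \RR^{S^{cr}}$ and then connecting adjacent nodes by straight segments. The first step is to verify $\mu_s \in (Q_\mu)_s^+$ for every $s \in S_0$. Nonnegativity is immediate; the inequality $\mu(s',s)+\mu(s,t') \geq \mu(s',t')$ needed for $\mu_s \in P_\mu$ follows from the tree realization, because concatenating an optimal $D_{{\mit\Gamma},\alpha}$-walk from $F_{s'}$ to $v_s$ with one from $v_s$ to $F_{t'}$ yields a (possibly non-optimal) walk from $F_{s'}$ to $F_{t'}$ whose correctly-oriented length is the sum of the two. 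Minimality follows from~(\ref{fact:technical})~(1'): $s^c$ is joined to every $t^r$ in $K(\mu_s)$ via $\mu(s,s)+\mu(s,t)=\mu(s,t)$, and dually for $s^r$.

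Next, for each oriented edge $v_sv_t$ of length $\alpha_0 := \alpha(v_sv_t)$, I would show that the affine segment $[\mu_s,\mu_t]$ lies in $Q_\mu^+$ as a $1$-face, isometric via~(\ref{fact:technical})~(3) to a directed length-$\alpha_0$ segment in $(\RR,D_\infty^+)$ oriented from $\mu_s$ to $\mu_t$. Containment in $P_\mu^+$ is immediate by convexity; the isometry and orientation follow from a case split on which side of the deleted edge $v_sv_t$ a given $v_{s'}$ (or a representative of $F_{s'}$) lies on, yielding $\mu(s',t)-\mu(s',s) \in \{0,\alpha_0\}$, and dually on the $r$-side. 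Setting $R := \bigcup\{[\mu_s,\mu_t] : v_sv_t\in E{\mit\Gamma}\}$, the tree structure of ${\mit\Gamma}$ then makes $R$ connected, balanced, and isometric to $({\mit\Gamma},D_{{\mit\Gamma},\alpha})$.

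Claim~(2) reduces to computing, for $s\in S$ and $p_\lambda = (1-\lambda)\mu_\sigma+\lambda\mu_\tau$ on the segment for edge $v_\sigma v_\tau$, that $p_\lambda(s^c)+p_\lambda(s^r)$ is the convex combination of $\mu(s,\sigma)+\mu(\sigma,s)$ and $\mu(s,\tau)+\mu(\tau,s)$, which vanishes exactly when both $v_\sigma,v_\tau \in VF_s$; this gives $R_s=\bigcup\{[\mu_t,\mu_u]:v_tv_u\in EF_s\}$. For claim~(4), if $F_s$ is a single node then $R_s=\{\mu_s\}$ and the perturbation criterion of~(\ref{fact:movement}) yields no admissible $X$ in ${\cal S}_0$ producing a fat; if $F_s$ is a directed path, the strict monotonicity of $D_\infty$ along successive segments blocks the edge-vanishing condition in $K(p)$ that defines a fat.

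The main obstacle, underlying both claims~(1) and~(3), is the equality $(Q_\mu^{slim})^+ = R$. The inclusion $R \subseteq (Q_\mu^{slim})^+$ follows once one checks that $R$ is balanced, projects bijectively onto its image in $\bar Q_\mu^{slim}$, and contains no proper fat, all verifiable by inspecting the components of $K(p)$ on segment points via~(\ref{fact:technical})~(1')(2'). The harder direction is showing that every $p \in Q_\mu^+ \setminus R$ is a proper fat: using~(\ref{fact:movement}), I would identify $X \in {\cal S}_0$ and a perturbation direction $\11_{(S \setminus X)^c}$ (or $\11_{(S\setminus X)^r}$) along which $p$ can be decreased while remaining in $Q_\mu^+$, and tie this $X$ to the side of the tree on which $p$ sits relative to the segments of $R$. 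Once this equality is in hand, claim~(3) is immediate from the balancedness of $R$ and the nondomination properties of the segment endpoints within each $Q_{\mu,X}^{deg}$.
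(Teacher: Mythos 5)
Your overall plan --- realizing $(Q_{\mu}^{slim})^+$ as the union of the segments $[\mu_s,\mu_t]$ over edges of ${\mit\Gamma}$ --- is the same as the paper's, and your preliminary steps (that $\mu_s\in Q_{\mu}^+$ and is not a fat, that each segment sits in $Q_{\mu}^+$ isometrically as a directed segment, and the convex-combination computation behind (2)) are sound. But the step you yourself single out as the main obstacle is exactly where the proof has to live, and you give no mechanism for it. To show that every $p\in Q_{\mu}^+\setminus R$ is a proper fat you must, for an \emph{arbitrary} such $p$, produce an $X\in{\cal S}_0$ with $p\in Q_{\mu,X}$, verify the neighbourhood condition $N_p(S^c\setminus X^c)\subseteq S^r\setminus X^r$ (or its transpose), \emph{and} rule out that $p$ lies in a degenerate set; ``tie this $X$ to the side of the tree on which $p$ sits'' is not an argument, since an arbitrary point of $Q_{\mu}^+$ carries no a priori relation to ${\mit\Gamma}$. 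The paper avoids this global difficulty by a local-to-global argument: it describes $K(\mu_s)$ explicitly through the partition $\{W,U_1,\ldots,U_k\}$ of $S$ obtained by deleting $v_s$, and shows that any perturbation $p$ of $\mu_s$ remaining in $(Q_{\mu}^{slim})^+$ must have decrease set $X^-$ equal to a \emph{single} block $U_i^c$ (or $U_i^r$) --- the alternatives (a block split by $X^-$, or $|I|\geq 2$) are excluded precisely by exhibiting proper fats relative to subsets $W_0\subseteq W$, which is the crux of the whole proof. Hence every face of $(Q_{\mu}^{slim})^+$ through $\mu_s$ is one of the segments, and connectivity of $(Q_{\mu}^{slim})^+$ then gives (1). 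The same analysis shows the perturbed points never lie in a degenerate set, so $Q_{\mu}$ has \emph{no} degenerate set; that fact, not ``balancedness of $R$ and nondomination of segment endpoints,'' is what makes (3) true --- without it, neither the bijectivity of $R\to\bar Q_{\mu}^{slim}$ nor the extra condition on the sets $(Q_{\mu,X}^{deg})^+$ in the definition of a slimmed section has been checked.

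Your treatment of (4) also misses its actual content. Statement (4) concerns \emph{all} of $(Q_{\mu})_s$, i.e.\ it asserts that no point $p$ with $p(s^c)+p(s^r)=0$ --- including exactly those points that might a priori be fats and hence lie outside your $R$ --- is a fat. Observing that $R_s=\{\mu_s\}$ when $F_s$ is a single node says nothing about such points, and ``strict monotonicity of $D_{\infty}$ along successive segments'' is not a statement about $K(p)$ for an arbitrary $p\in(Q_{\mu})_s$. The paper proves (4) by the same perturbation machinery: it takes $t\in S_0$ with $v_t$ on $F_s$, perturbs $\mu_t$ inside $(Q_{\mu})_s^+$, and uses the single-node/directed-path hypothesis to pin down which blocks $U_i^c$, $U_j^r$ the nodes $s^r$, $s^c$ fail to cover, forcing the index set $I$ to be a singleton and the perturbed point to stay in $(Q_{\mu}^{slim})_s^+$. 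In short, your proposal is a correct outline of \emph{what} must be proved, but the combinatorial core --- the fat/degenerate-set analysis of $K(p)$ around the points $\mu_s$ --- is asserted rather than proved, so the argument as it stands has a genuine gap.
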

See Section~\ref{subsec:tightspans}~B for definition of $\mu_s$.
The proof is a routine verification, 
but rather technical.
So the proof is given 
in the end of this subsection. 
In (4) the converse (only-if part) also holds. 
However we omit the proof, which 
is also a lengthy verification.

Assuming Proposition~\ref{prop:realization}, 
we complete the proof of Theorem~\ref{thm:comb_minmax}.
Suppose that $\mu$ is realized 
by $({\mit\Gamma},\alpha;\{F_s\}_{s\in S})$.
We can add isolated terminals to $(G,S,c)$ 
so that 
$\{F_s\}_{s \in S}$ includes all single-node subtrees.
Thus we may assume that 
$({\mit\Gamma},\alpha;\{F_s\}_{s\in S})$ fulfills 
the hypothesis in Proposition~\ref{prop:realization}.
Consider a slimmed section $R = (Q_{\mu}^{slim})^+$. 
Then the 1-skeleton graph of $R$ 
coincides with $\mit\Gamma$.
Hence we can apply the arguments 
(e.g., (\ref{eqn:minmax}), (\ref{eqn:mincut}))
in the previous subsection.

We give characterizations of a class of distances $\mu$ 
with $\dim \bar Q_{\mu}^{slim} \leq 1$.
Two partial cuts $(A,B)$ and $(A',B')$ 
are said to be {\em laminar} 
if $A \subseteq A', B \supseteq B'$ 
or $A \subseteq B', B \supseteq A'$ 
or $A \supseteq A', B \subseteq B'$
or $A \supseteq B', B \subseteq A'$.
A family ${\cal A}$ of partial cuts is said to be {\em laminar}
if every pair in ${\cal A}$ is laminar.
\begin{Thm}\label{thm:dim1}
For a directed distance $\mu$ on $S$, 
the following conditions are equivalent:
\begin{itemize}
\item[{\rm (1)}] $\dim \bar Q_{\mu}^{slim} \leq 1$.
\item[{\rm (2)}] $\mu$ has an oriented-tree realization.
\item[{\rm (3)}] There are a laminar family ${\cal A}$ of partial cuts on $S$ 
and a positive weight $\alpha: {\cal A} \to \RR_+$ such that
\[
\mu = \sum_{(A,B) \in {\cal A}} \alpha(A,B) \delta_{A,B}.
\]
\end{itemize}
\end{Thm}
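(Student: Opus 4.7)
The plan is to prove the cycle $(1) \Rightarrow (2) \Rightarrow (3) \Rightarrow (2)$ together with $(2) \Rightarrow (1)$, which yields all three equivalences. The implication $(1) \Rightarrow (2)$ is already contained in the proof of Theorem~\ref{thm:integrality}~(2) in Section~\ref{subsec:proof}: assuming $\dim \bar Q_{\mu}^{slim} \leq 1$, one takes a slimmed section $R$ realized as a union of one-dimensional faces of $(Q_\mu^{slim})^+$; by (\ref{fact:technical})~(3)--(4) each face is isometric to a segment in $(\RR, D_\infty^+)$, and by Corollary~\ref{cor:topology}~(1) the $1$-skeleton $\mit\Gamma$ of $R$ is contractible, hence a tree. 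Orienting each edge $pq$ by declaring $p \to q$ iff $D_\infty(p,q) > 0$, setting $\alpha(pq) := D_\infty(p,q)$, and letting $F_s$ be the subtree of $\mit\Gamma$ induced by $R_s$ yields an oriented-tree realization of $\mu$ (using Corollary~\ref{cor:topology}~(2) for the identity $\mu(s,t) = D_{\mit\Gamma,\alpha}(F_s, F_t)$). Conversely, $(2) \Rightarrow (1)$ is immediate from Proposition~\ref{prop:realization}~(1): after padding $\{F_s\}_{s \in S}$ with all single-node subtrees (which does not alter $\mu$), $(Q_\mu^{slim})^+$ is displayed as a union of the segments $[\mu_s, \mu_t]$ indexed by edges of $\mit\Gamma$, hence is a $1$-dimensional complex.

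For $(2) \Rightarrow (3)$, the decomposition $\mu = \sum_{e \in E\mit\Gamma} \alpha(e) \delta_{A_e, B_e}$ in (\ref{eqn:cutrep}) already exhibits $\mu$ as a positive combination of cut distances, so only laminarity of $\{(A_e, B_e)\}_{e \in E\mit\Gamma}$ remains to be verified. For two distinct edges $e_1, e_2 \in E\mit\Gamma$, simultaneous deletion partitions $V\mit\Gamma$ into three connected components $C_u, C_v, C_w$, and $e_2$ lies in exactly one of the two components produced by deleting $e_1$ alone. Tracking which of $C_u, C_v, C_w$ contains each $F_s$ (when $F_s$ is fully contained in one) and comparing with the partial cuts arising from the individual deletions of $e_1$ and $e_2$ yields, depending on the relative positions of the two edges, either $A_{e_2} \subseteq A_{e_1}$ and $B_{e_2} \supseteq B_{e_1}$, or $A_{e_1} \supseteq B_{e_2}$ and $B_{e_1} \subseteq A_{e_2}$, and these are precisely admissible cases of the laminarity definition.

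The main technical obstacle is $(3) \Rightarrow (2)$: constructing an oriented-tree realization explicitly from a given laminar decomposition. I would argue by induction on $|{\cal A}|$. The base case $|{\cal A}| = 0$ is trivial: $\mu = 0$ is realized by the single-vertex tree with $F_s = \{v\}$ for every $s$. For the inductive step, select an ``innermost'' cut $(A_0, B_0) \in {\cal A}$ that is minimal under the partial order $(A', B') \preceq (A, B) \Leftrightarrow A' \subseteq A$ and $B' \supseteq B$, so that no remaining cut is properly nested between $(A_0, B_0)$ and the trivial cut in the same orientation. Apply the inductive hypothesis to the sub-laminar family ${\cal A} \setminus \{(A_0, B_0)\}$ to obtain an oriented-tree realization $(\mit\Gamma', \alpha'; \{F'_s\}_{s \in S})$ of $\mu' := \mu - \alpha(A_0, B_0) \delta_{A_0, B_0}$; the laminarity of $(A_0, B_0)$ with every remaining cut then pinpoints a unique location (a vertex or midpoint of an edge of $\mit\Gamma'$) at which to attach a new oriented edge of weight $\alpha(A_0, B_0)$, after which each subtree $F_s$ is updated to lie on the tail side of the new edge for $s \in A_0$, on the head side for $s \in B_0$, and to span the new edge otherwise. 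The verification that the modified tree still realizes every cut in ${\cal A}'$ correctly while adding exactly the weight $\alpha(A_0, B_0)\delta_{A_0, B_0}$ requires careful case analysis driven by the four cases of the laminarity condition, and this bookkeeping is the crux of the argument.
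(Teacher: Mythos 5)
Your implications (1) $\Rightarrow$ (2) and (2) $\Rightarrow$ (3) are fine: the former is exactly the paper's argument (the construction of $(\mit\Gamma,\alpha;\{F_s\})$ from a one-dimensional slimmed section in Section~\ref{subsec:proof}), and the laminarity check for the tree cuts $(A_e,B_e)$ in the latter is routine. The first genuine gap is in (2) $\Rightarrow$ (1). You invoke Proposition~\ref{prop:realization}~(1) ``after padding $\{F_s\}_{s\in S}$ with all single-node subtrees (which does not alter $\mu$)'', but the padding does alter the data: it enlarges the terminal set $S$ to some $S'\supseteq S$ and extends $\mu$ to a distance $\mu'$ on $S'$, so Proposition~\ref{prop:realization} only yields $\dim \bar Q_{\mu'}^{slim}\leq 1$, not $\dim \bar Q_{\mu}^{slim}\leq 1$. (In the paper the same padding is harmless because it is used inside the proof of Theorem~\ref{thm:comb_minmax}, where adding isolated terminals does not change the multiflow problem; here the object of interest is the slimmed tropical polytope itself, which lives in $\RR^{S^{cr}}$ and changes with $S$.) To close your argument you would need monotonicity of $\dim \bar Q^{slim}$ under restriction of the terminal set, which the paper never establishes and which is not obvious: fats, degenerate sets and the relation $\sim$ all depend on the whole of $S^{cr}$, and the extension/projection arguments of Section~\ref{sec:unbounded} are carried out only for $Q_\mu$ (indeed Lemma~\ref{lem:dominant}~(2) requires a delicate case analysis precisely because $\bar Q^{slim}$ does not behave naively under such operations). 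The paper's own proof of (2) $\Rightarrow$ (1) is quite different and sidesteps this: an oriented-tree realization gives integral optimal multiflows for every totally Eulerian network by Theorem~\ref{thm:comb_minmax}, while Theorem~\ref{thm:unbounded}~(2) rules out even bounded fractionality when $\dim \bar Q_{\mu}^{slim}\geq 2$; hence the dimension must be at most one. Either adopt that route or prove the restriction inequality you implicitly use.

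The second issue is (3) $\Rightarrow$ (2), which you present only as an inductive sketch and where you yourself flag the verification as ``the crux.'' As set up, the inductive hypothesis hands you an arbitrary oriented-tree realization of $\mu'=\mu-\alpha(A_0,B_0)\delta_{A_0,B_0}$, with no control over where the subtrees of terminals in $A_0$ and $B_0$ sit; the existence of a single attachment point separating all $A_0$-subtrees from all $B_0$-subtrees, and an update of the subtrees (truncating those indexed by $A_0\cup B_0$, extending the others across the new edge) that adds exactly $\alpha(A_0,B_0)\delta_{A_0,B_0}$ without disturbing any other distance, is exactly what has to be proved, not assumed — naive choices (e.g.\ collapsing all $B_0$-subtrees onto a pendant leaf) destroy distances $\mu'(s,t)$ for $s\in B_0$ and $t\notin A_0\cup B_0$. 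The paper avoids this by citing \cite{HH06AC} for (2) $\Leftrightarrow$ (3); if you want a self-contained proof you should either strengthen the induction (e.g.\ build the canonical tree directly from the laminar family, one edge per member, with subtrees read off from membership) or carry out the case analysis you defer.
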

\begin{proof}
We have already seen 
(1) $\Rightarrow$ (2) in Section~\ref{subsec:proof}.
Theorem~\ref{thm:unbounded}~(2)
in Section~\ref{sec:unbounded} says
that if $\dim \bar Q_{\mu}^{slim} \geq 2$
then there is no integer $k$ 
such that $\mu$-MFP has a $1/k$-integral multiflow 
for every Eulerian network.
Therefore,
by Theorem~\ref{thm:comb_minmax}, 
if $\mu$ has an oriented-tree realization, 
then $\dim \bar Q_{\mu}^{slim} \leq 1$ 
necessarily holds.
Thus we have (2) $\Rightarrow$ (1).
The equivalence (2) $\Leftrightarrow$ (3) 
is not difficult, and is essentially obtained by \cite{HH06AC} 
(in an undirected version).
\end{proof}

\paragraph{Directed multiflow locking theorem.}
Let ${\cal A}$ be a set of partial cuts on terminal set $S$ 
in a network.
We say that a multiflow $f$ {\em locks} ${\cal A}$
if $f$ is simultaneously a maximum $(A,B)$-flow 
for all partial cuts $(A,B)$ in ${\cal A}$.
In the case where ${\cal A}$ is laminar, 
there are an oriented-tree 
$\mit\Gamma$ and a family $\{F_s\}_{s \in S}$ 
of subtrees 
such that ${\cal A}$ 
coincides with the set $\{(A_e,B_e)\}_{e \in E\mit\Gamma}$
of partial cuts on $S$.
Consider distance $\mu := 
\sum_{(A,B) \in {\cal A}} \delta_{A,B}$.
Then $\mu$ 
is realized by $({\mit\Gamma},1; \{F_s\}_{s \in S})$.
Here $F_s$ is a directed path 
if (and only if) there is no pair $(A,B),(A',B') \in {\cal A}$ 
with $s \not \in A \cup B \cup A' \cup B'$ and 
$A \subseteq B', A' \subseteq B$ or $B \subseteq A', B' \subseteq A$.
Apply Theorem~\ref{thm:comb_minmax} to $\mu$.
From the last equality in (\ref{eqn:comb_minmax}), 
an optimal multiflow is necessarily optimal 
to $\delta_{A,B}$-MFP for each $(A,B) \in {\cal A}$;
see the argument after (\ref{eqn:mincut}).
This implies the following:
\begin{Thm}
Let ${\cal A}$ be 
a laminar family of partial cuts on $S$, 
and let $(G,S,c)$ be an inner Eulerian network.
If the Eulerian condition is fulfilled 
by each terminal $s$ having a pair $(A,B), (A',B') \in {\cal A}$
with $s \not \in A \cup B \cup A' \cup B'$ 
and $A \subseteq B', A' \subseteq B$ 
or $B \subseteq A', B' \subseteq A$,
then 
there is an integral multiflow locking ${\cal A}$.
\end{Thm}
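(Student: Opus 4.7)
The plan is to realize $\mathcal{A}$ by an oriented tree and reduce to Theorem~\ref{thm:comb_minmax}. Since $\mathcal{A}$ is laminar, the standard correspondence between laminar families of partial cuts and oriented trees (already invoked in the paragraph preceding the theorem, and essentially proved in (2)$\Leftrightarrow$(3) of Theorem~\ref{thm:dim1}) produces an oriented tree $\mit\Gamma$ together with subtrees $\{F_s\}_{s\in S}$ such that the partial cuts obtained by deleting edges of $\mit\Gamma$ are exactly the members of $\mathcal{A}$. Setting $\mu := \sum_{(A,B)\in\mathcal{A}} \delta_{A,B}$ and $\alpha \equiv 1$, equation (\ref{eqn:cutrep}) shows $(\mit\Gamma,\alpha;\{F_s\}_{s\in S})$ is an oriented-tree realization of $\mu$.

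Next I would verify the key combinatorial translation: the terminals $s$ for which $F_s$ is neither a single node nor a directed path are exactly those admitting a pair $(A,B),(A',B')\in\mathcal{A}$ with $s \notin A \cup B \cup A' \cup B'$ and one of the crossing inclusions $A\subseteq B',\,A'\subseteq B$ or $B\subseteq A',\,B'\subseteq A$. This is a direct inspection: $F_s$ fails to be a directed path precisely when it contains a node of in-degree $\geq 2$ or out-degree $\geq 2$ within $F_s$, which corresponds in $\mit\Gamma$ to two edges oriented in the described crossing way, neither separating a terminal of $F_s$. Consequently the hypothesis of the theorem supplies exactly the Eulerian condition required by Theorem~\ref{thm:comb_minmax} applied to $\mu$.

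Having checked the hypotheses, Theorem~\ref{thm:comb_minmax} yields an integral multiflow $f$ with
\[
\val(\mu,f) \;=\; \sum_{e\in E\mit\Gamma} \alpha(e)\,\min\{c(\partial X) \mid A_e\subseteq X\subseteq VG\setminus B_e\}.
\]
Now write the left-hand side as $\val(\mu,f)=\sum_{(A,B)\in\mathcal{A}} \val(\delta_{A,B},f)$. For every single $(A,B)\in\mathcal{A}$ the max-flow min-cut theorem (\ref{eqn:mfmc}) gives $\val(\delta_{A,B},f) \leq \min\{c(\partial X)\mid A\subseteq X\subseteq VG\setminus B\}$. Summing these upper bounds over $\mathcal{A}$ reproduces exactly the right-hand side above, so equality must hold term by term. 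Thus $f$ is simultaneously a maximum $(A,B)$-flow for every $(A,B)\in\mathcal{A}$, i.e.\ it locks $\mathcal{A}$.

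The main obstacle is the combinatorial translation in the second paragraph—matching the ``crossing pair'' condition on $\mathcal{A}$ with the shape of $F_s$ in the oriented tree—since the laminarity to oriented-tree conversion has several orientation choices that must be handled consistently. Once this is done, the rest is a direct application of Theorem~\ref{thm:comb_minmax} together with the tight sum-of-min-cut expression.
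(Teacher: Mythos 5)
Your proposal is correct and follows essentially the same route as the paper: realize the laminar family $\mathcal{A}$ by an oriented tree so that $\mu=\sum_{(A,B)\in\mathcal{A}}\delta_{A,B}$ is realized by $(\mit\Gamma,1;\{F_s\})$, observe that the crossing-pair condition is exactly the condition that $F_s$ is neither a single node nor a directed path, and then apply Theorem~\ref{thm:comb_minmax}, forcing term-by-term equality with the individual min cuts to obtain the locking. The only difference is presentational: you spell out the term-by-term equality argument that the paper compresses into the remark ``see the argument after (\ref{eqn:mincut}).''
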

This includes
Ibaraki-Karzanov-Nagamochi's result for laminar cuts.
\begin{Thm}[{\cite[Theorem 5]{IKN98}}]
Let ${\cal A}$ be a laminar family of cuts on $S$.
For every inner Eulerian network $(G,S,c)$,
there is an integral multiflow locking ${\cal A}$.
\end{Thm}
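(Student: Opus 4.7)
The plan is to derive this theorem directly from the preceding partial-cut locking theorem. That theorem imposes the Eulerian condition only on those terminals $s \in S$ for which there exist pairs $(A,B), (A',B') \in \mathcal{A}$ with $s \notin A \cup B \cup A' \cup B'$ and the prescribed nesting configuration. First I would observe that, when every member of $\mathcal{A}$ is a (full) cut, $A \cup B = S$ for every $(A,B) \in \mathcal{A}$, so each terminal $s$ satisfies $s \in A \cup B$ for every single member of $\mathcal{A}$. In particular the condition $s \notin A \cup B \cup A' \cup B'$ is never fulfilled, and the extra Eulerian requirement on terminals becomes vacuous.

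Consequently the preceding theorem, applied under only the given inner Eulerian hypothesis on $(G,S,c)$, immediately delivers an integral multiflow that simultaneously maximizes the $(A,B)$-flow for every $(A,B) \in \mathcal{A}$, which is precisely the desired locking multiflow. There is no real obstacle; the substantive work has already been carried out in the preceding partial-cut locking theorem (which in turn rests on Theorem~\ref{thm:comb_minmax} applied to the distance $\mu := \sum_{(A,B)\in\mathcal{A}} \delta_{A,B}$ and its canonical oriented-tree realization arising from the laminar structure). If one prefers a self-contained route, one notes that for a laminar family of full cuts the associated oriented-tree realization $({\mit\Gamma},1;\{F_s\}_{s\in S})$ of $\mu$ has each $F_s$ equal to a single node of ${\mit\Gamma}$, since $s$ lies on a single side of every cut; then Theorem~\ref{thm:comb_minmax} applies with no terminal Eulerian condition and yields the same conclusion.
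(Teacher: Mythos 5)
Your derivation is correct and is essentially the paper's own: the statement is obtained by specializing the preceding partial-cut locking theorem, observing that for full cuts the condition $s \notin A \cup B \cup A' \cup B'$ can never hold, so the extra terminal Eulerian requirement is vacuous and inner Eulerian-ness alone suffices. Your alternative self-contained remark---that in the oriented-tree representation of a laminar family of full cuts every $F_s$ is a single node (any edge of $\mit\Gamma$ inside $F_s$ would yield a non-full cut), so Theorem~\ref{thm:comb_minmax} applies with no terminal Eulerian condition---is also sound and amounts to the same mechanism.
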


\paragraph{0-1 distances and commodity graphs.}
Suppose the case where $\mu$ is $\{0,1\}$-valued.
In this case, $\mu$ can be identified 
with a {\em commodity graph} $H$ by
$st \in EH \Leftrightarrow \mu(s,t) = 1$.
For a commodity graph $H$ on $S$, 
let $\mu_H$ denote the corresponding 
0-1 distance on $S$ defined by 
$\mu_H(s,t) = 1 \Leftrightarrow st \in EH$. 
In the case where $H$ is a complete digraph, 
Lomonosov and Frank independently established 
the following min-max relation:
\begin{Thm}[{\cite{Lom78, Frank89}}]
Let $H$ be a complete digraph on $S$.
For every inner Eulerian network $(G,S,c)$, we have
\begin{equation*}
 {\rm MFP}^*(\mu_H;G,S,c) = {\rm IMFP}^* (\mu_H;G,S,c) 
 = \sum_{s \in S} \min \{ c(\partial X) \mid s \in X \subseteq VG \setminus (S\setminus s) \}.
\end{equation*} 
\end{Thm}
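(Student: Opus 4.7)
The plan is to deduce the theorem from Theorem~\ref{thm:comb_minmax} by exhibiting an explicit oriented-tree realization of $\mu_H$. Let $\mit\Gamma$ be the oriented tree with node set $\{c\} \cup \{v_s : s \in S\}$ and, for each $s \in S$, a single oriented edge $e_s = v_sc$ directed from $v_s$ to $c$ with unit length $\alpha(e_s) = 1$; set $F_s := \{v_s\}$, a single-node subtree. For distinct $s,t \in S$, the unique walk from $v_s$ to $v_t$ is $v_s \to c \to v_t$; it traverses $e_s$ in the direction of its orientation (contributing $\alpha(e_s) = 1$) and $e_t$ against its orientation (contributing $0$). Hence $D_{\mit\Gamma,\alpha}(F_s,F_t) = D_{\mit\Gamma,\alpha}(v_s,v_t) = 1 = \mu_H(s,t)$. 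By symmetry the same computation gives $D_{\mit\Gamma,\alpha}(v_t,v_s) = 1 = \mu_H(t,s)$, and the diagonal is zero, so $(\mit\Gamma,\alpha;\{F_s\}_{s \in S})$ is a valid oriented-tree realization of $\mu_H$.

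Removing the edge $e_s$ from $\mit\Gamma$ isolates $v_s$, and since the tail of $e_s$ is $v_s$, the induced partial cut on $S$ is exactly $(A_{e_s},B_{e_s}) = (\{s\}, S \setminus s)$. Because every $F_s$ is a single node, the extra Eulerian hypothesis of Theorem~\ref{thm:comb_minmax} (required only when some $F_s$ is neither a single node nor a directed path) is vacuous, so the assumption that $(G,S,c)$ is inner Eulerian is already enough to invoke that theorem. Applying it gives
\[
{\rm MFP}^*(\mu_H;G,S,c) = {\rm IMFP}^*(\mu_H;G,S,c) = \sum_{s \in S} \alpha(e_s)\,\min\{c(\partial X) \mid \{s\} \subseteq X \subseteq VG \setminus (S \setminus s)\},
\]
and since $\alpha \equiv 1$ this is precisely the stated min-max formula.

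All conceptual work lives in Theorem~\ref{thm:comb_minmax}; the only ingredient to check is the realization. The natural choice that makes this work is the in-star with every arm oriented toward the center: this choice forces $D_{\mit\Gamma,\alpha}(v_s,v_t) = D_{\mit\Gamma,\alpha}(v_t,v_s) = 1$, matching the symmetry of $\mu_H$ arising from $H$ being a complete digraph. Any mixed orientation would encode only one of the two edges $st, ts$ at unit length and hence realize a tournament-type weight instead, so the orientation is essentially forced; beyond this observation there is no genuine obstacle.
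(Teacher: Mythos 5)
Your proof is correct and matches the paper's own argument: the paper likewise observes that the all-one distance $\mu_H$ is realized by a star with the sink (or source) as its center and invokes Theorem~\ref{thm:comb_minmax}, whose extra Eulerian hypothesis is vacuous since every $F_s$ is a single node. Your in-star computation and the identification of the partial cuts $(\{s\},S\setminus s)$ fill in exactly the details the paper leaves implicit.
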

This theorem can be regarded 
as a special case of Theorem~\ref{thm:comb_minmax}.
Indeed, the all-one distance is realized 
by a star with the sink (or source) as its center.
So we can extend this theorem 
to a class of commodity graphs
having oriented-tree realizations.
A {\em quasi-complete digraph} $H$ 
is a simple digraph having
a node subset $T$
such that
\begin{itemize}
\item[(0)] all edges are incident to $T$,
\item[(1)] the subgraph induced by $T$ is a complete digraph,  and
\item[(2)] all edges between $T$ and $VH \setminus T$ leave $T$ or
enter $T$. 
\end{itemize}
The node set $T$ is said to be the {\em complete part}, 
and $H$ is said to be
{\em source-type} if the edges 
between $T$ and $VH \setminus T$ enter $T$
and is said to be {\em sink-type} otherwise.
For a quasi-complete digraph $H$ 
with complete part $T = \{x_1,x_2,\ldots, x_m\}$, 
the corresponding 
$\{0,1\}$-valued distance $\mu_H$ has 
an oriented-tree realization by a star $\mit\Gamma$ 
of $m$ leaves $v_1,v_2,\ldots, v_m$
such that the center $v_0$ is a source if $H$ is source-type, 
and is a sink if $H$ is sink-type.
Indeed, for $i=1,2,\ldots, m$, 
let $R_{x_i}$ be the subtree consisting of one node $v_i$. 
For a node $s \in VH \setminus T$, 
if $s$ is joined 
to $x_{j_1},x_{j_2},\ldots, x_{j_k}$, 
then let $R_{s}$ be the subtree 
consisting of nodes 
$\{v_0,v_1,v_2,\ldots, v_m\} \setminus \{v_{j_1},v_{j_2},\ldots,v_{j_k} \}$.
Then one can verify that $({\mit\Gamma}, 1; \{R_s\}_{s \in VH})$
is a required realization.
In particular, each node $s$
having at least $m-1$ edges 
is associated with  a single node or a directed path in $\mit\Gamma$.
By Theorem~\ref{thm:comb_minmax}, we have the following: 
\begin{Thm}
Let $H$ be a quasi-complete digraph on $S$ with complete part $T$, 
and
let $(G,S,c)$ be an inner Eulerian network 
such that the Eulerian condition is fulfilled by 
each terminal $s$ incident to at most $|T| - 2$ edge 
in $H$.
Then the following holds:
\begin{eqnarray}\label{eqn:min-max01}
&& {\rm MFP}^*(\mu_H;G,S,c) = {\rm IMFP}^*(\mu_H;G,S,c) \\[0.5em]
&& \quad = \left\{
\begin{array}{ll} \displaystyle
\sum_{s \in T}
\min \{c(\partial X) \mid 
s \in X \subseteq VG \setminus N_H(s) \} & 
{\rm if}\ \mbox{$H$ is sink-type,} \\ 
 & \\
\displaystyle
\sum_{s \in T}
\min \{c(\partial X) \mid 
N_H(s) \subseteq X \subseteq VG \setminus s \} & 
{\rm if}\ \mbox{$H$ is source-type,}
\end{array}\right. \nonumber
\end{eqnarray}
where $N_H(s)$ is the set of nodes incident to $s$ in $H$.
\end{Thm}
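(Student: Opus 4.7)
The plan is to apply Theorem~\ref{thm:comb_minmax} to $\mu_H$ with the oriented-tree realization $({\mit\Gamma}, 1; \{R_s\}_{s \in VH})$ constructed in the paragraph preceding the statement, and then translate the edge-by-edge min-cut expression into the closed form (\ref{eqn:min-max01}). As in the proof of Theorem~\ref{thm:comb_minmax}, I may add isolated terminals to $(G,S,c)$ so that $\{R_s\}_{s \in VH}$ contains every single-node subtree of ${\mit\Gamma}$; this does not change the optimal value.

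I would first check that the Eulerian hypothesis of the current statement coincides with the one required by Theorem~\ref{thm:comb_minmax}. Because $H$ is quasi-complete, no two nodes of $VH \setminus T$ are adjacent, so for $s \in VH \setminus T$ the number of edges of $H$ incident to $s$ equals $|N_H(s)|$. By construction $R_s$ consists of the center $v_0$ together with every leaf $v_j$ for which $x_j \notin N_H(s)$. Hence $R_s$ is a single node exactly when $|N_H(s)| = |T|$, a single edge at $v_0$ (a directed path) exactly when $|N_H(s)| = |T|-1$, and otherwise contains $v_0$ together with at least two leaves; in this last case $R_s$ is a proper star and, because all edges of ${\mit\Gamma}$ at $v_0$ are oriented alike (outward for source-type, inward for sink-type), it is not a directed path. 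For $s = x_i \in T$ the subtree $R_{x_i} = \{v_i\}$ is already a single node. Thus the condition ``$F_s$ is neither a single node nor a directed path'' is equivalent to ``$s$ is incident to at most $|T|-2$ edges of $H$''.

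It remains to identify the partial cut $(A_{e_i}, B_{e_i})$ arising from the deletion of the edge $e_i$ of ${\mit\Gamma}$ joining $v_0$ and $v_i$. Every $R_s$ for $s \in VH \setminus T$ contains $v_0$, while $R_{x_j} = \{v_j\}$ lies on the $v_j$-side, so the side containing only $v_i$ captures precisely $x_i$, and the opposite side captures $T \setminus \{x_i\}$ together with those $s \in VH \setminus T$ for which $x_i \in N_H(s)$; since $T$ induces a complete subdigraph, this opposite side equals $N_H(x_i)$. Using the convention that ${\mit\Gamma}'_e$ contains the tail of the oriented edge, the sink-type case (where $e_i$ is oriented $v_i \to v_0$) gives $A_{e_i} = \{x_i\}$ and $B_{e_i} = N_H(x_i)$, so the constraint $A_{e_i} \subseteq X \subseteq VG \setminus B_{e_i}$ becomes $x_i \in X \subseteq VG \setminus N_H(x_i)$; in the source-type case ($e_i$ oriented $v_0 \to v_i$) the two sides swap, yielding $N_H(x_i) \subseteq X \subseteq VG \setminus x_i$. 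Substituting $\alpha \equiv 1$ and these partial cuts into the formula of Theorem~\ref{thm:comb_minmax} produces (\ref{eqn:min-max01}) verbatim. The only mild obstacle is careful bookkeeping of the tail/head convention together with the sink-type/source-type orientation at $v_0$; once that is handled, the result reduces entirely to Theorem~\ref{thm:comb_minmax}.
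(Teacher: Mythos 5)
Your proposal is correct and follows essentially the same route as the paper: the paper's proof consists precisely of the star realization $({\mit\Gamma},1;\{R_s\})$ constructed just before the statement, followed by an application of Theorem~\ref{thm:comb_minmax}, with the Eulerian hypothesis and the partial cuts $(A_{e_i},B_{e_i})=(\{x_i\},N_H(x_i))$ (or swapped, for source-type) identified exactly as you do. Your explicit verification of the tail/head bookkeeping and of the equivalence between ``at most $|T|-2$ incident edges'' and ``$R_s$ neither a single node nor a directed path'' fills in details the paper leaves implicit, but is the same argument.
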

A {\em multipartite extension} of 
a graph 
$H$ is a graph obtained by 
replacing each node $v$ by a node subset $U_v$ 
and joining each pair 
$(x,y) \in U_v \times U_u$ exactly when $vu \in EH$.
Trivially
we can further extend this relation (\ref{eqn:min-max01})
to the case where $H$ is a multipartite extension of 
a quasi-complete digraph
(by super sink/source argument).

Also we easily see from Theorem~\ref{thm:dim1} the following:
\begin{Prop}
For a simple digraph $H$ on $S$,
the following conditions are equivalent:
\begin{itemize}
\item[{\rm (a)}] $\dim \bar Q_{\mu_H}^{slim} \leq 1$.
\item[{\rm (b)}] $H$ is a multipartite extension of a quasi-complete digraph.

\end{itemize}
\end{Prop}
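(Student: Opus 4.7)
The plan is to convert both conditions into statements about oriented-tree realizations of $\mu_H$ via Theorem~\ref{thm:dim1}, and then to relate those realizations to the quasi-complete / multipartite-extension structure by direct combinatorial analysis.

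For (b) $\Rightarrow$ (a), the construction given in the paragraphs immediately preceding the proposition already furnishes an oriented-tree realization of $\mu_H$. Namely, if $H$ is a multipartite extension of a quasi-complete digraph $H'$ with complete part $T = \{x_1, \ldots, x_m\}$, one takes a star ${\mit\Gamma}$ with center $v_0$ (a source or sink according to whether $H'$ is source- or sink-type), $m$ leaves $v_1, \ldots, v_m$, unit edge lengths, and subtrees $F_s$ assigned by the recipe there---extended constantly across each part of the multipartite extension. A direct verification shows $D_{{\mit\Gamma}, 1}(F_s, F_t) = \mu_H(s,t)$, and Theorem~\ref{thm:dim1} (2)$\Rightarrow$(1) yields $\dim \bar Q_{\mu_H}^{slim} \leq 1$.

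For (a) $\Rightarrow$ (b), Theorem~\ref{thm:dim1} (1)$\Rightarrow$(2) produces an oriented-tree realization $({\mit\Gamma}, \alpha; \{F_s\}_{s \in S})$ of $\mu_H$, which I may assume is minimal (every edge is traversed by some $F_s$, with $\alpha > 0$ throughout). The central claim is that the $\{0,1\}$-valuedness of $\mu_H$ forces ${\mit\Gamma}$ to be a star whose edges are uniformly oriented at its center. Granted this, the structure of $H$ as a multipartite extension of a quasi-complete digraph follows by partitioning $S$ via $s \sim t \Leftrightarrow F_s = F_t$: the parts $U_v := \{s : F_s = F_v\}$ are the parts of the extension, terminals with $F_s$ a singleton leaf form the complete part, and the source/sink type is read off the star's center orientation. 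To establish the star structure I argue by contradiction: a directed path $u \to v \to w$ in ${\mit\Gamma}$ (or a ``mixed-orientation'' configuration at an internal node) with positive $\alpha$'s would, by minimality, be witnessed by terminals $s, t$ whose subtrees lie disjointly on the two sides of $v$, yielding $\mu_H(s, t) \geq \alpha(uv) + \alpha(vw) > 0$; iterating over such configurations together with the 0-1 constraint forces every such path to collapse and the remaining edges to be uniformly oriented at the center.

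\emph{Main obstacle.} The delicate point is the rigorous proof of the star-uniformity claim. Because the subtrees $F_s$ can overlap intricately and $\alpha$ may be fractional, one must carefully locate terminals that actually witness a given offending configuration, and rule out scenarios where fractional edge lengths combine across walks to still yield $\{0,1\}$ values on a non-star tree. Once the star shape is pinned down, reading off the multipartite-extension structure and verifying the properties of quasi-complete digraphs is routine.
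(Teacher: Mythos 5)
Your direction (b) $\Rightarrow$ (a) is fine: it is exactly the construction in the paragraphs before the proposition (star realization of a quasi-complete digraph, extended over the parts of the multipartite extension) combined with Theorem~\ref{thm:dim1}, which is how the paper intends this half.

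The direction (a) $\Rightarrow$ (b) has a genuine gap, and it is precisely the step you flag as the ``main obstacle.'' Your central claim --- that $\{0,1\}$-valuedness of $\mu_H$ forces any minimal oriented-tree realization to be a star with edges uniformly oriented at the center --- is false as stated, and the contradiction you propose does not materialize. Take $S=\{s,t\}$ with $\mu_H(s,t)=1$, $\mu_H(t,s)=0$, and realize it on the directed path $v_1\rightarrow v_2\rightarrow v_3$ with $\alpha\equiv 1/2$, $F_s=\{v_1\}$, $F_t=\{v_3\}$: every edge carries positive length and lies on the geodesic from $F_s$ to $F_t$, yet the tree is not a star. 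The inequality you invoke, $\mu_H(s,t)\geq \alpha(uv)+\alpha(vw)>0$, is not in conflict with $\mu_H$ being $\{0,1\}$-valued unless the sum exceeds $1$; with fractional lengths the sum can equal $1$ exactly, so ``iterating over such configurations'' does not collapse anything. Moreover, minimality (every edge used by some pair) does not even guarantee that a single pair of terminals witnesses both edges of a given directed $2$-path. So the structure of $H$ cannot be read off by forcing the tree to be a star; what is true is only that \emph{some} star realization exists when (b) holds, which is the other direction. To close the gap you would need either a genuine normalization argument (showing any oriented-tree realization of a $\{0,1\}$-valued distance can be transformed into the unit-length star form, and then reading off the complete part and the source/sink type), or, closer to what the paper means by ``easily seen from Theorem~\ref{thm:dim1},'' pass through condition (3): write $\mu_H=\sum_{(A,B)\in{\cal A}}\alpha(A,B)\,\delta_{A,B}$ with ${\cal A}$ laminar and analyze directly which laminar weighted families of partial cuts can sum to the $0$--$1$ matrix of a simple digraph, concluding the multipartite quasi-complete structure of $H$. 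Neither argument is supplied in your proposal, so as it stands the implication (a) $\Rightarrow$ (b) is not proved.
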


\paragraph{Proof of Proposition~\ref{prop:realization}.}

Take an arbitrary $s \in S_0$.
We first claim 
$\mu_s \in (Q_{\mu}^{slim})^+$. 
Since $F_s$ is a single node $v_s$, 
we have $\mu_s(t^c) + \mu_s(u^r) 
= \mu(t,s) + \mu(s,u) 
= D_{\mit\Gamma,\alpha}(F_t,v_s) + D_{\mit\Gamma,\alpha}(v_s,F_u) 
\geq  D_{\mit\Gamma,\alpha}(F_t,F_u) = \mu(t,u)$ 
for $t,u \in S$.
Thus $\mu_s \in P_\mu^+$.
Next we give a description of $K(\mu_s)$.
Delete $v_s$ from $\mit\Gamma$.
Let ${\mit\Gamma}_1, {\mit\Gamma}_2, \ldots, {\mit\Gamma}_k$ 
be the resulting connected components.
For $i=1,2,\ldots, k$,
let $U_i$ be the set of elements $t \in S$ 
such that $F_t$ 
belongs to ${\mit\Gamma}_i$. 
Let $W$ be the set of elements $t \in S$ 
such that $F_t$ contains $v_s$.
Then $\{W, U_1, U_2,\ldots, U_k\}$ is a partition of $S$. 
A pair $(t^c,u^r) \in S^{c} \times S^r$ has an edge in $K(\mu_s)$
if and only if 
a shortest path 
from $F_t$ to $F_u$ can pass through the node $v_s$.
We remark that tracing an edge in reverse direction 
takes zero length. 
Then we see the following:
\begin{itemize}
\item[(a)] 
Pair $(u^c,t^r) \in {U_i}^c \times {U_j}^r$ 
has an edge if and only if $i \neq j$.
\item[(b)] Each pair $(u^c,t^r) \in W^c \times W^r$ has an edge.
\item[(c)] $s^c$ is incident to each element in $S^r$ and $s^r$ 
is incident to each element in $S^c$.
\end{itemize}
So there is no isolated node, 
and thus we have $\mu_s \in Q_{\mu}^{+}$.
By (c), $\mu_s$ is not a fat.
Thus $\mu_s \in (Q_{\mu}^{slim})^+$ and in particular 
$\mu_s \in  Q_{\mu,W}^+$ by (a,b).
Also we see: 
\begin{itemize}
\item[(d)] For $1 \leq i \leq k$ there is $t \in U_i \cap S_0$ such that
either $\mu_s(t^c) = 0$ or $\mu_s(t^r) = 0$.  
\end{itemize}
Indeed, by assumption, there is $t \in S_0$ 
such that $v_t$ is a node in $\mit\Gamma_i$ incident to $v_s$.
Then $t \in U_i$, and 
$\mu_s(t^c) = 0$ if $v_sv_t \in E\mit\Gamma$ and 
$\mu_s(t^r) = 0$ if $v_tv_s \in E\mit\Gamma$.
Next we claim:
\begin{myitem}
If a face $F$ of $(Q_{\mu}^{slim})^+$ contains $\mu_s$, 
then $F = [\mu_s,\mu_t]$ for some $t \in S_0$ with $v_sv_t \in E{\mit\Gamma}$ 
or $v_t v_s \in E{\mit\Gamma}$. 
\end{myitem}\noindent
If true, then we obtain the first statement (1) 
(since $(Q_\mu^{slim})^+$ is connected).
Perturb $\mu_s$ into $p$ 
so that $p \in (Q_{\mu}^{slim})^+$ 
and $EK(p) \subseteq EK(\mu_s)$ 
(i.e., $p$ belongs to a face containing $\mu_s$).
Let $X^-$ be the set of nodes $u \in S^{cr}$ 
with $p(u) < \mu_s(u)$, 
and let $X^+$ be the set of nodes $u \in S^{cr}$ 
with $p(u) > \mu_s(u)$. 
Recall (\ref{fact:movement}).
Necessarily $X^+ = N_{\mu_s}(X^-)$; 
otherwise there is an isolated node in $K(p)$.
We claim 
\begin{itemize}
\item[($*$)] $X^- = \bigcup_{j \in I} {U_j}^c$ 
or $X^- = \bigcup_{j \in I} {U_j}^r$ 
for some $I \subseteq \{1,2,\ldots, k\}$.
\end{itemize}
Suppose that 
both ${U_j}^c \cap X^-$ 
and ${U_j}^c \setminus X^-$ are nonempty.
Since $N_{\mu_s}({U_j}^c \cap X^-) \setminus W^r 
= N_{\mu_s}({U_j}^c \setminus X^-) \setminus W^r$ by (a), 
$W^r \subseteq N_{\mu_s}({U_j}^c \cap X^-)$ implies 
that ${U_j}^c \setminus X^-$ is isolated.
So $W^r \setminus N_{\mu_s}({U_j}^c \cap X^-)$
is nonempty, and has an edge incident to ${U_j}^c \setminus X^-$.
Let $W_0 \subseteq W$ 
with ${W_0}^r := W^r \setminus N_{\mu_s}({U_j}^c \cap X^-)$.
Necessarily each $F_t$ for $t \in W_0$ includes edge $e$ 
joining $v_s$ and $\mit\Gamma_j$ 
(otherwise $t^r$ is incident to all elements in ${U_j}^c$),
and moreover $e$ leaves $v_s$ 
(otherwise there is no edge between ${W_0}^r$ and ${U_j}^c$).
By this property, 
there is no edge joining ${W_0}^c$ and ${U_j}^r$ in $K(\mu_s)$ 
(and in $K(p)$).
Thus $p$ is a proper fat relative to $W_0$; 
a contradiction.
Also ${U_i}^c \cup {U_j}^r \subseteq X^-$ 
is impossible by (a,d).

We may suppose $X^- = \bigcup_{j \in I} {U_j}^c$.
We show $I = \{i\}$ for some $i$.
Suppose true.
Then 
we can see
$p = \mu_s + \epsilon (- \11_{{U_i}^c},\11_{N_{\mu_s}({U_i}^c)})$ for some $\epsilon > 0$.
By (d), there is $t \in U_i \cap S_0$ with $v_tv_s \in  E\mit\Gamma$ 
and $p \in [\mu_s,\mu_t]$, as required.
Suppose $|I| \geq 2$. 
Then $S^r \setminus W^r \subseteq N_{\mu_s}(X^-) = X^+$ by (a), and
$K(p)$ has no edge between $S^c \setminus X^-$ and $X^+$.
This means that $p$ is a proper fat 
relative to some $W' \subseteq W$; a contradiction.
In the argument above,
we can see that the perturbed $p$ never belongs 
to any degenerate set; so $Q_{\mu}$ has no degenerate set.
This implies (3). 
The claim (2) can  
be verified in a straightforward manner.

(4). Let $t$ be a terminal such that $F_t$ 
is a single node or a directed path.
Take any $s \in S_0$ with $v_s$ belonging to $F_t$.
Then $\mu_s$ belongs to $(Q_{\mu}^{slim})_{t}^+$.
Again perturb $\mu_s$ into $p \in (Q_{\mu})_{t}^+$.
It suffices to show $p \in (Q_{\mu}^{slim})_{t}^+$.
In the partition 
$\{W,U_1,U_2,\ldots,U_k\}$ for $K(\mu_s)$, 
$t$ belongs to $W$.
As above, consider $X^-, X^+$. 
Then $X^- = \bigcup_{j \in I} {U_j}^c$ or 
$X^- = \bigcup_{j \in I} {U_j}^r$ for some $I \subseteq \{1,2,\ldots, k\}$.
From the assumption that $F_t$ is 
a single node or a directed path,
one can see that
$t^r$ is incident to all nodes in $S^c$
except ${U_i}^c$ 
for which $\mit\Gamma_i$ includes the tail of $F_t$, 
and that $t^c$ is incident to all nodes in $S^r$
except ${U_j}^r$ 
for which $\mit\Gamma_j$ includes the head of $F_t$.
From this fact, either $I = \{i\}$ or $\{j\}$; 
otherwise edge $t^ct^r$ vanishes in $K(p)$ and 
this implies  $p \not \in (Q_{\mu})_{t}^+$.
Thus we can verify $p \in (Q_{\mu}^{slim})_{t}^+$ as above.

\subsection{Case $\dim T_{\mu} \leq 1$: reduction to minimum cost circulation}
\label{subsec:reduction}

Suppose $\dim T_{\mu} \leq 1$.
In this case, 
$T_{\mu}$ is also a tree.
Thus the argument in Section~\ref{subsec:proof} is applicable.
However, by (\ref{fact:technical})~(5) 
$T_{\mu}$ is a path 
isometric to a segment in $(\RR, D_{\infty}^+)$.
Therefore by (\ref{fact:embedding})~(2) 
there is a family $\{[a_s,b_s] \mid s \in S \}$ 
of segments in $\RR$ such that
\[
\mu(s,t) = (a_t - b_s)_+ \quad (s,t \in S).
\]
By using this expression, 
we show that $\mu$-MFP is reducible 
to the minimum cost circulation.
Let $(G,S,c)$ be a network.
For each terminal pair $(s,t)$ with 
$\mu(s,t) = (a_t - b_s)_+ > 0$, 
add new edge (terminal edge) $ts$ with edge-cost $-\mu(s,t)$.
Then consider the minimum cost circulation problem 
on the new network; 
this is a relaxation of $\mu$-MFP.
As is well-known, there 
is an integral minimum cost circulation.
This circulation can be decomposed into 
the sum of the incidence vectors 
for some (possibly repeating) cycles. 
If each cycle contains at most one terminal edge, 
then we obtain an integral optimal multiflow
by deleting the terminal edge from each cycle.
So suppose that there is a cycle $C$ 
containing at least two terminal edges.
Then $C$ is the union of 
terminal edges $t_0t_1, t_2t_3, \ldots, t_{k-1}t_{k}$
and $S$-paths $P_{1,2}, P_{3,4}, \ldots, P_{k-2,k-1}, P_{k,0}$, 
where $k$ is an odd integer, 
and $P_{i,i+1}$ is an $(t_i,t_{i+1})$-path.
We claim
\begin{equation}\label{eqn:cost<=value}
 \sum_{i=0,2,4,\ldots, k-1} \mu(t_{i+1},t_{i}) 
\leq \sum_{i=0,2,4,\ldots, k-1} \mu(t_{i-1},t_{i}),
\end{equation}
where we let $t_{-1} = t_k$.
The LHS ($=: \mu(C)$) is the negative of 
the cost of the cycle $C$ 
and the RHS is the total flow-value 
of $S$-paths $\{ P_{i,i+1}\}_{i=1,3,5,\ldots}$ (with unit flow-values).
Suppose that the claim (\ref{eqn:cost<=value}) 
is true.
By decomposing each cycle
into $S$-paths as above,
we obtain
an integral multiflow $f$
whose total flow-value $\val(\mu,f)$ is at least
the negative of  
the total cost of the mincost relaxation problem.
So $f$ is optimal.

The claim (\ref{eqn:cost<=value}) can be seen as follows.
Move point $x$ in $\RR$ as 
$a_{t_0} \rightarrow b_{t_1} \rightarrow a_{t_2} 
\rightarrow b_{t_3} \rightarrow \cdots 
\rightarrow  b_{t_k} \rightarrow a_{t_0}$.
In each odd step, the point $x$ 
moves in the negative direction since $a_{t_{i}} > b_{t_{i+1}}$.
In particular the total move over odd steps 
coincides with $\mu(C)$.
Since the point $x$ returns to the initial point,
$\mu(C)$ is at most the total move 
in the positive direction over even steps, 
which equals the RHS in (\ref{eqn:cost<=value}).

\section{Unbounded fractionality}\label{sec:unbounded}
The integrality theorem 
(Theorem~\ref{thm:integrality}) 
in the previous section
is best possible.
The goal of this section is to 
establish the unbounded fractionality property:
\begin{Thm}\label{thm:unbounded}
Let $\mu$ be a directed distance on $S$. 
\begin{itemize}
\item[{\rm (1)}] If $\dim T_{\mu} \geq 2$, 
then there is no positive integer $k$ such 
that $\mu$-MFP has a $1/k$-integral optimal multiflow
for every network $(G,S,c)$.
\item[{\rm (2)}] If $\dim \bar Q_{\mu}^{slim} \geq 2$, 
then there is no positive integer $k$ such 
that $\mu$-MFP has a $1/k$-integral optimal multiflow
for every totally Eulerian network $(G,S,c)$.
\end{itemize}
\end{Thm}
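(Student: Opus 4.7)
The plan is to exhibit, for each positive integer $k$, a network whose optimal $\mu$-MFP value lies outside the lattice of values achievable by $1/k$-integral multiflows; the $2$-dimensional geometry is what drives the denominators of these values up without bound.

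I would first fix a $2$-dimensional face $F$ of $T_{\mu}$ (for~(1)) or of $\bar Q_{\mu}^{slim}$ (for~(2)). By (\ref{fact:technical})~(3), $F$ is isometric to a $2$-dimensional polytope $F'\subseteq(\RR^{2},D_{\infty}^{+})$; combining (\ref{fact:embedding})~(2) with the local structure of $T_{\mu}$ around $F$ given by (\ref{fact:technical})~(2), one locates at least three terminals $s_{1},s_{2},s_{3}\in S$ whose regions $(T_{\mu})_{s_{i}}$ are adjacent to $F$ along pairwise-distinct boundary pieces, providing three linearly-independent ``pull'' directions in $F'$. Pick representatives $p_{i}\in(T_{\mu})_{s_{i}}\cap\overline{F}$.

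Next, for a given $k$, I would construct a test network $(G_{k},S,c_{k})$ with $k$ inner nodes $x_{1},\ldots,x_{k}$ wired so that, through the facility-location interpretation of Theorem~\ref{thm:T-dual}, the optimal map $\rho\colon VG_{k}\to T_{\mu}$ places $\rho(x_{1}),\ldots,\rho(x_{k})$ at $k$ evenly-spaced interior points of $F'$ along a prescribed direction, with each $\rho(s_{i})$ near $p_{i}$. The edges of $G_{k}$ come in two types: edges $x_{i}x_{i+1}$ enforcing the spacing, and edges between $x_{i}$ and the chosen $s_{j}$'s anchoring $\rho(x_{i})$ toward $p_{j}$, with small integer capacities chosen so that the optimal FLP value is a rational number whose reduced denominator is $\Theta(k)$. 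By Theorem~\ref{thm:T-dual}, this equals the optimal $\mu$-MFP value; since every $1/k$-integral multiflow has objective value in $(1/k)\cdot\ZZ[\{\mu(s,t)\}_{s,t\in S}]$, such a multiflow cannot attain this optimum once $k$ is large compared to the denominators of $\mu$.

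For~(2), the same construction is carried out inside $\bar Q_{\mu}^{slim}$; to make $G_{k}$ totally Eulerian I would attach to each directed edge its reverse with equal capacity, and apply Theorem~\ref{thm:R-dual} in place of Theorem~\ref{thm:T-dual}, with (\ref{fact:extension})~(2) in the role of (\ref{fact:extension})~(1). The main obstacle is the combinatorial construction forcing the optimal $\rho$ genuinely to spread over all $k$ inner nodes without collapse: this hinges on the three terminals $s_{1},s_{2},s_{3}$ exerting linearly-independent influences in $F'$, which is available precisely because $\dim F=2$, together with a careful choice of edge orientations that reflect the signs of coordinate differences in the $D_{\infty}^{+}$ metric. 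A secondary subtlety is to ensure that the optimal $\rho$ lands inside $F$ rather than escaping to a $1$-dimensional subface of $T_{\mu}$; this would be guarded against by adding penalty capacity to edges whose alternative optima would drift $\rho$ out of $F$, a standard perturbation argument using (\ref{fact:movement}).
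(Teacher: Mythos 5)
There is a genuine gap: the heart of your argument --- the network $(G_k,S,c_k)$ whose optimum provably has reduced denominator exceeding $k$ --- is described only by its intended effect, never constructed or verified. To bound the denominator of an LP optimum from below you must identify the optimal facility locations exactly, and the claim that suitable ``anchoring'' capacities force $\rho(x_1),\dots,\rho(x_k)$ to evenly spaced interior points of $F$ is precisely what needs proof; as written it assumes the conclusion. The supporting geometric claim is also unjustified: for a general (in particular nonmetric) $\mu$, a $2$-dimensional face of $T_\mu$ need not be adjacent to \emph{any} terminal region $(T_\mu)_s$, so the ``three linearly independent pulls'' from $(T_{\mu})_{s_1},(T_{\mu})_{s_2},(T_{\mu})_{s_3}$ may simply not exist; the reduction from nonmetric $\mu$ to workable metric data is a substantial step in its own right (in the paper it is Lemma~\ref{lem:dominant}, with a delicate case analysis involving fats and degenerate sets), and your proposal contains no substitute for it. For part (2) there are further problems: doubling every edge with its reverse changes the optimum rather than Eulerianizing the intended instance, and in the Eulerian case the dual is determined only modulo the circulation space ${\cal L}$, so any denominator argument must be invariant under adding node potentials --- this is why cyclic tightness and ${\cal C}$-extremality are needed, and nothing in your plan plays that role. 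Two smaller points: a reduced denominator that is merely $\Theta(k)$ does not preclude $1/k$-integrality unless it fails to divide $k$, and the restriction ``once $k$ is large compared to the denominators of $\mu$'' is admissible only if you add the observation that $1/k$-integrality for some $k$ implies it for every multiple of $k$.

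For comparison, the paper avoids constructing capacities altogether: by Edmonds--Giles (\ref{eqn:TDI}) it suffices to show the dual polyhedra ${\cal D}_{\mu,V}$, resp.\ ${\cal D}'_{\mu,V}$, are not $1/k$-integral (Proposition~\ref{prop:dual_unbounded}). The mechanism producing unbounded denominators is the family $\gamma_n$ of metrics of finer and finer triangulations of an $(\RR^2,D_\infty^+)$-triangle, which are extreme and ${\cal C}$-extreme (Lemma~\ref{lem:gamma}); since a $2$-face is an $\ell_\infty^+$-polygon by (\ref{fact:technical})~(3), a scaled copy $\beta\gamma_n$ embeds into it, and by the tight/cyclically tight extension correspondence (\ref{fact:extension}) one obtains a (cyclically) tight extension of $\mu$ containing $\beta\gamma_n$; decomposing it into extreme points of the dual polyhedron, extremality of $\gamma_n$ forces some extreme point to have an entry of size $\alpha\beta/n<1/k$ for large $n$, so the face containing it has no $1/k$-integral vector. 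If you want to salvage your primal route, you would essentially have to re-derive these dual certificates as explicit capacity functions, which is not easier.
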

In the following,
the edge set 
of a complete digraph (without loops) on a set $V$
is denoted by $E_V$.
We regard 
a function $g: V \times V \to \RR_+$ with 
zero diagonals $g(x,x) = 0$ for $x \in V$
as $E_V \to \RR_+$; we simply denote $g(x,y)$ by $g(xy)$.

We utilize 
Edmonds-Giles' lemma for rational polyhedra; 
see \cite[Section 22.1]{SchrijverLPIP}:
\begin{myitem}
For an integer $k > 0$, 
a rational polyhedron $P \subseteq \RR^n$ is $1/k$-integral
if and only if $\min \{ \langle c, x \rangle \mid x \in P \}$ is a $1/k$-integer
for each integral vector $c \in \ZZ^n$ for which the minimum is finite. \label{eqn:TDI} 
\end{myitem}\noindent
Here a polyhedron $P$ is said be {\em $1/k$-integral} 
if each face of $P$ contains a $1/k$-integral vector, and
$\langle \cdot, \cdot \rangle$ denotes the standard inner product in $\RR^n$.
For a finite set $V \supseteq S$, 
consider the following two unbounded polyhedra:
\begin{eqnarray*}
{\cal D}_{\mu,V} & := & 
\{ \mbox{$d$: metric on $V$} \mid d(st) \geq \mu(st)\ (st \in E_S)\} 
+ \RR_{+}^{E_V}, \\
{\cal D}'_{\mu,V} & :=& {\cal D}_{\mu,V} + {\cal L}, \\
&&  \mbox{where 
${\cal L} := \{ l \in \RR^{E_V} 
\mid l(C) = 0 \ (\mbox{all cycles $C$ in $V$}) \}$}.
\end{eqnarray*}
Note that ${\cal D}_{\mu,V}$ is pointed, and
${\cal D}'_{\mu,V}$ is not pointed.
Then $\min \{ \langle c,d \rangle \mid d \in {\cal D}_{\mu,V}\}$
is finite if and only if $c$ is nonnegative, 
and if finite, then it equals ${\rm MFP}^*(\mu; (V,E_V),S,c)$.
Also $\min \{ \langle c,d \rangle \mid d \in {\cal D}'_{\mu,V}\}$
is finite if and only if $((V,E_V),S,c)$ is totally Eulerian, 
and if finite, then it equals
${\rm MFP}^*(\mu; (V,E_V),S,c)$.
Hence it suffices to show:
\begin{Prop}\label{prop:dual_unbounded}
Let $\mu$ be a directed distance on $S$, 
and let $k$ be any positive integer.
\begin{itemize}
\item[{\rm (1)}] If $\dim T_{\mu} \geq 2$, then 
${\cal D}_{\mu,V}$ is not $1/k$-integral 
for some $V \supseteq S$.
\item[{\rm (2)}] If $\dim \bar Q_{\mu}^{slim}  \geq 2$, then 
${\cal D}'_{\mu,V}$ is not $1/k$-integral 
for some  $V \supseteq S$.
\end{itemize}
\end{Prop}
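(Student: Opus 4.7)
The plan is to invoke the Edmonds--Giles characterization~(\ref{eqn:TDI}): to prove non-$1/k$-integrality of ${\cal D}_{\mu,V}$ (resp.\ ${\cal D}'_{\mu,V}$), it suffices to exhibit, for each positive integer $k$, some $V \supseteq S$ and an integral capacity function $c$ for which $\min\langle c,d\rangle$ over the polyhedron is a rational number whose denominator (in lowest terms) does not divide $k$. For $c \in \ZZ_+^{E_V}$ (resp.\ for $c$ making $((V,E_V),S,c)$ totally Eulerian) this minimum is finite and, by Theorem~\ref{thm:T-dual}~(1) (resp.\ Theorem~\ref{thm:R-dual}~(2)), equals ${\rm FLP}^*(T_{\mu};(V,E_V),S,c)$ (resp.\ ${\rm FLP}^*(R;(V,E_V),S,c)$ for a slimmed section $R$). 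So everything reduces to building a network whose FLP value has the desired denominator.

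The next step is to exploit a two-dimensional face. Since $\dim T_\mu \geq 2$ (resp.\ $\dim\bar Q_\mu^{slim}\geq 2$), fix a $2$-face $F$ of $T_\mu$ (resp.\ of the slimmed section $R$). By (\ref{fact:technical})~(3), $F$ is isometric to a $2$-dimensional polytope inside $(\RR^2, D_\infty^+)$, and inside this isometric copy the directed distance behaves as the familiar asymmetric $\ell_\infty^+$ distance. Within $F$, I would construct an $N$-gadget for an integer $N > k$ coprime to $k$: introduce auxiliary nodes $v_0,v_1,\ldots,v_{N-1}$ joined consecutively by unit-capacity edges $v_iv_{i+1}$, plus a small number of heavy ``anchor'' edges connecting $v_0$ and $v_{N-1}$ to terminals whose images $\mu_s$ lie at two opposite vertices of a rectangle inside $F$. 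The anchors have capacity so large that any FLP placement sending some $v_i$ outside a prescribed diagonal segment of $F$ becomes strictly suboptimal, while the unit guiding edges force the $v_i$ to be distributed in equal increments of $1/N$ along the diagonal. The resulting FLP value is then of the form (fixed anchor contribution) $+\, \alpha/N$ with $\alpha$ a positive integer coprime to $N$, hence not a $1/k$-integer.

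For case~(2) the same gadget is used inside the slimmed section, replacing every edge $xy$ by the symmetric pair $xy,yx$ with equal capacity to enforce the totally Eulerian condition; Corollary~\ref{cor:topology}~(2) and the cyclic nonexpansiveness supplied by Lemma~\ref{lem:slimmed} ensure the FLP value is unchanged by the $\sim$-equivalence, so the same fractional value $\alpha/N$ persists. The $2$-dimensionality passes from $\bar Q_\mu^{slim}$ to the chosen section $R$ by definition of $\dim\bar Q_\mu^{slim}$, and the anchors are placed using the identification $R_s = (Q_\mu^{slim})_s^+$ given by (\ref{fact:embedding})~(1) and Corollary~\ref{cor:topology}~(2).

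The main obstacle will be calibrating the anchor capacities so that the optimal FLP placement is forced to use the chosen $2$-face $F$ rather than escaping to a cheaper lower-dimensional region of $T_\mu$ (or of the slimmed section). Concretely, one must show that any placement in which some $v_i$ leaves $F$, or in which the anchor endpoints are not aligned with the prescribed corners, strictly increases the objective by at least the $1/N$-scale fluctuations of the gadget; this is the technical heart of the argument and is handled by a case analysis using property (3) of~(\ref{fact:technical}) to compare $D_\infty$ across adjacent faces, together with the existence of isometric embeddings of (cyclically) tight extensions provided by~(\ref{fact:extension}).
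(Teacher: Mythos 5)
Your logical frame (exhibit, for each $k$, a set $V\supseteq S$ and an integral capacity $c$ whose optimum is not a $1/k$-integer, then invoke (\ref{eqn:TDI})) is sound, but the construction that is supposed to deliver the fractional value does not work, and it is exactly at the point you yourself flag as ``the technical heart.'' A chain $v_0,v_1,\ldots,v_{N-1}$ of unit-capacity edges whose endpoints are anchored at points $a,b$ of a $2$-face contributes, at the optimum, exactly $D_\infty(a,b)$: by the triangle inequality $\sum_i D_\infty(\rho(v_i),\rho(v_{i+1}))\geq D_\infty(a,b)$, and equality is attained by placing \emph{all} intermediate nodes at $a$ (or anywhere along a geodesic). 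Nothing forces the $v_i$ to be spread in increments of $1/N$, and even if they were, the objective value would be unchanged; the LP optimum only sees total distances, which are determined by the anchor positions, i.e.\ by the fixed data $\mu$. So the claimed value ``anchor contribution $+\ \alpha/N$'' has no mechanism behind it, and the denominators you can produce this way do not grow with $N$. Producing optima (equivalently, vertices of ${\cal D}_{\mu,V}$ or ${\cal D}'_{\mu,V}/{\cal L}$) with arbitrarily large denominator is precisely the nontrivial content of the proposition, and it is not obtained by a one-dimensional chain gadget; one needs genuinely two-dimensional ``conflict'' structure whose extreme metrics have fine granularity. This is what the paper supplies: it embeds the triangulation metrics $\gamma_n$ of a plane triangle into the $2$-face via tight (resp.\ cyclically tight) extensions using (\ref{fact:extension}), proves $\gamma_n$ is extreme and ${\cal C}$-extreme (Lemma~\ref{lem:gamma}), and then decomposes the extension into extreme points of ${\cal D}_{\mu,V}$ (resp.\ ${\cal D}'_{\mu,V}/{\cal L}$); extremality forces one summand to contain a scaled copy of $\gamma_n$, hence an edge of value at most $(\mathrm{diam}\, Q_\mu^+)/n < 1/k$, so that face has no $1/k$-integral vector. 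No analogue of this granularity argument appears in your proposal.

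Two further gaps. First, for nonmetric $\mu$ you work directly with $T_\mu$ and a slimmed section, but the embedding machinery (\ref{fact:extension}) is only available for \emph{metrics}; the paper must first reduce to a minimal (resp.\ ${\cal C}$-minimal) metric $d$ with $\dim T_d\geq 2$ (resp.\ $\dim \bar Q_d\geq 2$) via Lemma~\ref{lem:dominant}, together with Lemma~\ref{lem:extend} to transfer (cyclic) tightness from $d_S$ to extensions; this reduction is the longest part of the paper's proof and is absent from yours. Second, in case (2) you symmetrize every edge to enforce the totally Eulerian condition, but this changes the objective to paying $d(xy)+d(yx)$ on each gadget edge, and you give no argument that whatever fractional structure the gadget was meant to create survives this change, nor that the optimum over ${\cal D}'_{\mu,V}$ (where one may also shift by the lattice ${\cal L}$ of potential differences) retains the fractional value. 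As it stands, the proposal reproduces the easy outer shell of the argument but is missing the idea that actually produces unbounded denominators.
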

Indeed, if a $1/k$-integral optimal multiflow always exists,
then the optimal value is always $1/k$-integral, 
and ${\cal D}_{\mu,V}$ is 
$1/k$-integral for all $V$ by (\ref{eqn:TDI}).
The rest of this section is devoted to 
the proof of this proposition.
We note 
the following relation for two metrics $d,d'$ on $V$, 
which follows from the cycle decomposition of a circulation.
\begin{myitem}
$d \equiv d' \mod {\cal L}$ if and only if
$d(xy) = d'(xy) - p(x) + p(y)$ ($xy \in E_V$)
for some $p: V \to \RR$. 
\label{clm:congruent}
\end{myitem}\noindent
\subsection{Preliminary: minimal and extreme metrics}
We begin with preliminary arguments.
A metric $d \in {\cal D}_{\mu,V}$ 
is said to be {\em minimal}
if there is no other metric $d' \in {\cal D}_{\mu,V}$ with 
$d' \neq d$ and $d' \leq d$, and
is said to be {\em ${\cal C}$-minimal}
if there is no other metric $d' \in {\cal D}_{\mu,V}$
with $d' \not \equiv d \mod {\cal L}$ and
$d'(C) \leq d(C)$ for all cycles $C$.

We first give characterizations 
of minimal and ${\cal C}$-minimal metrics. 
Let $d$ be a  metric on $S$. 
Then $d$ defines 
the equivalence relation on $S$
by $x \sim_d y$ $\defarrow$ $d(xy)= d(yx) = 0$.
Let $[x]$ denote the equivalence class including $x$, 
and let $[xy]$ denote the set of edges from $[x]$ to $[y]$.
An edge $xy \in E_S$ 
is said to be {\em extremal}
if there is no edge $st \in E_S \setminus [xy]$ with
$d(st) = d(sx) + d(xy) + d(yt)$.
An edge $xy$ is extremal if and only 
if $x'y' \in [xy]$ is extremal. 
So a class $[xy]$ is 
said to be {\em extremal} if $xy$ is extremal. 
Let $H_{\mu,d}$ be 
the directed graph on $S$ 
with edge set $EH_{\mu,d} = \{st \in E_S \mid d(st) = \mu(st)\}$.
\begin{Lem}\label{lem:minimal}
Let $d$ be a directed metric in ${\cal D}_{\mu,S}$. 
\begin{itemize}
\item[{\rm (1)}] $d$ is minimal
if and only if every extremal class meets an edge in $H_{\mu,d}$. 
\item[{\rm (2)}] $d$ is ${\cal C}$-minimal
if and only if 
every extremal class meets a cycle in $H_{\mu, d}$. 
\end{itemize}
\end{Lem}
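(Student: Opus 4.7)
The plan is to view both parts as statements about the (non-)existence of small feasible decrements at $d$ in the relevant polyhedron: for (1), metrics in $\mathcal{D}_{\mu,S}$ ordered pointwise; for (2), the same ordered by cycle sums modulo the potential-difference space $\mathcal{L}$. In each case the candidate decrement direction is $-\mathbf{1}_{[xy]}$, possibly corrected by a potential $\nabla p$, and the combinatorial hypothesis on $H_{\mu,d}$ will correspond precisely to the feasibility of the associated difference-constraint system.

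For part~(1), the contrapositive of ``only if'' is easy: given extremal $[xy]$ with $[xy] \cap H_{\mu,d} = \emptyset$, set $d' := d - \epsilon \mathbf{1}_{[xy]}$ with $\epsilon := \min_{uv \in [xy]}(d(uv)-\mu(uv)) > 0$. Any failure of the triangle inequality for $d'$ would have exactly one edge in $[xy]$, and collapsing via the $\sim_d$-equivalences $d(uu') = d(u'u) = 0$ would yield $st \notin [xy]$ with $d(st) = d(sx) + d(xy) + d(yt)$, violating extremality; hence $d'$ is a metric strictly below $d$ in $\mathcal{D}_{\mu,S}$. For the ``if'' direction I would take a metric witness $d' \leq d$, $d' \neq d$, and set $E^+ := \{uv : d'(uv) < d(uv)\}$; a direct check shows $E^+$ is a union of $\sim_d$-classes disjoint from $H_{\mu,d}$. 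The crux is to find an extremal class inside $E^+$. Assuming none exists, pick $[x_0y_0] \subseteq E^+$ maximizing $d(x_0y_0)$ over $E^+$; non-extremality produces $(x_1,y_1) \notin [x_0y_0]$ with $d(x_1y_1) = d(x_1x_0) + d(x_0y_0) + d(y_0y_1)$, and the triangle inequality for $d'$ gives $d(x_1y_1) - d'(x_1y_1) \geq d(x_0y_0) - d'(x_0y_0) > 0$, so $[x_1y_1] \subseteq E^+$; maximality then pins $d(x_1x_0) = d(y_0y_1) = 0$. Iterating produces an infinite sequence of distinct classes $[x_iy_i] \subseteq E^+$ with $d(x_{i+1}x_i) = d(y_iy_{i+1}) = 0$; finiteness forces a repeat $[x_ny_n] = [x_0y_0]$, so $x_n \sim_d x_0$ and $y_n \sim_d y_0$, and triangle inequalities applied in both directions around the cycle (using the reverse chain $d(x_0 x_k) \leq d(x_0 x_n) + \sum_{\ell \geq k} d(x_{\ell+1}x_\ell) = 0$) collapse all intermediate $[x_ky_k]$ to $[x_0y_0]$, contradicting distinctness.

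For part~(2), the contrapositive of ``only if'' proceeds analogously: given extremal $[xy]$ meeting no $H_{\mu,d}$-cycle, try $d' := d - \epsilon \mathbf{1}_{[xy]} + \nabla p$ for small $\epsilon > 0$ and a potential $p : S \to \mathbb{R}$. Extremality keeps $d'$ a metric, and for small $\epsilon$ the constraint $d' \geq \mu$ on $E_S$ reduces to $p(v)-p(u) \geq 0$ on $H_{\mu,d} \setminus [xy]$ and $p(v)-p(u) \geq \epsilon$ on $H_{\mu,d} \cap [xy]$ (nonnegativity of $d'$ contributes no new constraint since $\{uv : d(uv) = 0\} \subseteq H_{\mu,d}$ by $d \geq \mu \geq 0$); the no-positive-weighted-cycle feasibility criterion matches the hypothesis exactly. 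Then $d'(C) - d(C) = -\epsilon |C \cap [xy]| \leq 0$, strict for $C = (x,y)$, so $d' \not\equiv d \bmod \mathcal{L}$. For the ``if'' direction I would reduce to~(1): given $d' \in \mathcal{D}_{\mu,S}$ with $d'(C) \leq d(C)$ for all cycles and $d' \not\equiv d \bmod \mathcal{L}$, the two-sided system $\mu(uv)-d'(uv) \leq p(v)-p(u) \leq d(uv)-d'(uv)$ on $E_S$ has cycle sums $\mu(C)-d'(C) \leq 0 \leq d(C)-d'(C)$ (upper by hypothesis, lower from $d' \geq \mu$ summed around $C$), hence is feasible; the shift $\tilde d' := d' + \nabla p$ is then a metric in $\mathcal{D}_{\mu,S}$ with $\tilde d' \leq d$ and $\tilde d' \neq d$, so~(1) forces an extremal class of $d$ missing $H_{\mu,d}$, contradicting the stronger~(2)-hypothesis of meeting $H_{\mu,d}$-cycles.

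The main obstacle is the cyclic collapsing argument in (1)'s ``if'' direction: the maximality of $d(x_0y_0)$, the equality-in-triangle-inequality structure at each extremality-failure step, and the $\sim_d$-equivalences must be orchestrated in concert to close a sequence of distinct classes into a genuine cycle and then force every class in it to coincide with the initial one. Part~(2) then reduces to~(1) by standard two-sided difference-constraint feasibility.
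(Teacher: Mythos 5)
Your part (1) and the ``only if'' half of (2) are essentially fine: your $E^+$/maximal-class chain argument is a correct fleshing-out of the paper's one-line triangle-inequality claim, and your perturbation $d-\epsilon\mathbf{1}_{[xy]}$ corrected by a potential with the no-positive-cycle criterion matches the paper's construction (which builds the potential from the strong-component decomposition of $H_{\mu,d}$). The genuine gap is in the ``if'' half of (2). You assert that the two-sided system $\mu(uv)-d'(uv)\le p(v)-p(u)\le d(uv)-d'(uv)$ is feasible because the two \emph{pure} cycle conditions $\mu(C)\le d'(C)$ and $d'(C)\le d(C)$ hold. That is not the feasibility criterion for an interval difference-constraint system: each lower bound may also be traversed backwards as the upper bound $p(u)-p(v)\le d'(uv)-\mu(uv)$, so one must exclude negative \emph{mixed} cycles, and your hypotheses do not. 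Concretely, take $S=\{x,y,z\}$ with $\mu(xz)=2$ and $\mu=0$ otherwise, $d(xy)=d(yz)=1$, $d(xz)=2$, $d(yx)=d(zy)=d(zx)=10$, and $d'(xy)=d'(yz)=d'(xz)=2$, $d'(yx)=d'(zy)=d'(zx)=8$. Both are metrics $\ge\mu$, $d'(C)\le d(C)$ holds for every cycle, and $d'\not\equiv d \bmod {\cal L}$ (compare the $2$-cycle on $x,y$); yet the constraint for $(x,z)$ forces $p(z)=p(x)$ (since $\mu(xz)=d'(xz)=d(xz)$), while those for $(x,y)$ and $(y,z)$ force $p(y)-p(x)\le-1$ and $p(z)-p(y)\le-1$, so the system is infeasible and no potential shift of $d'$ lands between $\mu$ and $d$. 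Hence your reduction to (1) does not go through as written.

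The paper sidesteps this by shifting $d$ instead of $d'$: with $(d*p)(xy):=d(xy)-p(x)+p(y)$ it shows that $d$ is ${\cal C}$-minimal iff $d*p$ is minimal for every $p$ with $d*p\in{\cal D}_{\mu,S}$ --- the nontrivial direction of this equivalence only needs the \emph{one-sided} system $p(u)-p(v)\le d(uv)-d'(uv)$, whose feasibility is exactly your hypothesis $d'(C)\le d(C)$ --- and then notes that $d$ and $d*p$ have the same extremal classes, and that on a cycle of $H_{\mu,d}$ met by an extremal class the constraints $d*p\ge\mu$ together with $d=\mu$ along the cycle force $p$ to be constant there, so that class meets an edge of $H_{\mu,d*p}$ and part (1) applies to $d*p$. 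If you want to keep the ``reduce to (1)'' shape of your argument, the potential has to be placed on $d$ in this one-sided way; the two-sided feasibility step you rely on is false in general.
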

We note that $d(xy) = d(x'y')$ for $x'y' \in [xy]$
and $xy,yx \in H_{\mu,d}$ if $[x] = [y]$.
In particular $xy$ is never extremal if $[x] = [y]$ 
(and $d \neq 0$).
In most cases, we may consider the case where
each class consists of one edge.
\begin{proof}
(1). If part:
By condition we cannot decrease
$d$ on extremal classes. 
Consequently we cannot decrease $d$ 
on non-extremal classes by the triangle inequality.
Only-if part: suppose that some extremal class $[xy]$
fulfills $d(uv) > \mu(uv)$ for $uv \in [xy]$. 
We can decrease $d$ on $[xy]$ 
with keeping triangle inequality.

(2).
For $p: S \to \RR$, let $d * p$ be defined by
$
(d * p)(xy) = d(xy) - p(x) + p(y) 
$ for $xy \in E_S$. 
By definition and (\ref{clm:congruent}), we see:
\begin{itemize}
\item [($*1$)] 
$d$ is ${\cal C}$-minimal if and only if
$d * p$ is minimal for every 
$p: S \to \RR$ with 
$d * p \in {\cal D}_{\mu,S}$.
\item[($*2$)] The set of 
extremal edges of $d$ is the same as that of $d * p$.
\end{itemize}
First we show the if part (of (2)).
Take any $p: S \to \RR$ 
with $d * p \in {\cal D}_{\mu,S}$, 
and take any extremal class $[st]$ of $d * p$. 
Since $[st]$ is also an extremal class for $d$ by ($*2$),
$[st]$ meets a cycle $C$ in $H_{\mu,d}$.
Therefore $\sum_{xy \in C} (d * p) (xy) 
= \sum_{xy \in C} d(xy) = \sum_{xy \in C}\mu(xy)$.
By $d(xy) - p(x) + p(y) = (d * p)(xy) \geq \mu(xy)$, 
$p$ is constant on $C$; in particular 
$(d*p)(xy) = d(xy) = \mu(xy)$ on $C$.
This means that $d*p$ is minimal by (1).
Thus $d$ is ${\cal C}$-minimal by ($*1$).

We show the only-if part.
Suppose that some extremal class $[st]$ does not meet
any cycle.
In this case, there is a node subset $U \subseteq S$
such that $[s] \subseteq U$, $[t] \subseteq S \setminus U$, 
and there is no edge 
entering $U$
(consider the strong component decomposition of $H_{\mu,d}$).
For a sufficiently small $\epsilon > 0$, 
let $p: S \to \RR$ be defined by $p(u) =  0$ 
for $u \in U$ and $p(u) = \epsilon$ for $u \not \in U$.
Then $d*p \in {\cal D}_{\mu,S}$ 
and $(d*p)(uv) = d(uv) + \epsilon > \mu(uv)$ for $uv \in [st]$.
Thus $d*p$ is not minimal, and $d$ is not ${\cal C}$-minimal.
\end{proof}

Second
we recall the notion of extreme metrics.
A metric $d$ on a finite set $V$ is called {\em extreme} if 
$d$ lies on an extreme ray the polyhedral cone ${\cal M}_V$ formed by 
all metrics on $V$.
Also $d$ is {\em ${\cal C}$-extreme}
if the projection $d / {\cal L}$ 
is extreme in ${\cal M}_V / {\cal L}$.
We give a family of extreme metrics.
Consider the refinement sequence
of triangulations 
of a plane triangle
$\{ (x,y) \in \RR^2 \mid 0 \leq y \leq x \leq 1 \}$ 
in $(\RR^2,D_{\infty}^+)$ 
by congruent triangles,
as in Figure~\ref{fig:gamma}.
\begin{figure}
\center
\epsfig{file=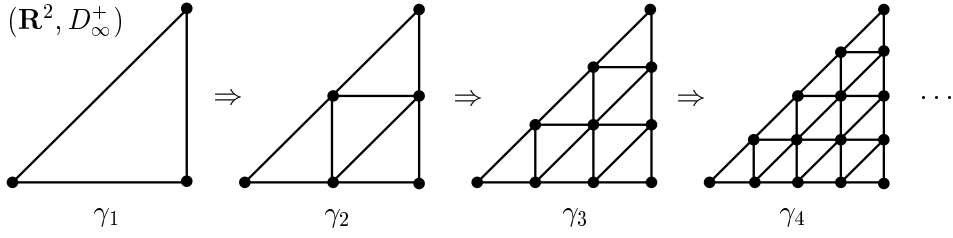,width=0.9\hsize}
\caption{Triangulations}
\label{fig:gamma}
\end{figure}
Let $\gamma_n$ be the metric obtained 
by restricting $(\RR^2, D_{\infty}^{+})$ to 
the vertex set of the $n$-th triangulation.
\begin{Lem}\label{lem:gamma}
$\gamma_n$ is extreme and ${\cal C}$-extreme. 
\end{Lem}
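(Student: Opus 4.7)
The plan is to exploit the tight triangle inequalities that $\gamma_n$ inherits from the asymmetric norm $D_\infty^+$, and to push a triangle-by-triangle rigidity argument across the triangulated grid.

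The starting structural facts about $\gamma_n$ on the vertex set $V_n$ of the $n$-th triangulation of $\{(x,y):0\le y\le x\le 1\}$ are: (a) $\gamma_n(q,p)=0$ whenever $p\le q$ componentwise; (b) every unit edge of the triangulation has forward length $1/n$ and backward length $0$; (c) $\gamma_n$ is additive along every monotone straight path and, more generally, along the two-leg path $p\to r\to q$ through $r:=(\min(p_1,q_1),\min(p_2,q_2))\in V_n$; (d) every small sub-triangle, whether upward or downward, has three forward edges of length $1/n$ and three backward edges of length $0$.

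For extremeness, assume $\gamma_n=d_1+d_2$ with $d_1,d_2$ directed metrics on $V_n$. Tight triangle inequalities of $\gamma_n$ pass to each $d_i$, and (a) forces $d_i(q,p)=0$ for $p\le q$. On any small sub-triangle $\{P,Q,R\}$ with forward edges $P\to Q$, $Q\to R$, $P\to R$, the bounds $d_i(P,Q)\le d_i(P,R)+d_i(R,Q)=d_i(P,R)$ and $d_i(Q,R)\le d_i(P,R)$, applied to both $d_1$ and $d_2=\gamma_n-d_1$, force $d_1(P,Q)=d_1(Q,R)=d_1(P,R)=:\lambda(T)/n$. Every interior forward unit edge is shared between an upward and a downward triangle, so the face-adjacency graph on small triangles is connected and $\lambda(T)\equiv\lambda\in[0,1]$ is a single global constant. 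The two-leg tight paths in (c) then lift $d_1=\lambda\gamma_n$ from unit edges to every pair in $V_n$, showing that $\gamma_n$ is extreme.

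For ${\cal C}$-extremeness, suppose $\gamma_n\equiv d_1+d_2\mod{\cal L}$ with $d_1,d_2$ metrics; by (\ref{clm:congruent}) the distance $d_1+d_2$ is itself a metric sharing all cycle sums with $\gamma_n$. Rephrase the small-triangle rigidity in cycle-sum language: on each small sub-triangle write $\alpha_i,\beta_i,\gamma_i$ for the three $2$-cycle sums and $\delta_i,\epsilon_i$ for the forward and reverse $3$-cycle sums of $d_i$. The six directed triangle inequalities for $d_i$ reduce to $\alpha_i,\beta_i,\gamma_i\le\min(\delta_i,\epsilon_i)$. Since $\gamma_n$ has $\alpha=\beta=\gamma=\epsilon=1/n$ and $\delta=2/n$ on each small triangle, summing for $d_1+d_2$ and using $\alpha_1+\alpha_2=1/n=\epsilon_1+\epsilon_2$ pins down $\alpha_i=\beta_i=\gamma_i=\epsilon_i=:\lambda_i/n$ and $\delta_i=2\lambda_i/n$ with $\lambda_1+\lambda_2=1$. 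Shared $2$-cycle sums across adjacent small triangles force $\lambda_i$ to be globally constant, and combining this with the two-leg tight paths in (c) extends cycle-sum agreement from the small triangulation triangles to every cycle in the complete digraph on $V_n$, yielding $d_i\equiv\lambda_i\gamma_n\mod{\cal L}$.

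The main obstacle is the ${\cal C}$-extreme step: the identity $d_i(q,p)=0$ used for extremeness is not available modulo ${\cal L}$, since potentials can arbitrarily redistribute individual edge values. The rigidity must be recast entirely in cycle-sum language, and the bridge from ``cycle sums agree on the small triangulation triangles'' to ``cycle sums agree on every cycle of the complete digraph on $V_n$'' requires a careful decomposition of arbitrary cycles via the tight two-leg paths in (c), which is the most delicate piece of the plan.
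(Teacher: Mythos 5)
Your overall route is the paper's route: establish rigidity of the decomposition on each small triangle of the triangulation, propagate the proportionality constant through the connected face-adjacency structure, and then globalize. Your extremeness argument is sound: backward distances are forced to vanish in each summand, the tight inequalities $d_i(P,Q)\le d_i(P,R)$, $d_i(Q,R)\le d_i(P,R)$ applied to both summands pin all three forward values of a small triangle to a common $\lambda(T)/n$, shared edges make $\lambda$ global, and tight monotone staircase paths (note: staircase, not ``straight'' --- for a general comparable pair you need diagonal steps followed by horizontal or vertical ones) together with the two-leg detour through $r=(\min(p_1,q_1),\min(p_2,q_2))$ lift $d_1=\lambda\gamma_n$ to all pairs. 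Your cycle-sum reformulation of the local step for ${\cal C}$-extremeness is also correct and is in fact cleaner than the paper's appeal to $\gamma_1$; the only small omission there is that pinning $\delta_i=2\lambda_i/n$ needs the identity $\delta_i+\epsilon_i=\alpha_i+\beta_i+\gamma_i$ (the sum of all six directed edge values), which you assert but do not state.

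The genuine gap is exactly the step you flag and then leave open: passing from ``cycle sums of $d_i$ agree with $\lambda_i\gamma_n$ on cycles inside small triangles'' to agreement on \emph{every} cycle of the complete digraph on $V_n$. The two-leg tight paths you invoke are not by themselves enough; two further ingredients are needed, and neither appears in your plan. First, you must show that tight path equalities of $\gamma_n$ transfer to $d_1$ and $d_2$ even though you only know $d_1+d_2\equiv\gamma_n\bmod{\cal L}$: by (\ref{clm:congruent}) the discrepancy is a potential difference $-\pi(x)+\pi(y)$, which telescopes along any path with fixed endpoints, so an equality $\gamma_n(p,q)=\sum_e\gamma_n(e)$ along a path forces the same equality for $d_1+d_2$ and hence for each nonnegative summand. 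This lets you replace every chord of an arbitrary cycle by a $\gamma_n$-tight staircase walk in the triangulation graph without changing $d_i$-cycle sums. Second, you need a decomposition of the resulting closed walk: viewed as a vector in the directed-edge space, it is an integer combination of $2$-cycles of graph edges and (either orientation of) boundary triangles of the faces --- the planar cycle-space argument --- and the $d_i$-sums of all of these are already pinned at $\lambda_i$ times the $\gamma_n$-sums. These are precisely the two observations the paper records (``for an arbitrary cycle $C$ there is a cycle $C'$ in the graph with $d(C)=d(C')$, and $d(C')=\sum\pm d(C_i)$ with each $C_i$ in a triangle''); without the second one in particular, your bridge does not close, since no amount of rerouting through comparable points reduces a long graph cycle to triangle data.
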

\begin{proof}
One can easily verify that $\gamma_1$ is extreme.
Suppose $\gamma_n = d' + d''$ for
some metrics $d', d''$.
We show 
$\gamma' = \alpha \gamma_n$ for some positive $\alpha$.
We observe that the restriction of $\gamma_n$ 
to each triangle is isometric to $(1/n) \gamma_1$, 
which is extreme.
Since 
the triangulation is {\em connected},
we can take a common positive $\alpha$ such that 
$d'(pq) = \alpha \gamma_n(pq)$ 
for $p,q$ in any triangle.
For an arbitrary pair $p,q$ of vertices, 
there is a path 
$p=p_1,p_2,\dots,p_m = q$
lying on the triangulation graph 
such that
$\gamma_n(pq) = \sum_{i=1}^{m-1} \gamma_n(p_ip_{i+1})$.
By $\gamma_n = d' + d''$, 
this equality
must hold for $d'$.
Hence, we have 
$
d'(pq)  =  \sum_i d'(p_ip_{i+1})
 = \alpha \sum_i \gamma_n(p_ip_{i+1})
=  \alpha \gamma_n(pq)$.
Thus we have $d' = \alpha \gamma_n$.

Next we consider the ${\cal C}$-extremality.
As above,
the ${\cal C}$-extremality 
of $\gamma_n$ $(n \geq 2)$ 
reduces to that of $\gamma_1$ 
by the following observations:
For an arbitrary cycle $C$
there is a cycle  $C'$ in the graph with $d(C) = d(C')$, 
and for an arbitrary cycle $C'$ in the graph
there are cycles $C_1,C_2,\ldots,C_m$ each of 
which belongs to a triangle such that 
$d(C') = \sum_{i=1}^m \pm d(C_i)$.
The ${\cal C}$-extremality of $\gamma_1$ 
also follows from a routine calculation, 
and is left to readers. 
{\em Sketch:} suppose $\gamma_1 \equiv d' + d''  \mod {\cal L}$.
By (\ref{clm:congruent}),  
$\gamma_1(xy) + \gamma_1(yz) = \gamma_1(xz)$
implies the same equality for $d' + d''$, 
which in turn implies the same equality for $d'$. 
By using it, we can show $d'(C) = \alpha \gamma_1(C)$ for 
$\alpha := d'(xy) + d'(yx)$. 
\end{proof}

\subsection{Proof (metric case)}

Suppose that $\mu$ is a metric.
In this case, ${\cal D}_{\mu,V}$
is represented as
\[
{\cal D}_{\mu,V}  =  \{ \mbox{$d$: metric on $V$} 
          \mid d(st) = \mu(st)\ (st \in E_S) \}  +  \RR_{+}^{E_V}.
\]
Recall the notions in Section~\ref{subsec:tightspans}~C.
Then, metric $d$ is minimal in ${\cal D}_{\mu,V}$ if and only 
if $d$ is a tight extension of $\mu$.
Also $d$ is ${\cal C}$-minimal in ${\cal D}_{\mu,V}$ if and only if
$d$ is a cyclically tight extension of $\mu$.

We first prove Proposition~\ref{prop:dual_unbounded}~(2).
$\bar Q_{\mu}^{slim} = \bar Q_{\mu}$ 
by Proposition~\ref{prop:slim}.
We can take 
a balanced section $R$ in $Q_{\mu}^+$ 
containing a $2$-dimensional face $F$, 
which is isometric to a polygon in $(\RR^2, D_{\infty}^+)$ 
by (\ref{fact:technical})~(3).
Therefore we can take a subset $U$ in $F$ 
whose metric induced by $D_{\infty}^+$ is isometric to 
$\beta \gamma_n$ for some $\beta > 0$.
Fix an integer $k > 0$. 
By (\ref{fact:extension})~(2), 
for an arbitrary integer $n > 0$, 
we can take a cyclically tight extension $d$ on $V$
having $\beta \gamma_n$ as a submetric.
Take a sufficiently large $n$.
Since $d / {\cal L}$ belongs to a bounded face 
of ${\cal D}_{\mu,V} / {\cal L}$,
we can
decompose $d$ into a convex combination
of cyclically tight extensions $d_1,d_2,\ldots, d_m$ 
in modulo ${\cal L}$
such that $d_i/{\cal L}$ is an extreme point 
in ${\cal D}_{\mu,V}/{\cal L}$ for $i=1,2,\ldots, m$.
Since $d$ has $\beta \gamma_n$ as a submetric 
and $\gamma_n$ is ${\cal C}$-extreme, 
some $d_i$ has a submetric $\gamma$ 
with $\gamma \equiv \alpha \beta \gamma_n \mod {\cal L}$
for some $\alpha > 0$.
Therefore
$d_i(pq) + d_i(qp) = \alpha \beta/n$ for some $pq \in E_V$. 
Since $d_i$ is cyclically tight, 
it is embedded into $(Q_\mu^+, D_{\infty})$.
Therefore $\alpha \beta$ is 
bounded by the diameter of $Q_{\mu}^+$ (bounded set).
Since $n$ is sufficiently large, 
we have $\alpha \beta/n < 1/k$. 
Hence face $d_i + {\cal L}$ has no $1/k$-integer vector.

Proposition~\ref{prop:dual_unbounded} (1) can be shown 
in a similar manner.
Since $T_{\mu}$ has a 2-dimensional face,   
we can take a tight extension $d$ having $\beta \gamma_n$ 
as a submetric.
Since ${\cal D}_{\mu,V}$ is pointed and 
$d$ belongs to a bounded face in ${\cal D}_{\mu,V}$ by minimality,
we can 
decompose $d$ into a convex combination 
of extreme points in ${\cal D}_{\mu,V}$.
For a sufficiently large $n$, 
one of the summands is not $1/k$-integral as above.

\subsection{Proof (general case)}
Suppose that $\mu$ is not a metric.
For a distance $g$ on $S$ and a subset $U \subseteq S$,
the restriction of $g$ to $U$ is denoted by 
$g_U$.
\begin{Lem}\label{lem:extend}
Let $d$ be a directed metric in ${\cal D}_{\mu,V}$.
\begin{itemize}
\item[{\rm (1)}] If $d_S$ is minimal in ${\cal D}_{\mu,S}$
  and $d$ is a tight extension of $d_S$, 
  then $d$ is minimal in ${\cal D}_{\mu,V}$.
\item[{\rm (2)}] If $d_S$ is ${\cal C}$-minimal in ${\cal D}_{\mu,S}$ 
  and $d$ is a cyclically tight extension of $d_S$, 
 then $d$ is ${\cal C}$-minimal in ${\cal D}_{\mu,V}$.
\end{itemize}
\end{Lem}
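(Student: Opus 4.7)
The plan is to prove both parts by reducing the minimality (resp.\ ${\cal C}$-minimality) of $d$ in ${\cal D}_{\mu,V}$ to the conjunction of the given hypotheses: minimality (resp.\ ${\cal C}$-minimality) of the restriction $d_S$ in ${\cal D}_{\mu,S}$, and tightness (resp.\ cyclic tightness) of $d$ as an extension of $d_S$. In both parts, any competing $d' \in {\cal D}_{\mu,V}$ decomposes as $d' = m + h$ with $m$ a directed metric on $V$ satisfying $m(st) \geq \mu(st)$ on $E_S$ and $h \in \RR_{+}^{E_V}$, so I may first argue with the metric summand $m$ and dispose of $h$ at the end.

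For (1), assume $d' \leq d$; then $m \leq d$, so $m|_{S} \in {\cal D}_{\mu,S}$ satisfies $m|_{S} \leq d_S$. Minimality of $d_S$ forces $m|_{S} = d_S$, making $m$ an extension of $d_S$ dominated by $d$, and tightness of $d$ then yields $m = d$. The sandwich $d \leq d + h = d' \leq d$ gives $h = 0$ and $d' = d$.

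For (2), assume $d'(C) \leq d(C)$ for every cycle $C$ in $V$. Restricting to cycles lying in $S$ and using $m|_{S} \in {\cal D}_{\mu,S}$, the ${\cal C}$-minimality of $d_S$ combined with (\ref{clm:congruent}) yields a potential $p: S \to \RR$ such that $m|_{S}(xy) = d_S(xy) + p(x) - p(y)$ for $x,y \in S$. The crucial technical step is to extend $p$ to $p_V: V \to \RR$ so that the shifted function $m''(xy) := m(xy) - p_V(x) + p_V(y)$ is non-negative on $E_V$; the triangle inequality for $m''$ and the identity $m''|_{S} = d_S$ are automatic from the shift. I plan to use the Lipschitz-style extension
\[
p_V(v) := \max_{s \in S}(p(s) - m(sv)) \quad (v \in V \setminus S), \qquad p_V|_{S} := p,
\]
whose admissibility on mixed pairs and on pairs in $V \setminus S$ reduces to the triangle inequality in $m$, and whose consistency on $S$ follows from the automatic bound $p(s) - p(x) \leq m|_{S}(sx)$ (equivalent to $d_S(sx) \geq 0$).

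Once $m''$ is constructed, cycle lengths are preserved and $m''(C) = m(C) \leq d(C)$ for every cycle $C$, so cyclic tightness of $d$ forces $m''(C) = d(C)$ throughout, whence $m'' \equiv d \mod {\cal L}$ by (\ref{clm:congruent}), and hence $m \equiv m'' \equiv d \mod {\cal L}$. Finally, applying $d'(C) = m(C) + h(C) = d(C)$ to each two-edge cycle $(x,y)$ forces $h(xy) + h(yx) = 0$, hence $h \equiv 0$, giving $d' \equiv d \mod {\cal L}$. The main obstacle is the Lipschitz extension step: one must verify simultaneously that $p_V$ keeps $m''$ non-negative on every ordered pair in $V$, including pairs entirely inside $V \setminus S$, for which the triangle inequality on $m$ is precisely what makes the $\max$ admissible.
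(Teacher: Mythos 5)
Your argument is correct, but for part (2) it takes a genuinely different route from the paper. The paper handles (1) exactly as you do (it is immediate from the definitions), but for (2) it argues combinatorially: it extends $\mu$ by zero to a distance $\bar\mu$ on $E_V$, takes an extremal class $[xy]$ of $d$, shows that this class must intersect $E_S$, and then invokes the characterization of ${\cal C}$-minimality in Lemma~\ref{lem:minimal}~(2) (extremal classes meeting cycles of $H_{\bar\mu,d}$), transferring a cycle from $H_{\mu,d_S}$ to $H_{\bar\mu,d}$. You instead work directly from the definitions: you restrict the competitor to $S$, use ${\cal C}$-minimality of $d_S$ together with (\ref{clm:congruent}) to produce a potential $p$ on $S$, and then renormalize the competitor by the McShane-type extension $p_V(v)=\max_{s\in S}(p(s)-m(sv))$ so that it becomes an honest metric extension of $d_S$ with unchanged cycle lengths, at which point cyclic tightness of $d$ applies verbatim and the nonnegative orthant part is killed on two-edge cycles. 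The delicate point you flag — nonnegativity of the shifted function on all ordered pairs, including pairs inside $V\setminus S$ — does go through: it is exactly the triangle inequality for $m$ (for mixed pairs and pairs outside $S$) plus $d_S\geq 0$ (for pairs inside $S$), so the extension is admissible. What the two approaches buy: the paper's proof reuses the already-established machinery of extremal classes and stays within the combinatorial framework used elsewhere in Section~4, while yours is self-contained apart from (\ref{clm:congruent}), avoids the auxiliary distance $\bar\mu$ and Lemma~\ref{lem:minimal} altogether, and isolates the conceptually clean step that ${\cal C}$-minimality can always be tested against competitors normalized by a potential to agree with $d_S$ on $S$.
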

\begin{proof}
(1) is obvious from definition. 
(2) is not so obvious.
We utilize Lemma~\ref{lem:minimal} by
extending $\mu$ to $\bar \mu: E_V \to \RR_{+}$
by $\bar \mu_S := \mu$ and $\bar \mu(xy) := 0$ 
for $xy \not \in E_S$. 
Take an extremal class $[xy]$ of $d$.
We show $[xy] \cap E_S \neq \emptyset$. 
If true,
then $[xy] \cap E_S$ is also an extremal class in $d_S$
and meets a cycle of $H_{\mu,d_S}$ by 
the ${\cal C}$-minimality of $d_S$ (Lemma~\ref{lem:minimal}~(2)); 
this cycle also belongs to $H_{\bar \mu,d}$.
Extend $d_S$ to $\bar d: E_V \to \RR_{+}$ 
by $\bar d_S := d_S$ and $\bar d(xy) := 0$ for $xy \not \in E_S$.
Since $d$ is a cyclically tight extension of $d_S$,
$d$ is ${\cal C}$-minimal in ${\cal D}_{\bar d, V}$.
By Lemma~\ref{lem:minimal}~(2), 
$[xy]$ meets a cycle $C$ in $H_{\bar d,d}$.
If this cycle belongs to $E_V \setminus E_S$, 
then by $d(uv) = \bar d(uv) = 0$ for $uv \in C$ 
the triangle equality we have $[x] = [y]$
and thus $[xy]$ is never extremal; a contradiction. 
Therefore $C$ meets distinct nodes in $S$.
By the triangle inequality and $\bar d = 0$ on $E_V \setminus E_S$
we may assume that $C$ includes a path $(u,x,y,v)$ 
with distinct $u,v \in S$.
In particular $d(uv) =  d(ux) = d(xy) = d(yv) = 0$.
Since $d(uv) = d(ux) + d(xy) + d(yv)$ and $xy$ is extremal,
$d(xu) = d(vy) = 0$, and thus $([x],[y])= ([u], [v])$. 
So $[xy] \cap E_S \neq \emptyset$.  
\end{proof}

Our final goal is the following:
\begin{Lem}\label{lem:dominant}
Let $\mu$ be a directed distance on $S$.
\begin{itemize}
\item[{\rm (1)}] If $\dim T_{\mu} \geq k$, 
then there is a minimal metric $d$ in ${\cal D}_{\mu,S}$ 
with $\dim T_{d} \geq k$.
\item[{\rm (2)}] If $\dim \bar Q_{\mu}^{slim} \geq 2$, 
then there is a ${\cal C}$-minimal metric $d$ 
in ${\cal D}_{\mu,S}$ with $\dim \bar Q_{d} \geq 2$.
\end{itemize}
\end{Lem}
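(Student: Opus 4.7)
The plan is to build the dominating metric $d$ by isometrically embedding into $S$ a suitable configuration drawn from the geometry of $T_{\mu}$ (respectively $\bar Q_{\mu}^{slim}$), and then invoking the characterization of tight extensions in (\ref{fact:extension}). For (1), I fix a $k$-dimensional face $F$ of $T_{\mu}$ and choose, for each terminal $s \in S$, a point $p_s \in (T_{\mu})_s$; I also pick a finite set $V_F \subseteq F$ affinely spanning $F$, so that the restriction of $D_{\infty}$ to $V_F$ realizes dimension $k$. Endow $V := S \sqcup V_F$ with the directed metric $\tilde d$ pulled back from $D_{\infty}$ on $\tilde V := \{p_s\}_{s \in S} \cup V_F \subseteq T_{\mu}$ via the bijection $s \mapsto p_s$, $v \mapsto v$, and set $d := \tilde d|_{S}$. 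By (\ref{clm:RsRt}) we automatically have $d \geq \mu$, and $d$ is a directed metric on $S$.

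Two verifications then complete the argument for (1). First, $\tilde d$ must be shown to be a tight extension of $d$: no coordinate $\tilde d(x,y)$ involving a point of $V_F$ can be decreased without breaking the triangle inequality. Granting this, Fact~(\ref{fact:extension})~(1) yields an isometric embedding $V \hookrightarrow T_d$ with $s \mapsto d_s$, and the image of $V_F$ carves out a $k$-dimensional face of $T_d$, proving $\dim T_d \geq k$. Second, minimality of $d$ in ${\cal D}_{\mu,S}$ must be verified via Lemma~\ref{lem:minimal}~(1): every extremal class of $d$ must meet $H_{\mu,d}$. The key resource is (\ref{fact:embedding})~(2), which gives $D_{\infty}((T_{\mu})_s,(T_{\mu})_t)=\mu(s,t)$; by arranging $(p_s,p_t)$ to realize this minimum whenever $(s,t)$ lies in an extremal class of $d$, the equality $d(s,t) = \mu(s,t)$ is forced, populating $H_{\mu,d}$ as required.

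For (2), the parallel construction takes place inside a balanced section $R \subseteq Q_{\mu}^{+}$ selected to meet a $2$-dimensional face of $\bar Q_{\mu}^{slim}$: pick $p_s \in R_s$ and form $V, \tilde d$ analogously. Substituting Fact~(\ref{fact:extension})~(2) for its (1)-counterpart and Lemma~\ref{lem:minimal}~(2) for Lemma~\ref{lem:minimal}~(1) yields a ${\cal C}$-minimal $d$ with $\dim \bar Q_d \geq 2$, provided that $\tilde d$ is shown to be a cyclically tight extension of $d$ and that its image in $Q_{d}^{+}$ is balanced, in accordance with Fact~(\ref{fact:extension})~(2).

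The main obstacle is the simultaneous realization of (a) the tightness (or cyclic tightness) of $\tilde d$ as an extension of $d$, and (b) the minimality (or ${\cal C}$-minimality) of $d$ itself. The two conditions pull in opposite directions: tightness of $\tilde d$ forbids slack at coordinate pairs mixing $V_F$ with $S$, while minimality of $d$ forbids slack at pairs inside $S$ that sit in extremal classes. I expect to handle this conflict by choosing the $p_s$'s iteratively using the retractions in (\ref{fact:nonexpansive}): start from an arbitrary configuration, push each $p_s$ through the nonexpansive retraction toward the boundary of $(T_{\mu})_s$ without leaving $T_{\mu}$, and track the evolving combinatorial face structure via the graphs $K(p)$ from (\ref{fact:technical}) until both conditions are attained. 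The dimension claim persists under this process because we only move points within $(T_{\mu})_s$ while keeping $V_F$ fixed inside the chosen face $F$.
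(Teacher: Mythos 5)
Your plan leaves unresolved exactly the point that makes this lemma hard, namely producing a metric $d$ that is \emph{simultaneously} minimal (resp.\ ${\cal C}$-minimal) in ${\cal D}_{\mu,S}$ and has a high-dimensional $T_d$ (resp.\ $\bar Q_d$). Concretely: (i) your $d$ is defined from a chosen family $\{p_s\}$, but the extremal classes of $d$ you must ``hit'' with equalities $d(st)=\mu(st)$ depend on $d$ itself, so the prescription ``arrange $(p_s,p_t)$ to realize $\mu(s,t)$ on extremal classes'' is circular; worse, a single point $p_s\in (T_\mu)_s$ must serve all partners $t$ at once, and when $\mu$ is not a metric the regions $(T_\mu)_s$ are genuinely higher-dimensional, so in general no choice of points realizes $D_\infty((T_\mu)_s,(T_\mu)_t)=\mu(s,t)$ for all the pairs you need simultaneously. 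The closing appeal to ``iterating the retractions of (\ref{fact:nonexpansive})'' is not an argument; nothing guarantees the process terminates with both properties, nor that the face $F$ survives. (ii) The tightness of $\tilde d$ over $d$ is asserted, not proved, and is not automatic: the pulled-back distances between $V_F$ and the $p_s$'s can have slack relative to the triangle inequalities of $d$ (which is larger than $\mu$), and if you instead pass to a tight extension below $\tilde d$ you lose control of the geometry of $V_F$. (iii) Even granting an isometric embedding of the \emph{finite} set $V_F$ into $T_d$ via (\ref{fact:extension}), the conclusion ``the image carves out a $k$-dimensional face of $T_d$'' does not follow from anything stated in the paper; face dimensions in $T_d$ are read off from the component structure of $K_d(p)$ as in (\ref{fact:technical})~(2),(2'), not from embedding finitely many points. (iv) For part (2) the difficulty is strictly greater: ${\cal C}$-minimality requires, by Lemma~\ref{lem:minimal}~(2), that every extremal class meet a \emph{cycle} of $H_{\mu,d}$, and your sketch says nothing about how to secure this, nor about balancedness of the embedded configuration required by (\ref{fact:extension})~(2).

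For contrast, the paper avoids your configuration-of-points construction entirely: it starts from a single point $p$ in a high-dimensional face, forms the ``star'' metric $d(st)=p(s^c)+p(t^r)$, and takes a minimal metric below it while preserving the equalities on $EK_\mu(p)$ (claim (\ref{clm:EK(p)})); the dimension of $T_d$ or $Q_d$ is then controlled through $K_d(p)=K_\mu(p)$ and (\ref{fact:technical})~(2),(2'). For (2) it then runs a case analysis on $X_p$ and on the digraph $H_\mu(p)$ (strong connectivity, chordless cycles, pairs of $2$-cycles, explicit perturbations modulo ${\cal L}$) to force ${\cal C}$-minimality, possibly after restricting to a subset $U\subseteq S$ and lifting back via an auxiliary claim. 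None of this machinery has a counterpart in your proposal, so as it stands the argument has a genuine gap rather than being an alternative proof.
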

%
Assuming the validity of this lemma, 
we complete the proof of Theorem~\ref{thm:unbounded}.
We only show (2) in this theorem; 
again (1) can be shown in a similar way.
By this lemma, 
we can take a ${\cal C}$-minimal metric $d$ in ${\cal D}_{\mu,S}$
with $\dim \bar Q_{d} \geq 2$.
Take a cyclically tight extension $d'$ of $d$ 
such that $d'$ contains $\beta \gamma_n$
as a submetric for sufficiently large $n$.
Since $d'$ is ${\cal C}$-minimal (Lemma~\ref{lem:extend}~(2)),
$d'$ is decomposed, in modulo ${\cal L}$, 
into
a convex combination of 
${\cal C}$-minimal metrics $d_1,d_2,\ldots,d_m$ 
such that each $d_i$ is an extreme point of 
${\cal D}'_{\mu,V}/ {\cal L}$.
Some $d_i + {\cal L}$ has no $1/k$-integral point, 
as in the metric case.

Let us start the proof.
For $p \in P_\mu$, 
let $X_p$ be the set of elements $s \in S$ 
with $p(s^c) + p(s^r) = 0$.
Our argument crucially relies on
the following claim:
\begin{myitem}
For $p \in P_\mu$, 
there is a minimal metric $d \in {\cal D}_{\mu,S}$ 
such that $p \in P_d$, 
and 
\[
\begin{array}{ll}
 p(s^c) + p(t^r) = d(st) & {\rm if}\ s^ct^r \in EK_{\mu}(p), \\
 p(s^c) + p(t^r) > d(st) & {\rm otherwise,}
\end{array}  \quad (s,t \in S \setminus X_p).
\]
\vspace{-0.7cm}\label{clm:EK(p)}
\end{myitem}\noindent 
\begin{proof}
Replacing $p$ by $p + \alpha (\11,-\11)$ for some $\alpha$ 
if necessary, 
we may assume that $p$ is nonnegative. 
Let $d$ be the distance on $S$ defined by
\[
d(st) = p(s^c) + p(t^r) \quad (s,t \in S).
\]
Then $d$ is a metric, more precisely, 
$d$ is realized by a subdivision of a star.
We try to decrease $d(st)$ 
for $s,t \in S \setminus X_p$ with $s^ct^r \not \in EK(p)$ 
with keeping the triangle inequality.
Since one of $p(s^c), p(s^r)$ 
and one of $p(t^c), p(t^r)$ are positive,  
there is no $u \in S \setminus \{s,t\}$ 
such that $d(us) + d(st) = d(ut)$ or $d(st) + d(tu) = d(su)$.
Let $d(st) \leftarrow d(st) - \epsilon$ 
for small $\epsilon > 0$ and $s,t \in S \setminus X_p$ 
with $s^ct^r \not \in EK(p)$; we remark $d(st) > \mu(st) \geq 0$. 
Then $d$ does not violate the triangle inequality.
So $d$ is not minimal, and 
we can take a minimal metric $d' \in {\cal D}_{\mu,S}$ with $d' \leq d$. 
\end{proof}

\paragraph{Proof of Lemma~\ref{lem:dominant}~(1).}
By (\ref{fact:technical})~(2), 
we can take a point $p \in T_{\mu}$ 
such that $K_{\mu}(p)$ has at least $k$ components  
having no $u \in S^{cr}$ with $p(u) = 0$.
Take a minimal metric $d$ in (\ref{clm:EK(p)}). 
Let $U :=S \setminus X_p$.
Consider the restrictions $\mu_U$ of $\mu$ to $U$ and $p_U$ 
of $p$ to $U^{cr}$.
Then $p_U$ belongs to $T_{\mu_U}$ and 
$K_{\mu_U}(p_U)$ has $k$ components.
By (\ref{fact:technical})~(2) we have $\dim T_{\mu_U} \geq k$. 
One can easily see that $T_{\mu_U}$ 
is the surjective image of the projection of $T_{\mu}$; 
necessarily $\dim T_{\mu} \geq \dim T_{\mu_U} \geq k$.

\paragraph{Proof of Lemma~\ref{lem:dominant}~(2).} 
Suppose that $\dim \bar Q_{\mu}^{slim} = k$ for $k \geq 2$.
We try to find a triple 
$(U,d,p)$ of $U \subseteq S$, 
a metric $d$ on $U$, 
and $p \in Q_{d}$ 
such that $d$ is ${\cal C}$-minimal in ${\cal D}_{\mu_U,U}$ 
and $K_d(p)$ has at least $3$ components, 
which implies Lemma~\ref{lem:dominant}~(2) 
by (\ref{fact:technical})~(2') and the following claim.
\begin{myitem}
For $U \subseteq S$, let $d$ be 
a ${\cal C}$-minimal metric in ${\cal D}_{\mu_U,U}$.
Then there is a ${\cal C}$-minimal metric $d^*$ 
in  ${\cal D}_{\mu,S}$
with $\dim \bar Q_{d^*} \geq \dim \bar Q_{d}$.
\end{myitem}\noindent \vspace{-0.5cm}
\begin{proof}
Extend $d$ to $d'$ in ${\cal D}_{\mu,S}$ with ${d'}_U = d$.
Then $d'$ may not be ${\cal C}$-minimal in 
${\cal D}_{\mu,S}$.
We can take a ${\cal C}$-minimal metric $d^*$ in ${\cal D}_{\mu,S}$
such that $d^*(C) \leq d(C)$ for all cycles $C$ in $S$.
Since $d$ 
is ${\cal C}$-minimal in ${\cal D}_{\mu_{U},{U}}$, 
we have $d \equiv {d^{*}}_U \mod {\cal L}$.
Then $Q_{d}$ is a translation of $Q_{{d^{*}}_U}$, and 
hence $\dim \bar Q_{d} = \dim \bar Q_{{d^*}_U}$.
Since every point $p \in Q_{{d^*}_U}$ 
can be extended to a point in $Q_{d^{*}}$, 
we have $\dim \bar Q_{d^{*}} \geq \dim \bar Q_{d}$.
\end{proof}
We can take $p \in Q_{\mu}^{slim}$ 
such that $p / \sim$ belongs 
to the interior of 
$k$-dimensional face in $\bar Q_{\mu}^{slim}$ for $k \geq 2$.
There are three cases:
\begin{itemize}
\item[(i)] $K_\mu(p)$ has at least $3$ components and 
         $X_p = \emptyset$.
\item[(ii)] $K_\mu(p)$ has at least $3$ components and 
$X_p \neq \emptyset$ ($p$ belongs to $Q_{\mu,X_p}$).
\item[(iii)] $K_\mu(p)$ has at least $4$ components 
one of which is a (complete bipartite) component 
of nodes ${X_p}^c \cup {X_p}^r$
($p$ belongs to the interior of $Q_{\mu,X_p}^{deg}$).
\end{itemize}
In the following, 
for a distance $g$ on $S$ and a point $p \in \RR^{S^{cr}}$,
we denote by $H_g(p)$ the directed graph on $S$ 
with $EH_g(p) := \{st \mid s^ct^r \in EK_g(p) \}$
(possibly including loops).

We first consider case (i). 
Suppose that $H_\mu(p)$ is strongly connected.
Since $X_p = \emptyset$, 
by (\ref{clm:EK(p)})
we can take a minimal 
metric $d \in {\cal D}_{\mu,S}$ and $K_{\mu}(p) = K_{d}(p)$.
So $K_d(p)$ also has at least $3$ 
components; $\dim \bar Q_{d} \geq 2$.
By definition and construction,
$EH_\mu(p) = EH_d(p) \subseteq EH_{\mu,d}$.
Therefore $H_{\mu,d}$ is also strongly connected,
which immediately implies the ${\cal C}$-minimality of $d$ 
by Lemma~\ref{lem:minimal}~(2).
Thus $(S,d,p)$ is a required triple.

Suppose that $H_\mu(p)$ is not strongly connected.
Take a chordless cycle $(x_1,x_2,\ldots,x_m)$ 
(without repeated nodes).
Since $X_p = \emptyset$, equivalently, 
$H_\mu(p)$ has no loop, 
the cycle length $m$ is at least two.
Suppose $m \geq 3$. 
Then let $U \leftarrow \{x_1,x_2,\ldots, x_m\}$, 
and $p \leftarrow p_U$ 
(the restriction of $p$ to $U^{cr}$).
Then $H_{\mu_U}(p)$ is a cycle of length $m$, 
and $K_{\mu_U}(p)$ is a matching of size $m$ 
(has $m$ components).
According to (\ref{clm:EK(p)}), 
we can take a minimal metric $d \in {\cal D}_{\mu_U,U}$ 
with $p \in Q_d$ and $K_{\mu_U}(p) = K_{d}(p)$.  
Thus $H_{\mu_U,d}$ is strongly connected, 
and $(U,d,p)$ is a required triple.

Suppose that there is 
no simple chordless cycle of length at least $3$ in $H_{\mu}(p)$.
Since each node has both entering and leaving edges,
we can take from $H_{\mu}(p)$
two disjoint 2-cycles $(x_1,x_2)$, $(y_1,y_2)$ 
without edges from $\{y_1,y_2\}$ to $\{x_1,x_2\}$.
Let $U \leftarrow \{x_1,x_2,y_1,y_2\}$, and $p \leftarrow p_U$.
Again we can take a minimal metric $d \in {\cal D}_{\mu_U,U}$ 
with $K_{\mu_U}(p) = K_{d}(p)$.
For small positive $\epsilon$,
decrease $p$ by $\epsilon$ on $\{y_1,y_2\}^c$ and 
increase $p$ by $\epsilon$ on $\{y_1,y_2\}^r$. 
Then $K_d(p)$ has four components 
consisting of 
four disjoint edges 
$x_1^c x_2^r, x_2^c x_1^r, y_1^c y_2^r,  y_2^c y_1^r$. 
Suppose 
that $d$ is not ${\cal C}$-minimal 
(otherwise $(U,d,p)$ is a required triple).
So we may assume 
that $H_{\mu_U, d}$ has an extremal edge 
from $\{x_1, x_2\}$ to $\{y_1,y_2\}$, 
and has no edge from $\{y_1, y_2\}$ to $\{x_1,x_2\}$. 
Then we can construct
a ${\cal C}$-minimal metric from $d$ by adding
\[
\begin{array}{|c|cccc|}
\hline
   & x_1 & x_2 & y_1 & y_2 \\
\hline
x_1 & 0  &  0  & \epsilon  & \epsilon  \\
x_2 & 0  &  0  & \epsilon  & \epsilon \\
y_1 & -\epsilon & -\epsilon & 0   & 0 \\
y_2 & -\epsilon & -\epsilon & 0 & 0  \\
\hline
\end{array}\quad \in {\cal L}
\] 
for $\epsilon > 0$ and 
decreasing some of $d(x_iy_j)$.
For keeping $p \in Q_d$, 
decrease $p$ by $\epsilon$ on $\{y_1,y_2\}^c$
and increase 
$p$ by $\epsilon$ on $\{y_1,y_2\}^r$. 
Then the resulting
$K_d(p)$ is a matching 
consisting of the four edges.
Thus $(U,d,p)$ is a required triple.

Next we consider case (iii).
Let $U \leftarrow S \setminus X_p$ and 
let $p \leftarrow p_U$.
Then $K_{\mu_{U}}(p)$ has no isolated node. 
Obviously, $K_{\mu_{U}}(p)$ has at least $3$ connected components, 
and $X_{p} = \emptyset$. 
Thus $p \in Q_{\mu_U}^{slim}$.
Therefore, the situation reduces to case (i).

Finally we consider case (ii).
We use the same projection idea.
Take $x \in X_p$.
Let $A^c \subseteq S^c \setminus {X_p}^c$ and 
$B^r \subseteq S^r \setminus {X_p}^r $
be the sets of nodes $u$ 
covered only by $x^r$ and $x^c$, respectively.
Suppose $A^c \cup B^r = \emptyset$.
Then 
let $U \leftarrow S \setminus x$, 
and let $p \leftarrow p_U$.
Then $X_p$ decreases, $p \in Q_{\mu_{U}}^{slim}$, and 
$K_{\mu_{U}}(p)$ has at least $3$ connected components.
Suppose that both $A^c$ and $B^r$ are nonempty.
For a small positive $\epsilon$,
decrease $p$ by $\epsilon$ on $A^c \cup B^r$
and increase $p$ by $\epsilon$ on $x^c,x^r$.
Then $X_p$ decreases, 
$p \in Q_{\mu}^{slim}$, and $K_\mu(p)$ 
has at least $5$ connected components.
Suppose that one of $A^c$ and $B^r$, say $B^r$, is empty.
Then $x^c$ is necessarily incident 
to $S^r \setminus x^r$; otherwise 
$p$ is a proper fat relative to $\{x\}$, a contradiction.
For a small positive $\epsilon$,
decrease $p$ by $\epsilon$ on $A^c$ and 
increase $p$ by $\epsilon$ on $x^r$.
Again $p \in Q_{\mu}^{slim}$ ($x^c$ is still covered), 
and $K_\mu(p)$ has at least $4$ connected components; 
Repeat it until $X_p = \emptyset$.
After that, the situation reduces to case (i).

\section*{Acknowledgements}
We thank the referee for careful reading and helpful comments.
The first author is supported by a Grant-in-Aid 
for Scientific Research 
from the Ministry of Education, Culture, Sports, Science 
and Technology of Japan. 
The second author is supported by Nanzan University 
Pache Research Subsidy I-A-2 for the 2008 academic year.

\end{document}